%%%%%%%%%%%%%%%%%%%%%%%%%%

%%%%%%%%%%%%%%%%%%%%%%%%
\documentclass{amsart}
\usepackage{amssymb}
\usepackage[mathscr]{eucal}
\usepackage{eufrak}
\usepackage{xspace}
\usepackage{mathtools}
\usepackage{color}
%\usepackage{refcheck}

%user defined commands
%\newcommand{\marginlabel}[1]
%{\mbox{}\marginpar{\raggedleft\hspace{0pt}\tiny{\textcolor{red}{#1}}}
%} 
%
\newcommand{\R}{\mathbb{R}}

\newcommand{\Z}{\mathbb{Z}}
\newcommand{\N}{\mathbb{N}}

\newcommand{\LL}{\mathcal{L}}

\renewcommand{\H}{\mathcal{H}}

\newcommand{\E}{{\mathcal E}}
\newcommand{\F}{{\mathcal F}}

\newcommand{\po}{\partial}

\newcommand{\wto}{\rightharpoonup}
\newcommand{\ve}{\varepsilon}

\newcommand{\la}{\langle}
\newcommand{\ra}{\rangle}

\newcommand{\loc}{{\text{\rm loc}}}
\newcommand{\X}{\times}

\renewcommand{\d}{\delta}
\renewcommand{\l}{\lambda}

\renewcommand{\a}{\alpha}
\renewcommand{\b}{\beta}
\renewcommand{\t}{\theta}
\newcommand{\s}{\sigma}
\newcommand{\g}{\gamma}
\newcommand{\z}{\zeta}
\renewcommand{\k}{\kappa}
\newcommand{\sgn}{\text{\rm sgn}}

\newcommand{\Om}{\Omega}
\newcommand{\om}{\omega}

\newcommand{\supp}{\text{\rm supp}\,}

\newcommand{\M}{{\mathcal M}}

\renewcommand{\E}{{\mathcal E}}

\renewcommand{\div}{\text{\rm div}\,}

\renewcommand{\supp}{\text{\rm supp}\,}

%%%%Daniel%%%%

\newcommand{\x}{\mathbf{x}}

%\newcommand{\z}{\mathbf{z}}

%\newcommand{\E}{\mathbf{E}}
%\newcommand{\V}{\mathbf{V}}

%\newcommand{\F}{\mathbf{F}}
%%%%%%%%%%%%%%%%%

\newcommand{\cO}{{\mathcal O}}
\newcommand{\cU}{{\mathcal U}}
\newcommand{\cV}{{\mathcal V}}
\newcommand{\cE}{{\mathcal E}}
\newcommand{\cL}{{\mathcal L}}
\newcommand{\cD}{{\mathcal D}}
\newcommand{\cI}{{\mathcal I}}

\newcommand{\esslim}{\operatorname{ess}\!\lim}

\newcommand{\Abf}{\mathbf{A}}
\newcommand{\abf}{\mathbf{a}}

\newcommand{\Lip}{\text{\rm Lip}}

\renewcommand{\S}{{\mathcal S}}

\newcommand{\Xal}{{\mathcal X}}

\newcommand{\bbE}{{\mathbb E}}

\newcommand{\bbP}{{\mathbb P}}

\newcommand{\lQ}{\big\la\!\!\big\la}
\newcommand{\rQ}{\big\ra\!\!\big\ra}

\newcommand{\e}{\epsilon}

\renewcommand{\Lip}{\text{Lip\,}}

\newcommand{\ul}{\underline}
\newcommand{\wh}{\widehat}
\newcommand{\wt}{\widetilde}
\newcommand{\ff}{\mathfrak{f}}

\newcommand{\uf}{\mathfrak{u}}

\newcommand{\qq}{\mathfrak{q}}
\newcommand{\gf}{\mathfrak{g}}
\newcommand{\FF}{\mathfrak{F}}
\newcommand{\vf}{\mathfrak{v}}
\newcommand{\rr}{\mathfrak{r}}
\newcommand{\cc}{\mathfrak{c}}

%theorems, corollaries, lemmas, and propositions,
%in the most emphatic (plain) style;
%all are numbered separately
%There is a Main Theorem in the most emphatic (plain)
%style, unnumbered
%There are definitions, in the less emphatic (definition) %style
%There are notations, in the least emphatic (remarks) style,
%unnumbered

\theoremstyle{plain}
\newtheorem{theorem}{Theorem}[section]

\newtheorem{lemma}{Lemma}[section]
\newtheorem{proposition}{Proposition}[section]
\theoremstyle{definition}
\newtheorem{definition}{Definition}[section]
\theoremstyle{remark}

\numberwithin{equation}{section}
%Typset with custart format

\begin{document}

%Topmatter
\title[Neumann Problem for Stochastic Conservation Laws] 
{The Strong Trace Property and \\ the Neumann Problem for Stochastic Conservation Laws }
\author[H.~Frid]{Hermano Frid}
\thanks{H.~Frid gratefully acknowledges the support from CNPq, through grant proc.\ 305963/ 2014-7, and FAPERJ, through grant proc.\ E-26/103.019/2011.}

\address{Instituto de Matem\'atica Pura e Aplicada - IMPA\\
         Estrada Dona Castorina, 110 \\
         Rio de Janeiro, RJ 22460-320, Brazil}
\email{hermano@impa.br}

\author[Y.~Li]{Yachun Li}
\thanks{Y.~Li gratefully acknowledges the support from NSF of China, through grant 11831011 and 11571232.}

\address{School of Mathematical Sciences, MOE-LSC, and SHL-MAC,  Shanghai Jiao Tong University\\ Shanghai 200240, P.R.~China}

\email{ycli@sjtu.edu.cn}

\author[D.~Marroquin]{Daniel Marroquin}
\thanks{D.~Marroquin thankfully acknowledges the support from CNPq, through grant proc. 150118/2018-0.}

\address{Instituto de Matem\'{a}tica - Universidade Federal do Rio de Janeiro\\
Cidade Universit\'{a}ria, 21945-970, Rio de Janeiro, Brazil}
\email{marroquin@im.ufrj.br}

\author[J.~Nariyoshi]{Jo\~ao F.C.~Nariyoshi}
\thanks{J.F.C.~Nariyoshi appreciatively acknowledges the support from CNPq, through grant proc. 140600/2017-5.}

\address{Instituto de Matem\'atica Pura e Aplicada - IMPA\\
         Estrada Dona Castorina, 110 \\
         Rio de Janeiro, RJ 22460-320, Brazil}
\email{jfcn@impa.br}

\author[Z.~Zeng]{Zirong Zeng}
\thanks{Z.~Zieng gratefully acknowledges the support from NSF of China, through grant 11831011 and 11571232.}

\address{School of Mathematical Sciences, Shanghai Jiao Tong University\\ Shanghai 200240, P.R.~China}

\email{beckzzr@sjtu.edu.cn}

\keywords{stochastic partial differential equation, scalar conservation law, Neumann problem}
\subjclass[2010]{Primary: 60H15, 35R60, 35L65; Secondary: 26B20, 35F31, 35B51}
\date{}
\thanks{}

\begin{abstract}  We establish the well-posedness of the Neumann problem for stochastic conservation laws with multiplicative noise. As a major step  
for establishing the uniqueness of the kinetic solution to the referred problem we establish the new  strong trace property  for stochastic conservation laws.   
Existence of kinetic solutions is proved through the vanishing viscosity method and the detailed analysis of the corresponding stochastic parabolic problem 
is also made here for the first time, as far as the authors know.

\end{abstract}

\maketitle

\section{Introduction}\label{S:1}
We consider the following initial boundary value problem for a stochastic conservation law, on a bounded smooth domain $\cO\subset \R^d$,
\begin{align}
& du+ \nabla\cdot \Abf(u)\,dt = \Phi(u)\, dW, \qquad (t,x)\in (0,\infty)\X\cO, \label{e1.1}\\
& u(0,x)= u_0(x),\qquad x\in \cO, \label{e1.2}\\
& \Abf(u(t,x))\cdot\nu(x) =0,\qquad (t,x)\in(0,\infty)\X\po\cO.  \label{e1.3}
\end{align}
 Here $\Abf\in C^3(\R;\R^d)$ is the flux function and $\nu$ is the normal vector to $\po\cO$. Let  $(\Om,\F, \bbP, (\F_t))$  be a stochastic basis, where $(\Om, \F, \bbP)$ is a probability space and $(\F_t)$ is a complete filtration.
 We use the same framework as in \cite{DV}.
 We assume that $W$ is a cylindrical Wiener process: $W=\sum_{k\ge1}\b_k e_k$, where $\b_k$ are independent Brownian processes and $(e_k)_{k\ge1}$ is a complete orthonormal basis in a Hilbert space ${\frak U}$.
 For each $u\in L^2(\cO)$, $\Phi(u): {\frak U}\to L^2(\cO)$ is defined by $\Phi(u)e_k=g_k(\cdot,u(\cdot))$, where $g_k(\cdot,u(\cdot))$ is a regular function on $\cO$. More specifically, we assume that,  for some $M>0$,  $g_k\in C_c(\bar \cO \X(-M, M))$, with the bounds
 \begin{equation}\label{e1.4*}
 |g_k(x,0)|+|\nabla_xg_k(x,\xi)|+|\po_\xi g_k(x,\xi)|\le \a_k,\qquad \forall x\in \cO,\ \xi\in\R,
 \end{equation}
 where $(\a_k)_{k\ge 1}$ is a sequence of positive numbers satisfying $D:=4\sum_{k\ge1}\a_k^2<\infty$.
 Observe that  \eqref{e1.4*} implies
 \begin{align}
 &G^2(x,u)=\sum_{k\ge1}|g_k(x,u)|^2\le D(1+|u|^2),\label{e1.4}\\
 &\sum_{k\ge1}|g_k(x,u)-g_k(y,v)|^2\le D(|x-y|^2+|u-v|^2),\label{e1.5}
 \end{align}
for all  $x,y\in\cO$, $u,v\in\R$.

The conditions on $\Phi$ imply that $\Phi: L^2(\cO)\to L_2({\frak U}; L^2(\cO))$,  where the latter denotes the space of Hilbert-Schmidt operators from ${\frak U}$ to $L^2(\cO)$. In particular, given a predictable process $u\in L^2(\Om\X[0,T]; L^2(\cO))$, the stochastic integral is a well defined process taking values in $L^2(\cO)$. Indeed, for $u\in L^2(\cO)$, from \eqref{e1.4}, it follows 
$$
\sum_{k\ge1}\|g_k(\cdot,u(\cdot)\|_{L^2(\cO)}^2\le D(1+\|u\|_{L^2(\cO)}^2).
$$

Since, clearly, the series defining $W$ does not converge in ${\frak U}$, in order to have $W$ properly defined as a Hilbert space valued  Wiener process,  one usually introduces an auxiliary space ${\frak U}_0\supset {\frak U}$, such as
$$
{\frak U_0}=\{ v=\sum_{k\ge1} a_k e_k\,:\, \sum_{k\ge 1}\frac{a_k^2}{k^2}<\infty\},
$$
 endowed with the norm
 $$
 \|v\|_{{\frak U}_0}^2=\sum_{k\ge 1}\frac{a_k^2}{k^2},\qquad v=\sum_{k\ge 1} a_k e_k.
 $$
 In this way, one may check that the trajectories of $W$ are $\bbP$-a.s.\ in $C([0,T], {\frak U}_0)$ (see \cite{DPZ}).

For simplicity, we will assume that $u_0$ is independent of $\om\in\Om$, that is, $u_0\in L^\infty(\cO)$.
More precisely, we assume that $u_0\in L^\infty(\cO)$, and  there exists an interval $[a,b]$, with $(-M,M)\subset [a,b]$ , such that
\begin{equation}\label{e1.5'}
a\le u_0(x)\le b, \ a.e.\ x\in\cO.
\end{equation}
Note that, in this setting, we can assume without loss of generality that the $\sigma$-algebra $\mathcal{F}$ is countably generated and $(\mathcal{F}_t )_{t\geq 0}$ is the filtration generated by the Wiener process.

 We also assume that
\begin{equation}\label{e1.6}
\Abf(a)=\Abf(b)=0.
\end{equation}
The extension to the case where $u_0\in L^\infty(\Omega,\mathcal{F}_0,\mathbb{P};L^\infty(\cO))$ is straightforward and comes down to taking expectation wherever an integral involving $u_0$ is present.

We also need to impose a non-degeneracy condition on the symbol ({\em cf.} \cite{GH})
$$
\LL(i\tau, i\k, \xi):= i(\tau+\abf(\xi)\cdot \k),
$$
$\tau\in\R$, $\k\in\R^d$, and $\abf(\xi)=\Abf'(\xi)$. For $\k=(\k_1,\cdots,\k_d)\in\R^d$, let $|\k|^2=\k_1^2+\k_2^2+\cdots+\k_d^2$.  We suppose there exist $\a\in(0,1)$ such that
\begin{equation}\label{e1.7}
\sup_{\tiny{\begin{matrix} \tau\in\R, \k\in\R^d\\ |\k|=1\end{matrix}}} |\Om_{\LL}(\tau,\k,\d)|\lesssim \d^\a,
%\begin{aligned}
%\om_{\cL}(\d) \lesssim \d^\a,\\
%\sup_{\tiny{\begin{matrix}\tau\in\R,n\in\Z^d\\|n|\sim J \end{matrix}}}\sup_{\xi\in [a,b]}|\cL_\xi(i\tau,in;\xi)|&\lesssim J^\b,\qquad \forall \d>0,\, J\gtrsim 1,
%\end{aligned}
\end{equation}
for some $L_0>0$, such that $[a,b]\subset (-L_0,L_0)$, where, for any $\d>0$,
\begin{align*}
\Om_{\LL}(\tau,\k;\d)&:=\{\xi\in (-L_0,L_0) \,:\, |\LL(i\tau,i\k,\xi)|\le \d\}.%\\
%\om_{\LL}(\d)&:=\sup_{\tiny{\begin{matrix} \tau\in\R, n\in\Z^d\\ |n|=1\end{matrix}}} |\Om_{\LL}(\tau,n,\d)|.
\end{align*}
Here we employ the usual notation $x\lesssim y$, if $x\le Cy$, for some absolute constant $C>0$, and $x\sim y$, if $x\lesssim y$ and $y\lesssim x$.

Since we expect solutions of \eqref{e1.1}-\eqref{e1.3} to be bounded from above and from below by $b$ and $a$, respectively (see Theorem \ref{T:4.3}), we only need to impose the nondegeneracy assumptions \eqref{e1.7} for $\xi$ in an open interval containing $[a,b]$.

Examples of flux functions $\Abf(u)$ satisfying \eqref{e1.6} and \eqref{e1.7} are given by ({\em cf.} \cite{TT})
$$
\Abf(u)=\left(\frac1{(l_1+1)}(u-a)^{l_1+1}(u-b)^{l_1+1},\cdots,\frac1{(l_d+1)}(u-a)^{l_d+1}(u-b)^{l_d+1}\right),
$$
where, $l_i\in\N$,  $l_i\ne l_j$, if $i\ne j$, $i,j=1,\cdots,d$,  as it is not difficult to check. Note that, in this case,  for each $\k\in\R^d$ with $|\k|=1$, the  $\sup$ of  $|\Om_{\LL}(\tau,\k,\d)|$,
for $\tau\in\R$,  will be assumed when $-\tau\pm\d$ is a critical value  of $a(\xi)\cdot\k$. Moreover, if $l_{i_0}=\max\{l_1,\cdots,l_d\}$, then it is not difficult to see that the $\sup$ of  $|\Om_{\LL}(\tau,\k,\d)|$, for $\tau\in\R$ and $\k\in\R^d$, $|\k|=1$,  will be assumed for $\k=e_{i_0}$,
the $i_0$-th element of the canonical basis, and it is achieved for $-\tau\pm\d$ running along the  local extremes  of $a_{i_0}(\xi)$, in the interval $[a,b]$, with $+$ or $-$ depending on whether it is a maximum or a minimum, respectively,   and so, condition \eqref{e1.7} is satisfied for $\a=1/l_{i_0}$.

Evidently, \eqref{e1.7} implies the following weaker condition: For $(\tau,\k)\in\R^{d+1}$, $(\tau,\k)\ne0$,
\begin{equation}\label{e1.8}
\LL^1\{\xi\in [-L_0,L_0]\,:\, \tau+\abf(\xi)\cdot\k=0\}=0,
\end{equation}
for some  $L_0>0$, such that  $[a,b]\subset (-L_0,L_0)$, where $\LL^1$ denotes the one-dimensional Lebesgue measure.

\subsection{Definitions and Main Theorem} \label{SS:1.1}

\begin{definition}[Kinetic measure]\label{D:2.1} As in \cite{DV}, we call a map $m$ from $\Om$ to the set of non-negative finite measures over $\cO\X[0,T]\X\R$ a kinetic measure if
\begin{enumerate}
\item $m$ is measurable, in the sense that for each $\phi\in C_b(\cO\X[0,T]\X\R)$, $\la m,\phi\ra:\Om\to\R$ is measurable;
\item $m$ vanishes for large $\xi$, that is, if $B_R^c=\{\xi\in\R\,:\, |\xi|\ge R\}$, then
\begin{equation}
\lim_{R\to\infty} \bbE\, m([0,T]\X\cO\X B_R^c)=0;\label{e2.1}
\end{equation}
\item  for all $\phi\in C_b(\cO\X\R)$, the process
$$
t\mapsto \int_{[0,t]\X\cO\X\R}\phi(x,\xi)\,dm(s,x,\xi)
$$
is predictable.
\end{enumerate}
\end{definition}

\begin{definition} [Kinetic solution] \label{D:2.2} Let $u_0\in L^\infty(\cO)$. A measurable function $u:\Om\X[0,T]\X\cO \to\R$ is said to be a kinetic solution to \eqref{e1.1}--\eqref{e1.3}, if
$(u(t))$ is predictable, $u\in L^\infty(\Om\X[0,T]\X\cO)$ and  there exists a kinetic measure $m$ such that $f:=1_{u>\xi}$ satisfies: for all $\varphi\in C_c^1([0,T)\X\cO\X\R)$,
\begin{multline}\label{e2.2}
\int_0^T\la f(t),\po_t\varphi(t)\ra\,dt +\la f_0,\varphi(0)\ra+\int_0^T\la f(t),\abf(\xi)\cdot\nabla \varphi(t)\ra\,dt\\
=-\sum_{k\ge1} \int_0^T\int_{\cO} g_k(x,u(t,x)) \varphi(t,x,u(t,x))\,dx\,d\b_k(t)\\
-\frac12\int_0^T\int_{\cO}\po_\xi\varphi(t,x,u(t,x)) G^2(x,u(t,x))\,dx\,dt+m(\po_\xi\varphi),
\end{multline}
a.s., where $f_0=1_{u_0(x)>\xi}$, $G^2:=\sum_{k=1}^\infty|g_k|^2$ and $\abf(\xi):=\Abf'(\xi)$. Concerning the Neumann condition \eqref{e1.3}, we ask that for all $\psi\in C_c^\infty ((0,T)\X\R^d)$ we have a.s.\
\begin{equation}\label{e2.3}
 \int_0^T\int_{\cO} u\psi_t+\Abf(u)\cdot\nabla\psi \,dx\,dt +\sum_{k\ge1} \int_0^T\int_{\cO} g_k(x,u(t,x))\psi(t,x)\,dx\,d\b_k(t)=0.
 \end{equation}
 \end{definition}

 We now state the main result of this paper.

 \begin{theorem}\label{T:1.1} Let $u_0\in L^\infty(\cO)$ satisfying  $a\le u_0(x)\le b$ a.e.\ in $\cO$. Assume that conditions \eqref{e1.4*}--\eqref{e1.7}  are  satisfied.
 Then there is a unique kinetic solution to \eqref{e1.1}--\eqref{e1.3}.
 \end{theorem}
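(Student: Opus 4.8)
The plan is to establish existence and uniqueness separately, with uniqueness resting on the strong trace property announced in the abstract. For \emph{existence}, I would use the vanishing viscosity method: solve the regularized stochastic parabolic problem $du^\e + \nabla\cdot\Abf(u^\e)\,dt = \e\Delta u^\e\,dt + \Phi(u^\e)\,dW$ on $\cO$, with initial datum $u_0$ and the homogeneous Neumann condition $\bigl(\Abf(u^\e)-\e\nabla u^\e\bigr)\cdot\nu = 0$ on $\po\cO$ (which is the natural parabolic regularization of \eqref{e1.3}). One establishes well-posedness of this parabolic problem by a standard fixed-point/Galerkin argument, together with the uniform bounds $a\le u^\e\le b$ a.s.\ (using \eqref{e1.6} so that the constants $a,b$ are stationary sub/supersolutions compatible with the boundary flux), and uniform in $\e$ estimates on $\sqrt{\e}\,\nabla u^\e$ in $L^2(\Om\X[0,T]\X\cO)$ coming from the Itô formula applied to $\tfrac12\|u^\e\|_{L^2}^2$ together with \eqref{e1.4}. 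Then one derives the kinetic formulation satisfied by $f^\e = 1_{u^\e > \xi}$, in which the parabolic term produces a non-negative kinetic measure $m^\e$; passing to the limit (using the non-degeneracy \eqref{e1.7}, or at least its weaker consequence \eqref{e1.8}, to get strong compactness of $u^\e$ via velocity averaging, in the spirit of the stochastic averaging lemmas of Debussche--Vovelle/Gess--Hofmanová) yields a kinetic solution $u$ together with a limiting kinetic measure $m$, and one checks that the boundary relation \eqref{e2.3} survives in the limit.

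**Uniqueness.** For uniqueness I would run the doubling-of-variables argument at the kinetic level, comparing two kinetic solutions $u_1, u_2$ with kinetic functions $f_1 = 1_{u_1>\xi}$, $\bar f_2 = 1_{u_2\le\xi}$. The Itô product formula applied to $\la f_1(t), \bar f_2(t)\ra$ produces, besides the usual terms that combine favorably, the cross term from the two stochastic integrals, which is controlled by \eqref{e1.5} (exactly as in the Cauchy problem), the transport term $\int f_1\bar f_2\, \abf(\xi)\cdot(\nabla_x+\nabla_y)\vphi$, the two kinetic-measure terms $m_1(\po_\xi\vphi),\, m_2(\po_\xi\vphi)$ (which contribute with the correct sign), and — crucially — \emph{boundary terms} coming from integrations by parts in $x$ and $y$ over $\cO$. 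These boundary terms are of the form $\int_{\po\cO}\int \Abf(u_i)\cdot\nu\, (\cdots)\,d\mathcal{H}^{d-1}$; they do not vanish from the weak formulation \eqref{e2.3} alone, because \eqref{e2.3} only encodes the normal flux tested against functions depending on $(t,x)$, not against functions of the kinetic variable $\xi$. This is where the strong trace property enters: one must know that $u_i$ possesses a strong trace $u_i^\tau$ on $\po\cO$ (in the sense of $L^1_{\loc}$ essential limits along the inner normal), so that the boundary integrand makes classical sense, and then that this trace satisfies the correct boundary kinetic condition forcing $\Abf(u_i^\tau)\cdot\nu = 0$ in the appropriate entropy sense, so that the boundary terms drop out (or have a sign). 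After they are eliminated, the remaining inequality integrates to $\bbE\int_{\cO}\int_{\R} f_1(t)\bar f_2(t)\,d\xi\,dx \le \bbE\int_{\cO}\int_{\R} f_1(0)\bar f_2(0)\,d\xi\,dx$, and since $f_1(0)=\bar f_2(0)$ corresponds to the same datum $u_0$, the right-hand side vanishes; symmetrizing gives $u_1=u_2$ a.e., a.s.

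**Main obstacle.** The principal difficulty, and the conceptual heart of the paper, is the strong trace property for the stochastic conservation law near $\po\cO$. In the deterministic case this is a theorem of Vasseur / Panov type, proved by blow-up near the boundary combined with kinetic/velocity-averaging compactness and the non-degeneracy \eqref{e1.7} of $\LL$. Transplanting this to the stochastic setting is delicate: after the parabolic blow-up rescaling near a boundary point, the noise term must be shown to be negligible at the relevant scale (a scaling/martingale estimate using \eqref{e1.4*}), and the averaging lemma has to be applied pathwise or in an averaged sense while keeping track of the kinetic measure $m$, whose behavior near the boundary is not a priori controlled. I expect the bulk of the work to be: (i) deriving a localized kinetic formulation valid up to the boundary; (ii) the rescaling argument showing that, in the limit, one obtains a solution of the limiting kinetic equation on a half-space that is independent of the normal variable, whence by non-degeneracy it is constant in that variable, yielding the trace; and (iii) identifying the boundary condition satisfied by the trace and showing it makes the doubling-of-variables boundary terms vanish. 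A secondary but nontrivial obstacle is the rigorous treatment of the stochastic parabolic Neumann problem — uniform bounds, the trace of $u^\e$, and the stability of the boundary flux condition under $\e\to0$ — which, as the abstract notes, does not appear to have been carried out before in this generality.
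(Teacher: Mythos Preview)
Your proposal is correct and follows essentially the same architecture as the paper: strong trace via blow-up and averaging (Section~\ref{S:2}), doubling of variables leading to a comparison principle in which the boundary contribution is eliminated by the strong trace together with the Neumann relation~\eqref{e2.3} (Section~\ref{S:3}), and existence by vanishing viscosity with Gess--Hofmanov\'a spatial regularity plus the Prokhorov--Skorokhod--Gy\"ongy--Krylov compactness scheme (Sections~\ref{S:4}--\ref{S:5}). One organizational refinement worth noting: in the paper the doubling step (Proposition~\ref{P:4.1} and Theorem~\ref{T:4.1}) is carried out with $\phi\in C_c^\infty(\cO)$, so no boundary terms appear there; the boundary issue surfaces only afterward, when one sends $\phi\to 1$ in the Kato--Kruzhkov inequality~\eqref{e4.2} via a boundary-layer cutoff, and it is that single flux term that the strong trace handles.
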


 The proof of Theorem~\ref{T:1.1} is  given along the remaining sections.

 Before we pass to a description of earlier works and an overview of the paper, we state the definition of weak entropy solution and the equivalence between this concept and the one of kinetic solution.

  \begin{definition} [Weak entropy solution] \label{D:2.3} Let $u_0\in L^\infty(\cO)$. A bounded measurable function  $u\in L^\infty( \Om\X [0,T]\X\cO)$ is said to be a weak entropy solution to \eqref{e1.1}--\eqref{e1.3} if $(u(t))$ is an adapted $L^2(\cO)$-valued process,
  and for all convex $\eta\in C^2(\R)$, for all non-negative $\varphi\in C_c^1([0,T)\X\cO)$,
  \begin{multline}\label{e2.4}
 \int_0^T\la\eta(u),\po_t\varphi\ra\,dt+\la\eta(u_0),\varphi(0)\ra+\int_0^T\la q(u),\nabla\varphi\ra\,dt\\
 \ge-\sum_{k\ge1}\int_0^T\la g_k(\cdot,u(t))\eta'(u(t)), \varphi\ra\,d\b_k(t)-\frac12\int_0^T\la G^2(\cdot,u(t))\eta''(u(t)),\varphi\ra\,dt,
   \end{multline}
   a.s.\ where $q(u)=\int_0^u\abf(\xi)\eta'(\xi)\,d\xi$ and $\la\cdot,\cdot\ra$ represents the inner product of $L^2(\cO)$.
   Also, $u$ must satisfy \eqref{e2.3}.
   \end{definition}

The following proposition is proven exactly as proposition~15 of \cite{DV} with minor adaptations, and we refer to the latter for its proof.

\begin{proposition}  \label{P:2.1} Let $u_0\in L^\infty(\cO)$ . For a measurable function $u:\Om\X[0,T]\X\cO\to\R$ it is equivalent to be a kinetic solution of \eqref{e1.1}--\eqref{e1.3} and a weak entropy solution of \eqref{e1.1}--\eqref{e1.3}.
\end{proposition}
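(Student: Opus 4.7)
My plan is to follow the proof of Proposition~15 in \cite{DV} essentially verbatim; the only novelty introduced by the Neumann problem is that all test functions in Definitions \ref{D:2.2} and \ref{D:2.3} are compactly supported in $\cO$, so the boundary $\po\cO$ plays no role in the equivalence. Moreover, the weak trace condition \eqref{e2.3} appears identically in both definitions, so the task reduces to showing that the kinetic identity \eqref{e2.2} for $f = 1_{u>\xi}$, with some kinetic measure $m$, is equivalent to the family of entropy inequalities \eqref{e2.4}, for all convex $\eta \in C^2(\R)$ and all nonnegative $\varphi \in C_c^1([0,T)\X\cO)$.

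For the direction kinetic $\Rightarrow$ weak entropy, fix such $\eta$ and $\varphi$ and test \eqref{e2.2} with $\Psi(t,x,\xi) = \varphi(t,x)\,\eta'(\xi)\,\chi_N(\xi)$, where $\chi_N \in C_c^\infty(\R)$ equals $1$ on an interval large enough to contain the essential range of $u$. Thanks to the $L^\infty$ bound on $u$ and \eqref{e2.1}, the cutoff factors $\chi_N$ and $\chi_N'$ evaluated against $f$ and $m$ either equal $1$ or vanish in the limit $N \to \infty$, so each term of \eqref{e2.2} reduces to its counterpart in \eqref{e2.4}, up to an additive constant (e.g.~$\eta(-N)$) that vanishes after differentiation by $\po_t$ and $\nabla$. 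The inequality in \eqref{e2.4} appears because the kinetic-measure contribution
$$
m(\po_\xi\Psi) \longrightarrow \int_{[0,T]\X\cO\X\R} \eta''(\xi)\,\varphi(t,x)\,dm(t,x,\xi) \ge 0,
$$
by the convexity of $\eta$ and the positivity of $m$, and is absorbed on the appropriate side.

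For the converse, given a weak entropy solution $u$, the defect in \eqref{e2.4} defines, for each smooth convex $\eta$ and each nonnegative $\varphi \in C_c^1([0,T)\X\cO)$, a nonnegative quantity $\mu_\eta(\varphi)$ which is linear in $\varphi$ and, by considering differences $\eta - \tilde\eta$, depends on $\eta$ only through $\eta'' \ge 0$. Specializing $\eta$ to smooth convex approximations of $\xi \mapsto (\xi - \xi_0)^+$ and varying $\xi_0$, a standard Riesz-representation argument (identical to the one in \cite[Prop.~15]{DV}) produces a nonnegative finite measure $m$ on $\cO \X [0,T] \X \R$ such that $\mu_\eta(\varphi) = \int \eta''(\xi)\,\varphi(t,x)\,dm(t,x,\xi)$; the $L^\infty$ bound on $u$ yields the decay \eqref{e2.1} for free, while the predictability property in Definition \ref{D:2.1}(3) is inherited from the predictability of the It\^o integrals in \eqref{e2.4}. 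Reversing the computation of the previous paragraph, and extending by density from product-form test functions $\Psi = \varphi(t,x)\eta'(\xi)\chi_N(\xi)$ to arbitrary $\Psi \in C_c^1([0,T)\X\cO\X\R)$, then produces \eqref{e2.2}. The main obstacle is precisely this construction of $m$ as a genuine nonnegative Radon measure with the required predictability; both points are handled by the arguments of \cite[Prop.~15]{DV} without modification, since the presence of $\po\cO$ is fully decoupled by the support condition on $\varphi$.
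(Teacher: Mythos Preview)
Your proposal is correct and matches the paper's approach exactly: the paper itself gives no proof and simply refers to Proposition~15 of \cite{DV}, noting that only minor adaptations are needed. Your observation that the boundary plays no role because all test functions in \eqref{e2.2} and \eqref{e2.4} are compactly supported in $\cO$, and that the Neumann condition \eqref{e2.3} is common to both definitions, is precisely the ``minor adaptation'' the authors have in mind.
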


\medskip

\subsection{Earlier works and overview of the paper}\label{SS:1.2}
In the deterministic case, that is, in the absence of the stochastic term $\Phi(u)\,dW$, the system \eqref{e1.1}--\eqref{e1.3} is a well-known model for many natural phenomena, such as the sedimentation of suspensions in closed vessels, the dispersal of a single species of animals in a finite territory, etc. (see, e.g., \cite{BFK} and the references therein). One may thus introduce such a random perturbation to take into account uncertainties and fluctuations arising in these applications.

The deterministic counterpart of \eqref{e1.1}--\eqref{e1.3} has long been addressed. First, Karlsen, Lie and Risebro \cite{KLR} constructed a weak solution to \eqref{e1.1}--\eqref{e1.3} in one spatial dimension via the front-tracking method, whose uniqueness was established only in the class of solutions obtained by the front-tracking approximations. Later on, B\"urger, Frid and Karlsen \cite{BFK} adopting a natural definition of entropy solution showed the existence and uniqueness of such solutions in arbitrary space dimensions.  A decisive tool for the proof in \cite{BFK} of the uniqueness of the solution was the strong trace property by Vasseur \cite{Va}. See also \cite{AS} and  \cite{FL} for related generalized problems.

On the other hand, stochastic conservation laws have a recent yet intense history. For the sake of examples, we mention Kim \cite{Kim} for the first result of existence and uniqueness of entropy solutions of the Cauchy problem for a one-dimensional stochastic conservation law, in the additive case, that is, $\Phi$ does not depend on $u$. Feng and Nualart \cite{FN}, where a notion of strong entropy solution is introduced, which is more restrictive than that of entropy solution, and  for which the uniqueness is established in the class of entropy solutions in any space dimension, in the multiplicative case, i.e., $\Phi$ depending on $u$; existence of such strong entropy solutions is proven only in the one-dimensional case.  Chen, Ding and Karlsen \cite{CDK}, where the result in \cite{FN} was improved and existence in any dimension was proven in the context of the functions of bounded variation.    Debussche and Vovelle in  \cite{DV}, where a major step  in the development of this theory was made with the extension of the concept of kinetic solution, originally introduced by Lions, Perthame and Tadmor in \cite{LPT},  for deterministic conservation laws,  to the context of stochastic conservation laws, for which the  well-posedness of the Cauchy problem was established in the  periodic setting in any space dimension.  Bauzet, Vallet and Wittbold \cite{BVW}, where the existence and uniqueness of entropy solutions for the general Cauchy problem  was proved in any space dimension (see also,   \cite{KSt}). Concerning boundary value problems,  Vallet and  Wittbold \cite{VW}, in the additive case,  and Bauzet, Vallet and Wittbold \cite{BVW1}, in the multiplicative case,  obtain existence and uniqueness of entropy solutions to  the homogeneous Dirichlet  problem, i.e., null boundary condition. See also \cite{KoN} where the notion of renormalized kinetic solution is introduced to provide the well-posedness of the non-homogeneous Dirichlet problem. Finally, we mention that  the methods and results introduced in \cite{DV}  were later extended to degenerated parabolic problems by  Debussche, Hofmanov\'a and Vovelle \cite{DHV} and Gess and Hofmanov\'a \cite{GH}.

The stochastic Neumann problem \eqref{e1.1}--\eqref{e1.3} is  investigated here for the first time. In the present work, we adopt a definition of  kinetic solution to this initial-boundary value problem, which is a natural extension of the one introduced in \cite{DV} in the periodic case. For the proof of the uniqueness of the kinetic solution  the decisive tool is again the strong trace property, this time for stochastic conservation laws, which is established in this paper for the first time and is a significant extension of
 Vasseur's result in \cite{Va}. 
  On the other hand, the existence of solutions is here addressed through the well known vanishing viscosity method for whose convergence we apply a compactness argument combining Prohorov theorem, Skorokhod representation theorem and the criterion for convergence in probability by Gy\"ongy and Krylov \cite{GK}. The same compactness argument was used before in \cite{Ha} and \cite{DHV} (see also references therein).  A fundamental point in our application of the just mentioned compactness argument is the space regularity in a fractional Sobolev space, uniformly with respect to the viscosity parameter, which is provided by the stochastic averaging lemma established by Gess and Hofmanov\'a in  \cite{GH}. The detailed analysis of the parabolic approximate equation is another important contribution of this paper.

The rest of this paper is organised as follows. in Section~\ref{S:2}, we establish the strong trace property for stochastic conservation laws. In Section~\ref{S:3}, we prove a comparison principle for kinetic solutions to \eqref{e1.1}--\eqref{e1.3}, from which the uniqueness of such solutions follows. In Section~\ref{S:4}  we address the corresponding initial-boundary value problem for the parabolic approximation of \eqref{e1.1}--\eqref{e1.3} obtained with the addition of an artificial viscosity.  In Section~\ref{S:5}, we analyze the passage to the limit when the artificial viscosity goes to zero and show the desired convergence.  We have also included an Appendix concerning the smoothing effects of the semigroup associated to the heat equation with Neumann condition, which play a central role in the analysis developed in Section~\ref{S:4}.

 \section{ Stochastic Strong Trace Property}\label{S:2}

  In this section we establish the  strong trace property for stochastic conservation laws which extends the corresponding property for deterministic conservation laws  first established by  Vasseur in \cite{Va}. We mention in passing that in \cite{Pv} an extension of  the result in \cite{Va} was established,  also concerning deterministic conservation laws,
  relaxing the non-degeneracy condition \eqref{e1.8} on the flux function, which we do not follow here since the even more restrictive condition \eqref{e1.7}  will  be needed for the existence of kinetic solutions to the problem \eqref{e1.1}--\eqref{e1.3}.

\begin{definition}\label{D:3.1}  Let $\cU\subset\R^{N}$ be  an open set. We say that  $\po \cU$ is a {\em   Lipschitz deformable boundary}  if the following hold:

\begin{enumerate}

\item[(i)] For each $x\in\po \cU$, there exist $r>0$ and a Lipschitz mapping $\g :\R^{N-1}\to\R$  such that, upon relabelling, reorienting and translation,
$$
\cU\cap Q(x,r) = \{\,y\in\R^{N-1}\,:\, \g(y_1,\cdots,y_{N-1})<y_N\,\}\cap Q(x,r),
$$
where $Q(x,r)=\{\,y\in\R^{N}\,:\, |y_i-x_i|\le r,\ i=1,\cdots,N\,\}$. We denote by $\hat \g$ the map $ \hat y\mapsto (\hat y,\g(\hat y))$, $\hat y=(y_1,\cdots,y_{N-1})$.

\item[(ii)]  There exists a map $\Psi:[0,1]\X\po \cU\to \bar \cU$ such that $\Psi$ is a bi-Lipschitz homeomorphism over its image and $\Psi(0,x)=x$, for all $x\in\po \cU$.
For $s\in[0,1]$, we denote by $\Psi_s$ the mapping from $\po \cU$ to $\bar \cU$ given by $\Psi_s(x)= \Psi(s,x)$, and set $\po \cU_s:=\Psi_s(\po \cU)$. We call such map a Lipschitz deformation for $\po \cU$.

\end{enumerate}
 \end{definition}

 \begin{definition} \label{D:3.2}  Let $\cU\subset\R^{N}$ be an open set with a  Lipschitz deformable boundary  and $\Psi:[0,1]\X\po \cU \to \bar \cU$ a Lipschitz deformation for $\po \cU$.
  The Lipschitz deformation is said to be {\em regular over} $\Gamma\subset\po \cU$,  if $D\Psi_s\to \operatorname{Id}$, as $s\to0$, in $L^1(\Gamma, \H^{N-1})$. It is  simply said to be {\em regular} if
  it is regular over $\po\cU$. The Lipschitz deformation  is said to be {\em strongly regular over} $\Gamma\subset\po \cU$,  if it is regular over $\Gamma$ and the Jacobian determinants  $J[\Psi_s]$, $0\le s\le 1$, defined through a convenient parametrization for $\Gamma$, belong to $\Lip(\Gamma)$ and $J[\Psi_s]\to 1$ in $\Lip(\Gamma)$ as $s\to0$.
  \end{definition}

The following result is a straightforward consequence of the normal trace formula proved in  \cite{Fr} (see also \cite{FL}) for divergence-measure fields in $L^p$, $1\le p\le \infty$,  extending the one in  \cite{CF1} for fields in $L^\infty$.  
Those are fields in $L^p(U;\R^N)$, $1\le p\le \infty$, whose divergence is   Radon measure over $U$, for some open set $U\subset \R^N$. In the statement below the field is only partially in $L^\infty$, but the component which is not in $L^\infty$ is
orthogonal to the normal to the boundary surfaces. 

\begin{theorem}  \label{T:3.0}
	Let $\cU \subset \R^d$ be an open set with strongly regular deformable boundary and $F = (F^0,F^1) \in L^p((0,T)\X\cU)\X L^\infty((0,T)\X\cU; \R^d)$, $1\le p<\infty$,   be a vector field such that the distribution 
	$\operatorname{div}_{t,x}F = \partial_t F^{0} + \operatorname{div}_{x} F^{1}$ is a Radon measure in $(0,T)\X\cO$. Then there exists an element $F^{1,b}\cdot\nu \in L^\infty((0,T)\X\partial\cU)$ such that, for every $\partial\cU$-strongly regular Lipschitz deformation $\psi$,
	\begin{equation}
	\operatornamewithlimits{ess\,lim}_{s \rightarrow 0} F^{1}(\cdot, \psi(\cdot, s))\cdot\nu_s(\cdot) = F^{1,b}\cdot\nu \text{ weakly-$\star$ in } L^\infty((0,T)\X\partial\cU), \label{e03.00}
	\end{equation}
	where $\nu_s$ denotes the unit outward normal vector field of $\psi(\{s\}\X\partial\cU)$.
\end{theorem}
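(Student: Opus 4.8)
The plan is to reduce the statement to the normal trace theorem for $L^p$ divergence-measure fields established in \cite{Fr}. The only novelty here is that $F^0$ is merely in $L^p$ while $F^1$ is in $L^\infty$; since the boundary trace in \eqref{e03.00} only involves the spatial component $F^1$ dotted with the (spatial) normal $\nu_s$, the poor integrability of the time component should not interfere. Concretely, I would first record that $F$ is a divergence-measure field in the sense of \cite{Fr}: it lies in $L^p((0,T)\times\cU;\R^{d+1})$ (since $L^\infty\subset L^p$ on the bounded-in-time slab, and $F^0\in L^p$ by hypothesis) and $\operatorname{div}_{t,x}F$ is a Radon measure by assumption. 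Hence the normal trace theorem applies and yields a function $F^b\cdot\mathbf n\in L^\infty$ (or at worst in the appropriate dual space) on the lateral boundary $(0,T)\times\partial\cU$, realized as an essential weak-$\star$ limit of $F(\cdot,\psi(\cdot,s))\cdot\mathbf n_s$ along any strongly regular deformation $\psi$, where $\mathbf n_s$ is the full space-time outward normal to the deformed surface $\{s\}\times\partial(\text{slab})$ — but since we deform only in the $x$-variable, keeping $(0,T)$ fixed, the relevant normal $\mathbf n_s$ has vanishing time component and reduces to $(0,\nu_s)$.

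The second step is to check that this general trace has the claimed $L^\infty$-regularity, which is where the splitting of $F$ into $L^p$ and $L^\infty$ parts matters. Because $\mathbf n_s = (0,\nu_s)$, we have $F(\cdot,\psi(\cdot,s))\cdot\mathbf n_s = F^1(\cdot,\psi(\cdot,s))\cdot\nu_s$, and $F^1\in L^\infty$. One then uses the quantitative bound from the normal trace construction in \cite{Fr}: the normal trace of a divergence-measure field is controlled by the $L^\infty$-norm of the field in a neighborhood of the boundary plus the total variation of the divergence measure near the boundary; here ``the field near the boundary'' is effectively $F^1$ restricted to a collar of $\partial\cU$, which is in $L^\infty$ uniformly. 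This gives $\|F^{1,b}\cdot\nu\|_{L^\infty((0,T)\times\partial\cU)}\lesssim \|F^1\|_{L^\infty}$, so the trace is genuinely an $L^\infty$ function and not just a measure or a distribution. The strong regularity of the deformation enters exactly as in \cite{Fr}: the Jacobian factors $J[\Psi_s]$ converging to $1$ in $\Lip(\Gamma)$ (Definition~\ref{D:3.2}) ensure that the change-of-variables in the surface integrals defining the approximating traces passes to the limit and produces a genuine weak-$\star$ limit rather than merely a subsequential or averaged one.

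The third, more technical step is to verify that the $L^p$-component $F^0$ does not contaminate the limit. In the normal-trace formula one integrates $F\cdot\mathbf n_s$ against a test function $\varphi$ over $\{s\}\times\partial(\text{slab})$ and lets $s\to0$; the contribution of $F^0$ is $\int \varphi\, F^0\, (\text{time component of }\mathbf n_s)$. Since $\mathbf n_s=(0,\nu_s)$ has zero time component identically for every $s$ (the deformation fixes the $t$-slices), this contribution is identically zero, so only $F^1$ survives and the construction of \cite{Fr} goes through verbatim with $F^0$ playing a purely passive role in making $\operatorname{div}_{t,x}F$ a well-defined measure.

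The main obstacle I anticipate is purely bookkeeping rather than conceptual: one must make sure the framework of \cite{Fr} is literally applicable to the space-time cylinder $(0,T)\times\cU$ and the partial deformation $\psi$ (which deforms only $\partial\cU$ and leaves $(0,T)$ untouched), i.e., that the product domain $(0,T)\times\cU$ together with the cylindrical deformation $(t,x)\mapsto(t,\Psi_s(x))$ satisfies the strongly-regular-deformable-boundary hypotheses of Definitions~\ref{D:3.1}--\ref{D:3.2} in $\R^{d+1}$. This is straightforward — a product of a smooth interval with a strongly regular deformable boundary is again strongly regular deformable, and the cylindrical deformation has Jacobian equal to $J[\Psi_s]$ on the lateral part and $1$ elsewhere — but it must be stated. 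Once that compatibility is in place, Theorem~\ref{T:3.0} is an immediate corollary of the $L^p$ normal trace formula of \cite{Fr}, as asserted in the text.
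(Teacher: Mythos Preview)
Your proposal is correct and matches the paper's own treatment: the paper does not give a proof of Theorem~\ref{T:3.0} at all, but simply states it as a straightforward consequence of the normal trace formula in \cite{Fr} (see also \cite{FL}, \cite{CF1}), with the one-line observation that ``the component which is not in $L^\infty$ is orthogonal to the normal to the boundary surfaces.'' Your three-step outline is exactly this observation spelled out in detail, and the bookkeeping concern you flag about the cylindrical domain is precisely the kind of verification the paper leaves implicit.
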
  

Next we state and prove  our theorem concerning the strong trace property for stochastic conservation laws. 

\begin{theorem} \label{T:3.1} 
	Assume that $T>0$, $\cO \subset \R^d$ is a bounded open set with regular deformable Lipschitz boundary, and that $\Abf$ is a $C^3$ flux function satisfying \eqref{e1.8}. 
	Let also $u \in L^\infty(\Omega\X(0,T)\X\cO) \cap L^2(\Omega\X[0,T];L^2(\cO))$, $a\le u\le b$,  be an entropy solution to \eqref{e1.1}, that is, $u$ is predictable and for all convex $\eta\in C^2(\R)$,
	and $q(u)=\int_0^u \eta'(\z)\Abf'(\z)\,d\z$, we have
	\begin{equation}\label{e03.1}
	d\eta(u)+ \nabla_x\cdot q(u)\,dt\le \sum_{k\ge1} g_k(x,u(t,x))\eta'(u) \,d\b_k+ \frac12 G^2(x,u)\eta''(u)\,dt,
	\end{equation}
	in the sense of the distributions in $(0,T)\X\cO$, a.s.\ in $\Om$. 
	Then, there exists a function $u^\tau \in L^\infty(\Omega\X(0,T)\X\partial\cO)$ such that, for every $\po\cO$-strongly regular Lipschitz deformation $\psi : [0,1] \X \partial\cO \rightarrow \overline \cO$,
	\begin{equation}\label{e03.2}
	\operatornamewithlimits{ess\,lim}_{s \rightarrow 0}  \bbE\int_0^T\int_{\partial \cO} |u(t, \psi(s,\widehat{x})) - u^\tau(t,\widehat{x})|\, d\H^{d-1}(\widehat{x})dt = 0, 
	\end{equation}
 where $\H^{d-1}$ denotes the $(d-1)$--dimensional Hausdorff measure. In particular, for any $C^2$ function $G : \R \rightarrow \R$, we have
	$$
	[G(u)]^\tau = G(u^\tau).
	$$

Moreover, we also have that 
\begin{equation}\label{e03.2'}
	\operatornamewithlimits{ess\,lim}_{s \rightarrow 0}  \int_0^T\int_{\partial \cO} |u(t, \psi(s,\widehat{x})) - u^\tau(t,\widehat{x})|\, d\H^{d-1}(\widehat{x})dt = 0, 
	\end{equation}
 almost surely.
 
\end{theorem}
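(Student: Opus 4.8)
The plan is to follow the strategy of Vasseur's original argument for the strong trace property of deterministic conservation laws, suitably upgraded to handle the stochastic forcing. The starting point is a blow-up analysis near the boundary. For $\widehat{x} \in \partial\cO$ and small $s > 0$, rescale the solution in the deformed coordinates provided by the Lipschitz deformation $\psi$: set $u^s(t, \widehat{x}, y) := u(t, \psi(sy, \widehat{x}))$ for $y$ in a bounded interval. Because $a \le u \le b$, the family $\{u^s\}$ is bounded in $L^\infty$, so after passing to Young measures (parametrized by $\omega$, $t$, $\widehat{x}$, $y$) we obtain a limit object $\nu_{t,\widehat{x},y}$. The entropy inequality \eqref{e03.1}, rescaled, passes to the limit: the stochastic term $\sum_k g_k(x,u)\eta'(u)\,d\beta_k$ carries a factor that vanishes under the rescaling (the martingale part is $O(s^{1/2})$ in $L^2(\Omega)$ after integration against test functions supported at scale $s$ in the normal direction, while the drift $\frac12 G^2\eta''$ is $O(s)$), so in the limit the blow-up profile satisfies a \emph{deterministic} kinetic/entropy inequality in the variable $y$ alone, for a.e.\ $\omega$. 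This is the key place where the stochastic structure is used: one must estimate the quadratic variation of the stochastic integral against a test function concentrated near the boundary and show it is negligible compared to the deterministic terms at the same scale — I expect the bound \eqref{e1.4} together with the Burkholder–Davis–Gundy inequality to do this, giving a bound like $\bbE|\int \cdots d\beta_k|^2 \lesssim s \cdot (\text{fixed})$.

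Once the blow-up profile is shown to be $\omega$-a.s.\ a deterministic entropy solution of a conservation law that is constant in $t$ and $\widehat{x}$ and depends only on the normal variable $y$, the non-degeneracy condition \eqref{e1.8} forces rigidity: by the deterministic argument of Vasseur (using that the limit kinetic function $\chi = 1_{\nu > \xi}$ satisfies a transport equation whose characteristics, by \eqref{e1.8}, foliate transversally to the constancy directions), the Young measure $\nu_{t,\widehat{x},y}$ must be independent of $y$, hence a Dirac mass $\delta_{u^\tau(t,\widehat{x})}$ for a well-defined $L^\infty$ function $u^\tau$. Measurability of $u^\tau$ in $\omega$ follows from the measurable dependence of the whole construction on $\omega$ (all objects are built from $u$, which is predictable). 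The convergence of the Young measures to a Dirac mass then upgrades to strong $L^1_{\text{loc}}$ convergence of $u^s \to u^\tau$, which is exactly the averaged statement: since everything is bounded in $L^\infty$, dominated convergence promotes the a.e.-in-$(t,\widehat{x},y,\omega)$ convergence along a subsequence to convergence in $L^1(\Omega \times (0,T) \times \partial\cO)$, and a standard argument shows the full $\operatorname{ess\,lim}_{s\to0}$ holds and not merely along subsequences. The statement $[G(u)]^\tau = G(u^\tau)$ is immediate from strong $L^1$ convergence and the $L^\infty$ bound, since $G$ is Lipschitz on $[a,b]$.

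For the final conclusion \eqref{e03.2'} — almost sure convergence rather than convergence in expectation — I would argue as follows. We already know the inner integral $I_s(\omega) := \int_0^T\int_{\partial\cO}|u(t,\psi(s,\widehat{x})) - u^\tau(t,\widehat{x})|\,d\H^{d-1}\,dt$ satisfies $\operatorname{ess\,lim}_{s\to0}\bbE\,I_s = 0$ and $0 \le I_s \le C$ uniformly. The point is that for \emph{each fixed} $\omega$ outside a null set, the function $u(\omega,\cdot,\cdot)$ is an entropy solution of a \emph{deterministic} conservation law on $(0,T)\times\cO$ (this is the content of \eqref{e03.1} holding a.s.\ in $\Om$, interpreted pathwise — the forcing $\sum_k g_k(x,u)\eta'(u)\,d\beta_k$ is, for fixed $\omega$, just a given distribution in $t$, and the pathwise structure of the stochastic integral as a continuous function of $t$ makes $d\eta(u) = (\text{continuous in }t)\,'$ a measure; more precisely the divergence-measure field machinery of Theorem \ref{T:3.0} applies pathwise). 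Hence the \emph{deterministic} strong trace theorem of Vasseur applies for a.e.\ fixed $\omega$, yielding $\operatorname{ess\,lim}_{s\to0} I_s(\omega) = 0$ for that $\omega$; and the trace function produced pathwise must coincide $\H^{d-1}\otimes dt$-a.e.\ with $u^\tau(\omega,\cdot,\cdot)$ because of the convergence in expectation already established (uniqueness of $L^1$ limits). This gives \eqref{e03.2'} directly.

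The main obstacle I anticipate is making the pathwise interpretation of the stochastic entropy inequality \eqref{e03.1} rigorous enough to invoke the deterministic trace theorem: one must verify that, $\omega$-a.s., the distribution $d\eta(u) + \nabla_x\cdot q(u)\,dt$ is indeed a Radon measure on $(0,T)\times\cO$ (so that $(\eta(u), q(u))$ is a divergence-measure field to which Theorem \ref{T:3.0} applies), which requires controlling the stochastic integral term as a measure in time — this uses that $t \mapsto \sum_k\int_0^t g_k(x,u)\eta'(u)\,d\beta_k$ has $\omega$-a.s.\ continuous (indeed Hölder) trajectories, so its distributional time-derivative pairs against test functions boundedly, but uniformity in the spatial variable and the handling of the $G^2\eta''$ drift term need care. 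A secondary technical point is the blow-up argument's compatibility with merely Lipschitz (not smooth) deformations $\psi$ and the regularity hypothesis ``regular deformable Lipschitz boundary'' — here the strong regularity of $\psi$ (the $\Lip$-convergence $J[\Psi_s]\to1$) is what lets the rescaled entropy inequality converge cleanly, and one should check Vasseur's covering/localization arguments survive under only Lipschitz charts.
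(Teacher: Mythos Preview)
Your overall blow-up strategy for \eqref{e03.2} is correct and matches the paper's approach: rescale near the boundary, show the stochastic terms vanish under rescaling (the paper uses It\^o isometry and Burkholder exactly as you suggest, obtaining $O(\ve^{1/2})$ bounds), obtain a deterministic transport equation in the limit, and use non-degeneracy for rigidity. The paper works with the kinetic function $\chi(u;\xi)$ rather than Young measures of $u$, but these are equivalent. One point where you are too quick: the rigidity step is not simply ``cite Vasseur's deterministic argument''---the paper carries out a full averaging-lemma argument (Fourier decomposition into low-frequency, near-degenerate, and far-from-degenerate pieces, with the stochastic remnants $\ell_\ve^{(j)}$ controlled separately), because the blow-up equation still contains the vanishing stochastic terms and the kinetic measure, and one needs compactness of the averages $\int \chi_\ve\,d\xi$ in $L^1_{\loc}$ before passing to the limit.

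Your argument for \eqref{e03.2'} has a genuine gap. You propose to fix $\omega$ and apply the deterministic Vasseur theorem to $u(\omega,\cdot,\cdot)$, treating the stochastic integral as ``just a given distribution in $t$''. But Vasseur's theorem is for homogeneous conservation laws; with a forcing term present pathwise (even a measure-valued one), you would need to redo the entire blow-up and averaging argument for that fixed $\omega$, which is precisely the content of the theorem you are trying to prove---so this is circular. You correctly flag this as ``the main obstacle'' but do not resolve it. The paper's route is different and avoids this circularity: having established the $L^1(\Omega\times\Sigma\times(-L,L))$ convergence $\ff_\psi(\cdot,s,\cdot)\to\ff^\tau$, extract a subsequence converging for a.e.\ $\omega$, which forces $\ff^\tau(\omega,\cdot,\cdot)$ to be a $\chi$-function for a.e.\ $\omega$ (by the criterion that strong $L^1$ limits of $\chi$-functions are $\chi$-functions). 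Separately, the pathwise \emph{weak} trace exists for a.e.\ $\omega$ by the divergence-measure field argument (this part does work pathwise, since the stochastic integral has continuous trajectories and contributes only to the time-component of the field, which is orthogonal to the boundary normal). Combining these---pathwise weak convergence to a $\chi$-function---the weak-to-strong criterion gives pathwise strong convergence, which is \eqref{e03.2'}.
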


\begin{proof}  
We begin by observing that \eqref{e03.1} in the limit for a suitable sequence of smooth approximations of  $\eta(u;\xi)=(u-\xi)_+-(\xi)_+$ gives rise to a measure $m\in \M((0,T)\X\cO\X\R)$, defined for a.e.\ $\om\in \Om$ by
\begin{multline}\label{e03.3}
m:=-\po_t\eta(u;\xi)- \nabla_x\cdot q(u;\xi) + \sum_{k\ge1} g_k(x,u(t,x))\eta'(u;\xi) \,\dot \b_k\\ + \frac12 G^2(x,u)\eta''(u;\xi),
\end{multline}
where $\eta'(u;\xi)=\frac{d}{du}\eta(u;\xi)= 1_{(\xi,\infty)}(u)$  and $\eta''(u;\xi)=\frac{d^2}{du^2}\eta(u;\xi)=\d_{u=\xi}$. 
In particular, $m$ is a kinetic measure in the sense of Definition~\ref{D:2.1}, and for $L>\|u\|_{L^\infty(\Om\X(0,T)\X\cO)}$ we have that $\supp m\subset [0,T]\X\overline \cO\X(-L,L)$. This can be easily 
seen by applying $m$ to  $\phi(t,x)\rho(\xi)$  with  $\phi\in C_c^\infty((0,T)\X\cO)$ and $\rho\in C_c(\R)$ so that $\rho(\xi)=0$, for $|\xi|\le L$, observing that $\eta(u;\xi)=-\xi$, for $\xi> L$, $\eta(u(t,x);\xi)=u-\xi$, for $\xi<-L$, 
$q(u;\xi)=0$, for $\xi>L$, $q(u;\xi)=\Abf(u)-\Abf(\xi)$, for $\xi<-L$,   $\eta'(u;\xi)=0$, for $\xi>L$, $\eta'(u;\xi)=1$, for $\xi<-L$,  $G^2(x,\xi)=0$, if $|\xi|\ge L$,  and $u(t,x)$  satisfies \eqref{e1.1} in the sense of 
the distributions on $(0,T)\X\cO$ a.s.\ in $\Om$.  

We also
observe that $\po_\xi\eta(u;\xi)=1_{(-\infty,u)}(\xi)-1_{(-\infty,0)}(\xi)=\chi(u;\xi)$, which is the well known kinetic function. Similarly, we observe that $\po_\xi q(u;\xi)= \abf(\xi)\chi(u;\xi)$. We also see that 
$\po_\xi \eta'(u;\xi)=\po_\xi 1_{(u,+\infty)}(\xi)=\d_{u=\xi}$. Therefore, deriving \eqref{e03.3} with respect to $\xi$, in the sense of the distribution on $(0,T)\X\cO\X\R$, we obtain the kinetic equation
\begin{equation}\label{e03.4}
\frac{\po\ff}{\po t} +\abf(\xi)\cdot \nabla_x \ff=\frac{\po\qq }{\po\xi} -\sum_{k=1}^\infty \frac{\po \ff}{\po \xi} g_k\dot\b_k+ \sum_{k=1}^\infty\d_{\xi=0}g_k\dot\b_k,
\end{equation}
for  $\ff(t,x,\xi):=\chi(u(t,x);\xi)$,  where $\qq=m-\frac12 G^2\d_{u=\xi}$, which could also be obtained by using the fact that $S(u)-S(0)=\int_{\R}S'(\xi)\chi(u;\xi)\,d\xi$, for any $S\in C^1(\R)$.

\bigskip
\centerline{\em \#1. Preparing for the proof.}
\bigskip
Let us henceforth fix $L>\|u\|_{L^\infty(\Om\X(0,T)\X\cO\X\R)}$. Concerning the measure $m$,  defined by \eqref{e03.3}, we first observe the following fact, whose  proof is somehow standard by apply a suitable test function to equation \eqref{e03.3},
and so we omit it. 

\begin{lemma}\label{L:3.01} For any $1\le p <\infty$, it holds
\begin{equation}\label{e03.5}
	\bbE\Vert m \Vert_{\M}^p = \bbE m\big( (0,T)\X\cO\X[-L,L] \big)^p \leq C(p).	
	\end{equation}
\end{lemma}

Taking in \eqref{e03.1} $\eta(u)=\pm u$ we conclude that $u$ satisfies
\begin{equation} \label{e03.6'}
du+\nabla\cdot \Abf(u)\, dt=  \sum_{k\ge1} g_k(x,u(t,x))\, d\b_k ,
\end{equation}
in the sense of the distributions on $(0,T)\X\cO$, a.s.\ in $\Om$,  which also follows from \eqref{e03.4}, using as test function $\varphi(t,x,\xi)=\phi(t,x)$, with $\phi\in C_c^\infty((0,T)\X\cO)$, and the fact that $\int_{\R}\ff(t,x,\xi)\,d\xi=u(t,x)$,
and we use the fact that $\qq$ is supported in $[0,T]\X\overline\cO\X(-L,L)$.  

\bigskip
\centerline{\em \#2. The existence of weak traces.}
\bigskip
After localizing as done in \cite{Va}, we may assume that $\cO$ is of the form
\begin{equation}\label{e.orient}
\cO_0 = \big\{ x =(\widehat{x}, x_d) \in (-r,r)^{d-1}\X(-r,r)\,: \, x_d > \gamma_0(\widehat{x}) \big\},
\end{equation}
where $r>0$ and $\gamma_0 : (-r,r)^{d-1} \rightarrow \R$ is a $C^2$-function satisfying $-r<\gamma_0(\widehat{x})<r$ everywhere. Hence, the boundary subset we are interested in is
$$
\Gamma_0 = \big\{ x = (\widehat{x}, x_d) \in (-r,r)^{d}\X(-r,r)\,: \, x_d = \gamma_0(\widehat{x}) \big\}.
$$
Notice that $\Gamma_0$ is parametrized in $\widehat{x}$, since it is the graph of $\gamma_0$. As a consequence, for any $\Gamma_0$-strongly regular Lipschitz deformation $\psi$, we can write
$$
\ff_\psi(t,\widehat{x},s,\xi) = \ff(t,\psi(s,\widehat{x}),\xi) \text{ for every $\widehat{x} \in (-r,r)^{d-1}$}.
$$

In order to make easier the writing, let us set
$$
\begin{cases}
Q      := (0,T) \X (-r,r)^d,     \text{ and} \\
\Sigma := (0,T) \X (-r,r)^{d-1}.
\end{cases}
$$

As in \cite{Va}, the following result establishing the existence of a  weak trace  for the function $\ff(\om,t,x,\xi)$  is a decisive step towards the strong trace property.  Although to write a generic neighborhood
$\cO_0$ in the form \eqref{e.orient} requires, in general, the use of an  affine transformation, that is, translation, relabeling of coordinates and possibly reorientation of one axis, the effect of such transformation in
equation \eqref{e03.4} would be just that, instead of $\abf(\xi)$, we would have $\mathcal{R}\abf(\xi)$, where $\mathcal{R}$ is the linear part of the referred affine transformation. So, for simplicity, we may assume, without
loss of generality, that $\ff$ keeps satisfying the same equation \eqref{e03.4} in the new coordinates which are still denoted in the same way.   

\begin{lemma}[Existence of the weak traces] \label{L:3.02}
	There exists a unique function $\ff^\tau \in L^\infty(\Omega\X\Sigma\X(-L,L))$ such that, for any $\Gamma_0$-strongly regular Lipschitz deformation, for a.e.\ $\om\in\Om$, we have
	\begin{equation}
	\esslim_{s \rightarrow 0} \ff_\psi(\om, \cdot, s, \cdot) = \ff^\tau(\om,\cdot,\cdot) \text{ in the weak-$\star$ topology in } L^\infty(\Sigma\X(-L,L)). \label{e03.7'}
	\end{equation}
	
Moreover,
\begin{equation}\label{e03.7''}
\operatornamewithlimits{ess\,lim}_{s \to 0} \ff_\psi(\cdot, s, \cdot) = \ff^\tau(\cdot,\cdot) \text{ in the weak-$\star$ topology of $L^\infty(\Omega\X\Sigma\X(-L,L))$}.
\end{equation}
\end{lemma}

\begin{proof}  {\em Step 1:} Let $(h_n)_{n\in\N} \subset C_c^1((-L,L))$ be dense in $L^1((-L,L))$. We consider representatives in $L^\infty(\Om\X(0,T)\X\cO)$ for all functions of the forms 
$$
\int_{-L}^L h_n(\xi) \ff(t,x,\xi)\,d\xi,\quad \int_{-L}^L h_n(\xi)  \ff(t,x,\xi)\abf(\xi)\,d\xi,\quad   h_n'(u(t,x))G^2(x,u(t,x)),
$$
and in $L^2(\Om\X(0,T)\X\cO)$ for functions of the form
$$
          \sum_{k=1}^\infty  \int_0^t h_n(u(t,x))g_k(x,u(s,x)) \,d\b_k(s) .
$$
 Let $\Om_0\subset\Om$ be a subset of total measure such that for all $\om\in\Om_0$, the corresponding paths of these functions, viewed as Banach space-valued stochastic processes,  are well defined functions 
 in $L^\infty((0,T)\X\cO_0)$ and $L^2((0,T)\X\cO_0)$, respectively.  We also assume that for all $\om\in\Om_0$ there exists $C(\om)>0$ such that 
 \begin{align*}
m \big((0,T)\X\cO \X [-L,L]\big) \leq C(\om), 
\end{align*}
which is possible by \eqref{e03.5}.  So, let us fix for the moment  $\om\in\Om_0$. 

Let us consider the vector fields $F_{n}$ given by
\begin{align}
F_{n}(t,x) = \Bigl( \int_{-L}^L h_n(\xi) \ff(t,x,\xi) \, d\xi - \sum_{k=1}^\infty \int_0^t h_n&(u(t,x)) g_k(x, u(s,x)) \, d\beta_k(s)\Bigr. , \nonumber \\
&\Bigl. \int_{-L}^L h_n(\xi) \ff(t,x,\xi) \abf(\xi) \, d\xi\Bigr) . \label{e03.6}
\end{align}
We see that  $F_{n} \in L^2((0,T)\X\cO_0)\X  L^\infty( (0,T)\X\cO_0; \R^{d})$. Moreover, it is not hard to check that
$$
\operatorname{div}_{t,x} F_{n} = -  \int_{-L}^L h_n'(\xi) \qq(t,x,\xi)\,d \xi  \in \M((0,T)\X\cO_0).
$$
As a consequence, since $\Gamma_0$ is a strongly regular deformable Lipschitz boundary, Theorem~\ref{T:3.0} tell us that there exists a set $\S_{n} \subset [0,1]$ of total measure and some $F_{n}^{1,b}\cdot \nu \in L^\infty((0,T)\X(-r,r)^{d-1})$, which does not depend on $\psi$, such that
\begin{align}
F_{n}^{1}(\cdot, \psi(\cdot, s))\cdot\nu_s(\cdot) \stackrel{\star}{\rightharpoonup} F_{n}^{1,b}\cdot\nu &\text{ $\star$-weakly in $L^\infty(\Sigma)$} \nonumber \\ &\text{ as $s \rightarrow 0$ along $s \in \S_{n}$}. \label{e03.7}
\end{align}
Write $\S = \cap_{n=1}^\infty \S_{n}$ so that $\S$ also has total measure in $[0,1]$. Let us now check that $F_{n}$ depends linearly on  $h_n$. For any integer $M \geq 1$ and $\varphi_p \in L^1(\Sigma)$, $1 \leq p \leq M$, 
the relations \eqref{e03.00}, \eqref{e03.6} and \eqref{e03.7}, the latter taking $s \in \S$,  say that
\begin{align*}
\Big|\int_\Sigma&  \sum_{n,p=1}^M  (F_{n}^{1,b} . \nu)(t,\widehat{x}) \varphi_p(t,\widehat x) \,dt d\widehat{x}    \Big| \\ &\leq \Vert \abf \Vert_{L^\infty(-L,L)} \int_\Sigma \int_{-L}^L \Bigg|\sum_{n,p=1}^M    h_n(\xi) \varphi_p(t,x) \Bigg| \, d\xi d\widehat{x} dt \\
						 &= \text{(const.)} \Bigg\Vert \sum_{n,p=1}^M  h_n \otimes \varphi_p \Bigg\Vert_{L^1(\Sigma\X(-L,L))}.
\end{align*}
Thus, as $(L^1)^* = L^\infty$, there exists some $H \cdot\nu \in L^\infty(\Sigma\X(-L,L))$ such that, for  all $h \in L^1(-L,L)$ and all $\varphi \in C_c^\infty(\Sigma),$
\begin{align}
 \int_\Sigma \int_{-L}^L h(\xi) &\varphi(t,\widehat{x}) \ff(t,\widehat{x},\psi(\widehat{x},s)) \abf(\xi)\cdot \nu_s(\widehat{x}) \, d\xi d\widehat{x}\, dt \nonumber \\
&\rightarrow \int_\Sigma \int_{-L}^L h(\xi) \varphi(t,x) (H\cdot\nu)(t,\widehat{x}) \, d\xi d\widehat{x}\, dt,  \label{e03.8}
\end{align}
as $s \rightarrow 0$ along $s \in \S$. Notice that $H\cdot\nu$ does not depend on $\psi$.

{\em Step 2:} To conclude, let us observe that, since $\Vert \ff_\psi(\cdot,s,\cdot) \Vert_{L^\infty} \leq 1$, the Banach--Alaoglu theorem asserts that, for every strongly regular Lipschitz deformation $\psi$ and every sequence $s_n$ in $\S$ converging to $0$, there exists a subsequence $s_{n_k}$ and some $\ff_\psi^\tau \in L^\infty(\Sigma\X(-L,L))$  such that
$$
\ff_\psi(\cdot,s_{n_k},\cdot) \stackrel{\star}{\rightharpoonup} \ff_\psi^\tau \text{ $\star$-weakly in $L^\infty(\Sigma\X(-L,L))$ as $k \rightarrow \infty$}.
$$
Thus, by \eqref{e03.8}, we deduce from the fact that $\nu_s \rightarrow \nu$ in $L^1((-r,r)^{d-1};\R^d)$ that
\begin{align*}
 \int_\Sigma \int_{-L}^L h(\xi) &\varphi(t,\widehat{x}) \ff_\psi^\tau (t,\widehat{x}) \abf(\xi)\cdot \nu(\widehat{x}) \, d\xi\, d\widehat{x}\, dt    \nonumber \\
&=  \int_\Sigma \int_{-L}^L h(\xi) \varphi(t,x) (H\cdot\nu)(t,\widehat{x}) \, d\xi\, d\widehat{x}\, dt, 
\end{align*}
for every  $h \in L^1(-L,L)$ and $\varphi \in C_c^\infty(\Sigma)$. Since the right-hand term is independent of $\psi$ and $s_n$, so is $\abf(\xi)\cdot\nu(\widehat{x}) \ff_\psi^\tau(t,\widehat{x},\xi)$. On the other hand, because the nondegeneracy condition implies that
$$
\LL^1\big\{ \xi \in (-L,L)\,:\, \abf(\xi) \cdot \nu(\widehat{x}) = 0 \big\} = 0,
$$
we conclude that $\ff_\psi^\tau$ also does not depend on $\psi$ and $s_n$, hence we may denote it by $\ff^\tau$. 

%{\em Step 3:}  Thus far we have kept $\om\in\Om_0$ fixed. 
%In particular, $\ff^\tau$ just defined depends on $\om\in\Om_0$, therefore $\ff^\tau$ is an almost everywhere defined function in $\Om\X\Sigma\X(-L,L)$ and, for each $\om\in\Om_0$, we have that for any $\Gamma_0$-Lipschitz regular deformation $\psi$
%\begin{multline}\label{e03.9} 
%\ff^\tau(\om,\cdot,\cdot)=\esslim_{s\to0}\ff_\psi(\om,\cdot,s,\cdot), \\
%\text{in the weak-$\star$ topology of $L^\infty(\Sigma\X(-L,L))$}.  
%\end{multline}
%Moreover, we claim that $\ff^\tau$ is measurable and $\ff^\tau\in L^\infty(\Om\X\Sigma\X(-L,L))$.
%Indeed, to see this it suffices to observe that, for all $\om\in\Om_0$ and any sequence $(s_n)_{n\in\N}\subset (0,1)$, with $s_n\to0$, as $n\to\infty$, we have
%\begin{multline*}
%\ff^\tau(\om,\cdot,\cdot)=\lim_{n\to\infty}\frac{1}{s_n} \int_0^{s_n}\ff_\psi(\om,\cdot,s,\cdot)\,ds, \\
%\text{in the weak-$\star$ topology of $L^\infty(\Sigma\X(-L,L))$}, 
%\end{multline*}
%and, since $\Om_0$ is a subset of total measure of $\Om$, we easily deduce that
%\begin{multline}\label{e03.9'}
%\ff^\tau(\cdot,\cdot,\cdot)=\lim_{n\to\infty}\frac{1}{s_n} \int_0^{s_n}\ff_\psi(\cdot,\cdot,s,\cdot)\,ds, \\
%\text{in the weak-$\star$ topology of $L^\infty(\Om\X%\Sigma\X(-L,L))$}.
%\end{multline}
%This relation shows that $f^\tau(\cdot,\cdot,\cdot)\in %L^\infty(\Om\X\Sigma\X(-L,L))$.

{\em Step 3:} Arguing as before, but considering now the vector fields
	\begin{align*}
	F_{m,n}(t,x) = \bbE \Big[ X_m \, \Big( \int_{-L}^L h_n(\xi) f(t,x,\xi) \, d\xi - \sum_{k=1}^\infty \int_0^t h_n&(u(t,x)) g_k(x, u(s,x)) \, d\beta_k(s) , \nonumber \\& \int_{-L}^L h_n(\xi) f(t,x,\xi) a(\xi) \, d\xi \Big)  \Big],
	\end{align*}
	where $(X_m)_{m \in \N}$ is a sequence in $L^\infty(\Omega)$ that is dense in $L^1(\Omega)$ (notice that we can always suppose that $\Omega$ is countably generated), we can deduce the existence of some $f^b \in L^\infty(\Omega \X \Sigma \X (-L,L))$ such that 
	$$\operatornamewithlimits{ess\,lim}_{s \to 0} \ff_\psi(\cdot, s, \cdot) = \ff^b \text{ in the $\star$-weak topology of $L^\infty(\Omega\X\Sigma\X(-L,L))$.}$$
	
{\em Step 4:} It remains to show that $\ff^b(\omega, \cdot, \cdot) = \ff^\tau(\om, \cdot, \cdot, \cdot)$ for almost all $\om \in \Omega$ in the $L^1$-sense. This, however, can be seen from the fact that both are the weak-$\star$ limit of $\frac{1}{s} \int_0^s \ff_\psi(\cdot, \s, \cdot)\, d\s$ in $L^\infty(\Om \X \Sigma \X (-L,L))$ as $s \to 0$. Observe that this also shows that $\ff^\tau$ is measurable and $\ff^\tau\in L^\infty(\Om\X\Sigma\X(-L,L))$.

%It remains to show \eqref{e03.7''}. For this, note that for any sequence $(s_n)_{n\to\mathbb{N}}$ with $s_n\to 0$, as $n\to \infty$, the sequence $(f_\psi(\cdot,\cdot,s_n,\cdot))_{n\in\mathbb{N}}$ has a subsequence converging in the weak-$\star$ topology of $L^\infty(\Om\times\Sigma\times(-L,L))$. On the other hand, the limit of any subsequence that converges weakly-$\star$ in $L^\infty(\Om\times\Sigma\times(-L,L))$ has to be $f^\tau$ due to \eqref{e03.9'}. Thus, we conclude that
%\begin{multline*}
%\ff^\tau(\cdot,\cdot,\cdot)=\lim_{s_n\to 0}\ff_\psi(\cdot,\cdot,s_n,\cdot)\,ds, \\
%\text{in the weak-$\star$ topology of $L^\infty(\Om\X\Sigma\X(-L,L))$},
%\end{multline*}
%which implies \eqref{e03.7''}.
\end{proof}

%We remark, concerning the previous proof, that, although the set $\S\subset(0,1)$ depends on $\om\in\Om_0$, and so we should denote it $\S(\om)$, we may, for each $\om\in\Om_0$, modify the definition of $\ff$ over the set 
%$$
%\CN_\om:=\bigcup_{s\in(0,1)\setminus\S(\om)} \{\om\}\X(0,T)\X\psi(s,\Gamma_0)\X(-L,L),
%$$
%setting $\ff=\ff^\tau$ over $\CN_\om$. Now, by Fubini theorem, we have that the set $\CN:=\bigcup_{\om\in\Om_0} \CN_\om$ is a subset of measure zero of $\Om\X\Sigma\X(-L,L)$, so the resulting function is still in the same class in $L^\infty(\Om\X\Sigma\X(-L,L))$. 
%Therefore, we also have
% \begin{equation*}
%	\esslim_{s \rightarrow 0}\ff_\psi(\cdot, \cdot, s, \cdot) = \ff^\tau \text{ weakly-$\star$ in } L^\infty(\Omega\X\Sigma\X(-L,L)). 
%\end{equation*}	  

Next, we need to convert the weak-$\star$ convergence in $L^\infty(\Sigma\X(-L,L))$ in the statement of the previous lemma to a strong convergence in $L^1(\Sigma\X(-L,L))$. 
For that we recall the following criterion from \cite{Va}, to which we refer for the proof. We first recall that, for a measure space $S$,  a function $z\in L^\infty(S\X\R)$ is a $\chi$-function if for almost all
$x\in S$, there exists $a(x)\in \R$ such that 
$$z(x,\xi)= 1_{(-\infty, a(x))}(\xi)-1_{(-\infty,0)}(\xi).
$$ 
Note that, in this case, $a(x)=\int_{-\infty}^\infty z(x,\xi)d\xi$. Furthermore, $\| a\|_{L^\infty(S)}\leq L$ if and only if the corresponding $\chi$-function satisfies $z(x,\xi)=0$, for a.e. $x\in S$ and $|\xi|\geq L$. In this case, we may simply consider $z$ as an element of $L^\infty(S\times (-L,L))$.

\begin{lemma}\label{2.criterion0}
Let $S$ be a finite measure space and let $g_n\in L^\infty(S\times (-L,L))$ be a sequence of $\chi$-functions converging weakly to $g\in L^\infty(S\times (-L,L))$. Define $v_n(\cdot)=\int_{-L}^Lg_n(\cdot,\xi)d\xi$ and $v=\int_{-L}^Lg(\cdot,\xi)d\xi$. Then, the three following propositions are equivalent:
\begin{itemize}
\item[(i)] $g_n$ converges strongly to $g$ in $L^1(S\times \R)$,
\item[(ii)] $v_n$ converges strongly to $v$ in $L^1(S)$,
\item[(iii)] $g$ is a $\chi$-function.
\end{itemize}
\end{lemma}

As a direct consequence of Lemma~\ref{2.criterion0} we have the following.

\begin{lemma} \label{2.criterion}
	For  every regular Lipschitz deformation $\psi$,
	$$
	\esslim_{s \rightarrow 0} \ff_\psi(\cdot,\cdot, s,\cdot) = \ff^\tau(\cdot,\cdot,\cdot) \text{ strongly in $L^1(\Om\X\Sigma\X(-L,L))$},
	$$
	if, and only if, $\ff^\tau(\cdot,\cdot,\cdot)$ is a $\chi$-function a.e.\ in $\Om\X\Sigma\X(-L,L)$.  
\end{lemma}

We now pass to the verification that $\ff^\tau$ is indeed a $\chi$-function. 

\bigskip

\centerline{\em \# 3. The blow-up procedure.}

\bigskip

Let us keep $\cO_0$ fixed. Since $\ff^\tau$ is independent on the $\Gamma_0$-strongly regular Lipschitz deformation, we may choose the special deformation $\psi(s,\widehat{x}) = (\widehat{x}, \gamma(\widehat{x}) + s)$, which is trivially strongly regular. 
Identifying $y_d = s$ 
and $\widehat{y} = \widehat{x}$, define
$$
\widetilde{\ff}(t,y,\xi) = \ff_{\psi}(t,\widehat{y}, y_d, \xi) = \ff(t, \psi(y_d,\widehat{y}), \xi).
$$
Notice that there exists an $r_0>0$ such that $\psi(y_d,\wh y) \in \cO_0$ provided that $(\widehat{y}, y_d) \in (-r,r)^{d-1} \X (0, r_0)$. As a result, we see from \eqref{e03.4} that $\widetilde{\ff}$ is a solution to
\begin{multline}\label{3.eqftil}
\frac{\partial \widetilde \ff}{\partial t} + \widehat{\abf}(\xi) \cdot \nabla_{\widehat{y}} \wt \ff + \widetilde{a_d}(\widehat{y},\xi) \frac{\partial \widetilde{\ff}}{\partial y_d} = \frac{\partial \widetilde{\qq}}{\partial \xi} - \sum_{k=1}^\infty \frac{\partial\widetilde \ff}{\partial\xi} \widetilde{g_k} \dot{\beta_k} + \sum_{k=1}^\infty \delta_{\xi = 0} \widetilde{g_k}  \dot{\beta_k}  \\ \quad\text{a.s.\ in the sense of the distributions in $(0,T)\X(-r,r)^{d-1}\X(0,r_0).$}
\end{multline}
In the equation above, we have denoted $\abf(\xi) = (\widehat \abf(\xi), a_d(\xi))$ and
\begin{equation}
\widetilde{a}_d(\widehat{y}, \xi) = a_d(\xi) - \nabla \gamma_0(\widehat{y})\cdot\widehat{\abf}(\xi) = \lambda(\widehat{y})\abf(\xi) \cdot \nu(\widehat{y}), \label{3.eqa}
\end{equation}
for some $\lambda(\widehat{y}) < 0$, and $\nu$ is the outward unit normal, due to \eqref{e.orient}.

Moreover, we have also written $\widetilde{\qq} = \widetilde{m} - \frac{1}{2} \widetilde{G}^2 \delta_{\xi = \widetilde u(t,y)}$, where
\begin{equation*}
\begin{aligned}
&\widetilde u (t,y) = u(t,\psi(\widehat{y},y_d)) = \int_{-L}^L \widetilde{\ff}(t,y,\xi)\, d\xi, \\
&\widetilde{m}(t,y,\xi) = m(t, \psi(y), \xi), \\
&\widetilde{g}_k(y, z)  = g_k(\psi(y), z) \text{ for all $k \geq 1$}, \text{ and}\\
&\widetilde{G}^2(y,z)   = \sum_{k=1}^\infty \widetilde{g}_k(y,z)^2.
\end{aligned}
\end{equation*}

Before we rescale $\widetilde{\ff}$, let us recall some lemmas in \cite{Va} slightly adapted to our setting, to which we refer for the proofs. Let $\Om_0$ be as in the proof of Lemma~\ref{L:3.02}.
\begin{lemma} \label{3.lemma1}
        There exists  a sequence    $0<\ve_n\to 0$   and a set of total measure $\E \subset \Sigma$  such that, 
	for every $(t_0,\widehat{y}_0)\in \E$, 
	 and every $R>0$,
	\begin{align}
	&\lim\limits_{n \rightarrow \infty} \bbE \frac{1}{\ve_n^d} \widetilde{m} \Big( \Big\{ (t_0,\widehat{y}_0) + (-R\ve_n, R\ve_n)^{d} \Big\} \X (0, R\ve_n) \X [-L,L] \Big) = 0, \text{ and} \label{3.limm}\\
	&\lim\limits_{n \rightarrow \infty} \bbE \frac{1}{\ve_n^d} \int \int_{ \{ (t_0,\widehat{y}_0) + (-R\ve_n, R\ve_n)^{d} \} \X (0, R\ve_n)} \frac{1}{2} \widetilde{G^2}(y,\widetilde{u}(t,y))\, dy dt = 0. \label{3.limG}
	\end{align}
	Consequently, for every  $(t_0,\widehat{y}_0) \in \E$  and every $R>0$,
	$$
	\lim\limits_{n \rightarrow \infty} \bbE \frac{1}{\ve_n^d} |\widetilde{\qq} | \Big( \Big\{ (t_0,\widehat{y}_0) + (-R\ve_n, R\ve_n)^{d} \Big\} \X (0, R\ve_n) \X [-L,L] \Big) = 0,
	$$
	where, as usual, $|\tilde\qq|(A)$ denotes the total variation of $\tilde\qq$ on the set $A$.  
	
	Therefore,  given $(t_0,\wh y_0)\in\E$, there exists a subsequence of $\ve_n$, still denoted $\ve_n=\ve_n(t_0,\wh y_0)$, and
	a subset of total measure $\Om_1=\Om_1(t_0,\wh y_0)\subset\Om_0$, such that, for all $\om\in\Om_1$,
	  \begin{align}
	&\lim\limits_{n \rightarrow \infty}  \frac{1}{\ve_n^d} \widetilde{m} \Big( \Big\{ (t_0,\widehat{y}_0) + (-R\ve_n, R\ve_n)^{d} \Big\} \X (0, R\ve_n) \X [-L,L] \Big) = 0, \label{3.limm'}\\
	&\lim\limits_{n \rightarrow \infty}  \frac{1}{\ve_n^d} \int \int_{ \{ (t_0,\widehat{y}_0) + (-R\ve_n, R\ve_n)^{d} \} \X (0, R\ve_n)} \frac{1}{2} \widetilde{G^2}(y,\widetilde{u}(t,y))\, dy dt = 0, \label{3.limG'}\\
	&\lim\limits_{n \rightarrow \infty}  \frac{1}{\ve_n^d} |\widetilde{\qq} | \Big( \Big\{ (t_0,\widehat{y}_0) + (-R\ve_n, R\ve_n)^{d} \Big\} \X (0, R\ve_n) \X [-L,L] \Big) = 0.\label{3.qq}
	\end{align}
\end{lemma}

\begin{lemma} \label{3.lemma2} 
	There exists a subsequence of $\ve_n$, still denoted by $\ve_n$, and  a subset  of  $\E\subset \Sigma$,  also of total measure and still denoted by $\E$,  such that, for every $(t_0, \widehat{y}_0) \in \E$,
	 every $R > 0$, and every $1 \leq p < \infty$,
	\begin{align}
	& \int_{-R}^R \int_{(-R,R)^{d-1}} \int_{-L}^L |\wt a_d(t_0,\widehat{y}_0, \xi) - \wt a_d(t_0+\ve_n \ul t, \widehat{y}_0 + \ve_n\widehat{\ul y}, \xi) |^p \, d\xi \, d\widehat{\ul y} d\ul t \to 0, \label{3.lima}  \\
	& \bbE \int_{-R}^R \int_{(-R,R)^{d-1}} \int_{-L}^L |\ff^\tau(t_0 + \ve_n \ul t\, , \widehat{y}_0 + \ve_n \widehat{\ul y} \,,  \xi) - \ff^\tau(t_0,\widehat{y}_0, \xi) | \, d\xi \,d\widehat{\ul y}\, d\ul t \to 0,\label{3.limfb}
	\end{align}
	as $n \to \infty$. 
	
	Again, it follows that, given $(t_0,\wh y_0)\in\E$ there exists a subsequence of $\ve_n(t_0,\wh y_0)$ also denoted $\ve_n=\ve_n(t_0,\wh y_0)$, and a subset of $\Om_1(t_0,\wh y_0)$, also of total measure, and
	also denoted $\Om_1=\Om_1(t_0,y_0)$, such that, for all $\om\in\Om_1$,
	\begin{equation}
	   \int_{-R}^R \int_{(-R,R)^{d-1}} \int_{-L}^L |\ff^\tau(t_0 + \ve_n \ul t\, , \widehat{y}_0 + \ve_n \widehat{\ul y} \,,  \xi) - \ff^\tau(t_0,\widehat{y}_0, \xi) | \, d\xi \,d\widehat{\ul y}\, d\ul t \to 0. \label{3.limfb'}
	\end{equation}
\end{lemma}

\bigskip

 Let   $(t_0, \widehat{y_0}) \in \E$, which will be kept fixed until the end of the proof.  Our goal now is to show that $\ff^\tau(t_0,\wh y_0, \xi)$ is a $\chi$-function. 
 
 Let  $R = R(t_0,\widehat{y}_0)$ be the least number between $r$, $r_0$, $T - t_0$ and $t_0$. Let us denote $y^0:=(\widehat{y}_0,0)$. For any $\ve > 0$, consider
\begin{equation}
\widetilde{\ff}_\ve(\underline t, \underline y, \xi) = \widetilde{\ff}(t_0+ \ve \underline t, y^0+ \ve \underline y, \xi), \label{trace.chive}
\end{equation}
for $\omega \in \Omega$, $-L < \xi < L$, and  
$$
(\underline t, \underline y) = (\underline t, \underline{\widehat{y}}, \underline{y_d})  \in (-R/ \ve, R/ \ve ) \X (-R/\ve, R/\ve)^{d-1} \X (0,R/\ve)\stackrel{\text{def}}{=} Q_\ve .
$$
Cearly, $\widetilde{\ff}_\ve$ depends on $(t_0, \widehat{y}_0)$. However, since this point will be fixed henceforth,  we will omit this dependence.

Each $\widetilde{\ff}_\ve$ is still a $\chi$-function, and, in the sense of weak traces, 
\begin{equation}
\widetilde{\ff}_\ve(\ul{t}, \wh {\ul y}, 0, \xi) = \ff^\tau(t_0+\ve\ul t,  \widehat{y}_0 + \ve \wh{\ul y}, \xi), \label{3.chive2} 
\end{equation} 
for $-L<\xi<L$ and 
$$
(\ul t, \ul{\wh y}) \in (-R/\ve, R/\ve)\X(-R/\ve , R/\ve)^{d-1} \stackrel{\text{def}}{=} \Sigma_\ve.
$$

If $X(t)$ is a predictable  stochastic process, we denote $\int_a^b X(t)\, d\b_k(t):=\int_0^b X(t)\, d\b_k(t)-\int_0^a X(t)\,d\b_k(t)$, for all $k\in\N$. Observe that $\int_a^b X(t)\,d\b_k(t)=-\int_b^a X(t)\,d\b_k(t)$.

{}From  \eqref{3.eqftil} we get that $\wt \ff_\ve$ satisfies  the equation
\begin{multline}\label{3.eqfve0}
\frac{\partial \widetilde \ff_\ve}{\partial \ul t} + \widehat{\abf}(\xi) \cdot \nabla_{\widehat{\ul y}} \widetilde \ff_\ve + \widetilde{a_d}(\widehat{y}_0 + \ve \ul {\wh y}, \xi) \frac{\partial \widetilde \ff_\ve}{\partial \ul y_d} = 
\frac{\partial \widetilde{\qq}_\ve}{\partial \xi} \\
- \sum_{k=1}^\infty  \frac{\po}{\po \ul t} \left(\int_{t_0}^{t_0+\ve\ul t} \frac{\po \wt \ff}{\po \xi}(t,y^0+\ve \ul y, \xi)\widetilde{g}_{k,\ve}(\ul y,\xi)\, d\b_k(t)\right) 
+ \sum_{k=1}^\infty \d_{\xi=0}  \frac{\po}{\po \ul t}   \left(\int_{t_0}^{t_0+\ve\ul t}  \widetilde{g}_{k, \ve} \,d\b_k(t)\right), 
\end{multline}
where we use the notations 
$$
\begin{aligned}
&\wt g_{k,\ve}(\ul y,\xi):= \wt g_k(y^0+\ve \ul y, \xi),  \\
%& \frac{\partial\widetilde \ff_\ve}{\partial\xi} \widetilde{g}_{k,\ve} \dot{\widetilde{\beta}}_{k,\ve} :=\frac{\po}{\po \ul t} \left(\int_{t_0}^{t_0+\ve\ul t} \frac{\po \wt \ff}{\po \xi}(t,y^0+\ve \ul y, \xi)g_{k,\ve}(\ul y,\xi)\, d\b_k(t)\right), \\
%&\delta_{\xi = 0} \widetilde{g}_{k, \ve}  \dot{\widetilde{\beta}}_{k,\ve}:=\frac{\po}{\po \ul t} \left(\int_{t_0}^{t_0+\ve\ul t} \d_{\xi=0}   \widetilde{g}_{k, \ve} \,d\b_k(t)\right), \\
&\widetilde{\qq}_\ve :=\widetilde{m}_\ve - \widetilde{G}_\ve^2(\ul y,\xi)\d_{\wt u_\ve(\ul t,\ul y) },\\
&\wt u_\ve(\ul t, \ul y) := \int_{-L}^L \wt \ff_\ve(\ul t, \ul y, \xi) \, d\xi = \wt u(t _0+ \ve \ul t, y^0 + \ve \ul y), \\
&\widetilde{G}^2_{\ve}(\ul y, \xi) := \sum_{k=1}^\infty \wt g_{k,\ve}^2( \ul y, \xi)=  \wt G^2(y^0 + \ve \ul y, \xi).\\
\end{aligned}
$$
Regarding  $\widetilde{m}_\ve$,  it is, almost surely, the measure such that, for every $a_0<b_0$, $\ldots$, $a_d < b_d$ and $L_1 < L_2$,
\begin{multline*}
\widetilde{m}_\ve \Big( \operatornamewithlimits{\Pi}_{j=0}^d [a_j, b_j] \X [L_1, L_2]  \Big) \\ = \frac{1}{\ve^d} \widetilde{m} \Big( [t_0 + \ve a_0, t_0 + \ve b_0] \X \Big[ \widehat{y}_0 + \operatornamewithlimits{\Pi}_{j=1}^{d-1} [\ve a_j, \ve b_j] \Big]\X [\ve a_d, \ve b_d] \X [L_1, L_2]  \Big).
\end{multline*}
Therefore, also, almost surely,  for every $a_0<b_0$, $\ldots$, $a_d < b_d$ and $L_1 < L_2$, we have
\begin{multline*}
\widetilde{\qq}_\ve \Big( \operatornamewithlimits{\Pi}_{j=0}^d [a_j, b_j] \X [L_1, L_2]  \Big) \\ = \frac{1}{\ve^d} \widetilde{\qq} \Big( [t_0 + [\ve a_0  , \ve b_0]] \X \Big[ \widehat{y}_0 + \operatornamewithlimits{\Pi}_{j=1}^{d-1} [\ve a_j, \ve b_j] \Big]\X [\ve a_d, \ve b_d] \X [L_1, L_2]  \Big).
\end{multline*}

Equation \eqref{3.eqfve0} can also be written in the following sometimes more convenient  form 
\begin{multline}\label{3.eqfve}
\frac{\partial \widetilde \ff_\ve}{\partial \ul t} + \widehat{\abf}(\xi) \cdot \nabla_{\widehat{\ul y}} \widetilde \ff_\ve + \widetilde{a_d}(\widehat{y}_0, \xi) \frac{\partial \widetilde \ff_\ve}{\partial \ul y_d} 
= \frac{\partial}{\partial \ul y_d} \Big( \big(\widetilde{a_d}(\widehat{y}_0, \xi) - \widetilde{a_d}(\widehat{y}_0 
+ \ve \widehat{\ul y}, \xi) \big) \widetilde \ff_\ve \Big) \\
 +  \frac{\partial \widetilde{\qq}_\ve}{\partial \xi} 
- \sum_{k=1}^\infty \frac{\po^2}{\po \ul t\po \xi} \left(\int_{t_0}^{t_0+\ve\ul t} 
 \widetilde{g}_{k,\ve}(\ul y,\xi) \wt \ff(t,y^0+\ve \ul y, \xi ) \, d{\beta}_{k}(t)\right)
\\ + \sum_{k=1}^\infty \frac{\po}{\po \ul t} \left(\int_{t_0}^{t_0+\ve\ul t} 
\frac{\partial \widetilde{g}_{k,\ve}}{\po \xi}(\ul y,\xi) \wt \ff(t,y^0+\ve \ul y, \xi \big) \,d\beta_{k}(t)\right)
\\
+ \sum_{k=1}^\infty  \d_{\xi=0} \frac{\po}{\po \ul t}  \left(\int_{t_0}^{t_0+\ve\ul t}  \widetilde{g}_{k, \ve} \,d\b_k(t)\right).
\end{multline}

In what follows, motivated by \cite{Va}, we are going to prove that $\ff^\tau$ is a $\chi$-function  by proving that, along a suitable subsequence,    
$\wt \ff_\ve\to \ff^\tau(t_0,\wh{y}_0, \cdot )$ in $L_\loc^1(\R^d\X(0,\infty)\X\R)$, for all $\om$ in a subset of total measure of $\Om$.  Here the subsequence and the subset of $\Om$ will depend, in general, on $(t_0,\wh y_0)$, 
as opposed to the deterministic case  in \cite{Va}, where the subsequence does not depend on $(t_0,\wh y_0)$. Nevertheless, this dependence does not have any effect in the conclusion. 
More specifically, keeping the notation in Lemma~\ref{3.lemma1} and Lemma~\ref{3.lemma2}, we will  first obtain a  set of total measure $\Om_2(t_0,\wh y_0)\subset \Om_1(t_0,\wh y_0)$ and a subsequence of $\ve_n(t_0,\wh y_0)$, also denoted $\ve_n(t_0,\wh y_0)$,  so that for each $\om\in\Om_2$, 
$\wt \ff_{\ve_n}\wto \ff^\tau(t_0,\wh y_0, \cdot )$
in the sense of the distributions on $\R^d\X(0,\infty)\X\R$. Then, we will obtain another subset of total measure $\Om_3(t_0,\wh y_0)\subset  \Om_2(t_0,\wh y_0)$ and a subsequence $\ve_{n_k}(t_0,\wh y_0)$ of 
$\ve_n(t_0,\wh y_0)$ 
such that, for any $\om\in\Om_3(t_0,\wh y_0)$,      $\wt \ff_{\ve_{n_k}}\to \ff^\tau(t_0,\wh y_0, \cdot)$ in $L_\loc^1(\R^d\X(0,\infty)\X\R)$.

\begin{lemma}\label{L:3.03} There exists a subset of total measure $\Om_2(t_0,\wh y_0)\subset \Om_1(t_0, \wh y_0)$ and a sequence $\ve_n=\ve_n(t_0,\wh y_0)\to0$, such that for all $\om\in\Om_2(t_0,\wh y_0)$, $\wt \ff_{\ve_n}\wto \ff^\tau(t_0,\wh y_0,\cdot )$ in the sense of the distributions on $\R^d\X(0,\infty)\X\R$. 
\end{lemma} 

\begin{proof} Let  $\frak D\subset  C_c^\infty(\R^d\X(0,\infty)\X(-L,L))$ be countable and dense in $C_c^2(\R^d\X(0,\infty)\X(-L,L))$.  Let $\varphi\in \frak D$  and let 
$\Lambda_{\ve}$ denote the distribution corresponding to the stochastic Wiener processes 
in \eqref{3.eqfve0}.  That is,
\begin{multline*}
\Lambda_{\ve}:=- \sum_{k=1}^\infty  \frac{\po}{\po \ul t} \left(\int_{t_0}^{t_0+\ve\ul t} \frac{\po \wt \ff}{\po \xi}(t,y^0+\ve \ul y, \xi)\wt g_{k,\ve}(\ul y,\xi)\, d\b_k(t)\right) \\
+ \sum_{k=1}^\infty  \d_{\xi=0} \frac{\po}{\po \ul t}   \left(\int_{t_0}^{t_0+\ve\ul t}  \widetilde{g}_{k, \ve} \,d\b_k(t)\right).
\end{multline*}
It is not hard to verify that, for $\ve$ sufficiently small,
$$
\la \Lambda_{\ve} , \varphi\ra= -\sum_{k=1}^\infty \int_\R \int_{\R_+^{d}}\int_{t_0}^{t_0+\ve \ul t} \wt{g}_{k,\ve}(\ul y,\wt u(t, \wh{y}_0+\ve\ul y))\varphi_{\ul t} (\ul t,\ul y, \wt u(t,\wh{y}_0+\ve \ul y))d\b_k(t) \, d\ul y\,d \ul t,
$$
where we denote $\R_+^d=\R^{d-1}\X(0,\infty)$.  We have
\begin{multline*}
\bbE |\la \Lambda_{t_0,\wh y_0,\ve}, \varphi\ra| \\
\le \bbE \int_\R \int_{\R_+^{d}}\left| \sum_{k=1}^\infty  \int_{t_0}^{t_0+\ve \ul t} \wt{g}_{k,\ve}(\ul y,\wt u(t, \wh{y}_0+\ve\ul y))\varphi_{\ul t} (\ul t,\ul y, \wt u(t,\wh{y}_0+\ve \ul y))d\b_k(t)\right|  \, d\ul y\,d \ul t\\
\le \bbE \int_\R \int_{\R_+^{d}}\left( \sum_{k=1}^\infty \left|\int_{t_0}^{t_0+\ve \ul t} \big|\wt{g}_{k,\ve}(\ul y,\wt u(t, \wh{y}_0+\ve\ul y))\varphi_{\ul t} (\ul t,\ul y, \wt u(t,\wh{y}_0+\ve \ul y)) \big|^2\,dt \right|\right)^{1/2}   \, d\ul y\,d \ul t\\
\le \bbE \int_\R \int_{\R_+^{d}}\sup_\xi |\varphi_{\ul t}(\ul t,\ul y,\xi)| \left( \sum_{k=1}^\infty \left| \int_{t_0}^{t_0+\ve \ul t} \big|\wt{g}_{k,\ve}(\ul y,\wt u(t, \wh{y}_0+\ve\ul y)) \big|^2\,dt\right| \right)^{1/2}   \, d\ul y\,d \ul t\\
\le  \bbE \int_\R \int_{\R_+^{d}}\sup_\xi |\varphi_{\ul t}(\ul t,\ul y,\xi)| \left( D \left| \int_{t_0}^{t_0+\ve \ul t} (1+ | \wt u(t, \wh{y}_0+\ve\ul y)|^2)\,dt\right| \right)^{1/2}   \, d\ul y\,d \ul t\\
\le D^{1/2} \text{diam}\,\{\supp \varphi_{\ul t} \}^{d+1} \|\varphi_{\ul t}\|_\infty (1+\|u\|_\infty^2)^{1/2} \ve^{1/2}\to 0\quad\text{as $\ve\to0$},
\end{multline*} 
where we have used Burkholder inequality (see, e.g., \cite{O}) in the second inequality above, and \eqref{e1.4} in the fourth inequality  above. 
Therefore, using a diagonal process, we can obtain a subsequence of $\ve_n(t_0,\wh y_0)$, obtained from Lemma~\ref{3.lemma1} and Lemma~\ref{3.lemma2},
 which we still denote $\ve_n(t_0,\wh y_0)$ and  a set of total measure $\Om_2(t_0,\wh y_0)\subset \Om_1(t_0,\wh y_0)$ such that, for all $\om\in\Om_2$,  
$$
|\la \Lambda_{\ve_n}, \varphi\ra| \to 0,\quad \text{for all $\varphi\in C_c^\infty(\R^d\X(0,\infty)\X(-L,L))$}.
$$ 

Now, let us fix $\om\in\Om_2(t_0,\wh y_0)$. Let $\varphi\in C_c^\infty(\R^d\X(0,\infty)\X(-L,L))$ be of the form $\varphi=\rho_h(\ul y_d)\tilde\varphi$, with $\tilde \varphi\in C_c^\infty(\R^d\X[0,\infty)\X(-L,L))$  and
$\rho_h(\ul y_d)=\int_0^{\ul y_d}\z_h(s)\,ds$, where $\z_h(s)=h^{-1}\z(h^{-1}s)$ and $\z\in C_c^\infty((0,1))$, with $\z\ge0$ and $\int_0^1 \z(s)\,ds=1$.  Taking $\varphi$
of this form as a test function in \eqref{3.eqfve0} and using Lemma~\ref{L:3.02} we get, after letting $h\to0$, that
\begin{multline}\label{3.eqfve2}
\int_{\R}\int_0^\infty\int_{\R^{d-1}}\int_\R  \widetilde \ff_\ve \frac{\po \wt \varphi}{\partial \ul t} +  \widetilde{\ff}_\ve\, \wh {\abf}(\xi) \cdot  \nabla_{\widehat{\ul y}}\wt \varphi
+\wt\ff_\ve \,  \widetilde{a_d}(\widehat{y}_0 + \ve \ul {\wh y}, \xi) \frac{\po\wt \varphi}{\po\ul y_d}\,d\ul t\, d\wh {\ul y}\,d\ul y_d\,d\xi \\ +\int_{\R}\int_{\R^{d-1}}\int_\R   \widetilde{a_d}(\widehat{y} + \ve \ul {\wh y}, \xi) \ff^\tau((t_0, \widehat{y}_0 + \ve \ul {\wh y}, \xi) 
\wt \varphi (\ul t, \wh{\ul y},0,\xi)\, d\ul t\,d\wh{\ul y}\, d\xi\\
=\la \wt \qq_\ve,\frac{\po\wt\varphi}{\po \xi}\ra-\la\Lambda_\ve, \wt\varphi\ra.
\end{multline}
Taking $\ve=\ve_n(t_0,\wh y_0)$, passing to a subsequence of $\ve_n(t_0,\wh y_0)$ if necessary so that $\wt \ff_{\ve_n}\wto \wt\ff$ in the weak-$\star$ topology of $L^\infty(\R^d\X(0,\infty)\X(-L,L)))$, 
for some $\wt \ff \in L^\infty(\R^d\X(0,\infty)\X(-L,L))$, and making $\ve_n(\om)\to0$ we get for $\wt\ff$,
\begin{multline}\label{3.eqfve3} 
\int_{(-L,L)} \int_{\R_+^d}\int_\R  \widetilde \ff \,\frac{\po \wt \varphi}{\partial \ul t} +  \widetilde{\ff}\, \wh {\abf}(\xi) \cdot  \nabla_{\widehat{\ul y}}\wt \varphi
+\wt\ff \,  \widetilde{a_d}(\widehat{y}_0, \xi) \frac{\po\wt \varphi}{\po\ul y_d }\,d\ul t \,d\ul y \,d\xi \\ +\int_{\R} \int_{\R^{d-1}} \int_\R  \widetilde{a_d}(\widehat{y}_0, \xi)\, \ff^\tau(t,\wh y, \xi) \wt \varphi (\ul t, \wh{ \ul y},0,\xi)\,d\ul t \,d\wh{\ul y}\, d\xi=0. 
\end{multline}
Now, since  $\wt \ff$ and $\ff^\tau$ vanish for $\xi\notin (-L,L)$ and $a_d(\wh y_0,\xi)\neq 0$, for a.e.\ $\xi\in(-L,L)$, by choosing $\tilde \varphi(\ul t, \ul y, \xi)=\rho(\xi)\tilde \phi(\ul t,\ul y)$, with $\rho\in C_c^\infty(\R)$, $ \tilde\phi\in C_c^\infty(\R\X\R^{d-1}\X[0,\infty))$,
we get that for almost every $\xi$, $\wt \ff(\cdot,\cdot,\xi)$ satisfies
 \begin{multline}\label{3.eqfve4} 
 \int_{\R_+^d}\int_\R  \widetilde \ff \,\frac{\po \wt \varphi}{\partial \ul t} +  \widetilde{\ff}\, \wh {\abf}(\xi) \cdot  \nabla_{\widehat{\ul y}}\wt \varphi
+\wt\ff \,  \widetilde{a_d}(\widehat{y}_0, \xi) \frac{\po\wt \varphi}{\po\ul y_d }\,d\ul t \,d\ul y  \\ + \int_{\R^{d-1}} \int_\R  \widetilde{a_d}(\widehat{y}_0, \xi)\, \ff^\tau(t_0,\wh y_0, \xi) \wt \varphi (\ul t, \wh{ \ul y},0,\xi)\,d\ul t \,d\wh{\ul y} =0. 
\end{multline}  
Now, if $\xi\in S_{\text{nd}}^+:=\{\z\,:\, a_d(\wh y_0,\z)>0\}$, making the change of coordinates 
$$
\ul y_d= a_d(\wh y_0,\xi)\, \ul z_d,\quad  \ul t  =\ul \tau+ \ul y_d,\quad \wh{\ul y}= \wh{\ul z} + \ul z_d\, \wh a(\xi),
$$   
we get that, in this new system of coordinates, $\wt \ff$ satisfies
$$
\int_{\R_+^d} \int_\R \wt \ff \frac{\po \wt \phi}{\po \ul z_d}\, d\ul \tau\, d\wh{\ul z}\, d\ul z_d + \int_{\R^{d-1}} \int_\R   \ff^\tau(t_0,\wh y_0, \xi) \phi (\ul t, \wh{ \ul z},0)\,d\ul \tau \,d\wh{\ul z} =0,
$$
for all $\tilde \phi\in C_c^\infty(\R^d\X[0,\infty))$. Hence, choosing $\wt \phi$ of the form $\wt \phi(\ul t, \wh{\ul z}, \ul z_d)=\theta(\ul t,\wh{\ul z})\z_h(\ul z_d)$, for a suitable sequence $\z_h\in C_c^\infty ([0,\infty))$, 
with $h\to0+$,  we then easily conclude that 
$$
\wt \ff(\ul \tau, \wh {\ul z}, \ul z_d,\xi)= \ff^\tau(t_0, \wh y_0,\xi), \qquad\text{for a.e.\ $(\ul \tau,\wh{\ul z}, \ul z_d)\in \R\X\R^{d-1}\X(0,\infty)$}.
$$  
Coming back to the original coordinates $(\ul t,\wh{\ul y}, \ul y_d)$, we arrive at the asserted conclusion. Finally, if $\xi\in S_{\text{nd}}^-:=\{\z\,:\, a_d(\wh y_0,\z)<0\}$, the same procedure, with minor modifications, yields the conclusion of the lemma. The only difference is that in this case the equation corresponding to the new set of variables the time $\tau$ evolves in the opposite orientation, which does not pose any inconveniences.
\end{proof}

\bigskip
\centerline{\# 4. Convergence a.s.\ in $L_\loc^1(\R\X\R_+^d\X\R)$.}
\bigskip

Let us denote $\wt {\abf}_{\wh y_0}(\xi):= (\wh a(\xi), \wt a_d(\wh y_0,\xi))$. Regarding the modified symbol 
$$
\cL_{\wh y_0}(i\tau, i\k, \xi) = i(\tau + \wt {\abf}_{\wh y_0}(\xi)\cdot \k),
$$ 
we have the following proposition.

\begin{proposition} \label{3.propL}
		   Condition \eqref{e1.8} implies that $\cL_{\wh y}(i\tau, i\k, \xi)$ satisfies the following nondegeneracy condition: For $\d>0$, 
		\begin{equation}
		\lim_{\d\to0} \sup_{\tau^2+|\k|^2 = 1} \LL^1 \big\{ \xi \in [-L,L] \,:\, |\cL_{\wh y_0}(i\tau,i\kappa,\xi)| < \d \big\} =0. \label{3.nondeg}
		\end{equation}
\end{proposition}
\begin{proof} Condition \eqref{e1.8} clearly implies that, for all $(\tau,\k)$, with $\tau^2+|\k|^2=1$,   	
	\begin{equation}
		 \LL^1 \big\{ \xi \in [-L,L]\,:\, |\cL_{\wh y_0}(i\tau,i\kappa,\xi)| =0 \big\} =0. \label{3.nondeg1}
		\end{equation}
Now, consider the functions  $\varpi_\d:\mathbb{S}^{d+1}\to[0,\infty)$, defined by
$$
\varpi_\d(\tau,\k):= \LL^1 \big\{ \xi \in [-L,L]\,:\,  |\cL_{\wh y_0}(i\tau,i\kappa,\xi)| < \d \big\}.
$$
The functions $\varpi_\d$ are continuous on the compact $\mathbb{S}^{d+1}$  and 	$\varpi_{\d_1}(\tau,\k)\le \varpi_{\d_2}(\tau,\k)$, if $\d_1\le \d_2$.
Then, Dini theorem implies that $\varpi_\d\to0$ uniformly.
	
\end{proof}

We thus arrive at the decisive step of the proof of Theorem~\ref{T:3.1}.  Let $\ve_n(t_0,\wh y_0)$  and $\Om_2(t_0,\wh y_0)\subset \Om_1(t_0,\wh y_0)$ be the sequence and the subset of total measure obtained in Lemma~\ref{L:3.03}. 

\begin{lemma}\label{L:3.04} There is a subset of total measure $\Om_3(t_0,\wh y_0)\subset \Om_2(t_0,\wh y_0)$ and a subsequence of $\ve_n(t_0,\wh y_0)$ still denoted $\ve_n$, such that for all $\om\in\Om_3$, 
$\wt \ff_{\ve_n}\to \ff^\tau(t_0,\wh y_0, \xi)$ in $L_\loc^1(\R^d\X(0,\infty)\X(-L,L))$. In particular,  $ \ff^\tau(t_0,\wh y_0, \xi)$ is a $\chi$-function a.e.\ in $\Om\X(-L,L)$.  
\end{lemma}

\begin{proof} {\em Step 1:} For $(t_0,\wh y_0)\in\E$ and $\om\in\Om_1$,  let  $\wt \ff(\ul t,\ul y,\xi)=\ff^\tau(t_0,\wh y_0,\xi)$. We observe that $\wt \ff_\ve$ and $\wt \ff$ are supported in $|\xi|\le L$. 
	Let $V \subset\subset \R\X\R^{d-1}\X(0,\infty)$. 
	We consider $\varphi, \theta \in C_c^\infty(\R\X\R^{d-1}\X(0,\infty))$ such that
	\begin{enumerate}
		\item $0 \leq \varphi \leq \theta \leq 1$ everywhere,
		\item $\varphi \equiv 1$ in $V$, and
		\item $\theta \equiv 1$ in the support of $\varphi$.
	\end{enumerate}
	In this case, from \eqref{3.eqfve}, we see that $(\varphi \wt \ff_{\ve}) = \wt \ff_{\ve}^\varphi$ satisfies for $\ve$ sufficiently small
	\begin{equation} \label{5.eqfvephi0}
	\begin{aligned}
	& \frac{\partial \wt \ff_{\ve}^\varphi}{\partial \ul t} + \widetilde{a}_{\widehat{y}_0}(\xi) \cdot \nabla \wt \ff_{\ve}^\varphi  = \wt \ff_{\ve}\Bigg(\frac{\partial \varphi}{\partial t} - \wt a_{\wh y_0}(\xi) \cdot \nabla \varphi \Bigg)  \\
	&\qquad+\frac{\partial}{\partial \ul y_d} \Big( \big(\widetilde{a_d}(\widehat{y}_0, \xi) - \widetilde{a_d}(\widehat{y}_0 + \ve \widehat{\ul y}, \xi) \big) \widetilde \ff_{\ve}^\varphi \Big) \\ 
	 							&\qquad- \wt \ff_{\ve}\frac{\partial}{\partial \ul y_d} \Big( \big(\widetilde{a_d}(\widehat{y}_0, \xi) - \widetilde{a_d}(\widehat{y} _0+ \ve \widehat{\ul y}, \xi) \big) \varphi \Big)
								+ \frac{\partial}{\partial \xi} \big(\varphi \widetilde{\qq}_{\ve} \big)  \\
& \qquad  - \sum_{k=1}^\infty \frac{\po^2}{\po \ul t\po \xi} \left(\varphi \int_{t_0}^{t_0+\ve\ul t} 
 \widetilde{g}_{k,\ve}(\ul y,\xi) \wt \ff(t,y^0+\ve \ul y, \xi ) \, d\beta_{k}(t)\right)
\\ 
&\qquad + \sum_{k=1}^\infty \frac{\po}{ \po \xi} \left(\varphi _{\ul t} \int_{t_0}^{t_0+\ve\ul t} 
 \widetilde{g}_{k,\ve}(\ul y,\xi) \wt \ff(t,y^0+\ve \ul y, \xi ) \, d\beta_{k}(t)\right)\\
&\qquad+ \sum_{k=1}^\infty \frac{\po}{\po \ul t} \left(\varphi \int_{t_0}^{t_0+\ve\ul t} 
\frac{\partial \widetilde{g}_{k,\ve}}{\po \xi}(\ul y,\xi) \wt \ff(t,y^0+\ve \ul y, \xi \big) \,d\beta_{k}(t)\right)
\\
&\qquad- \sum_{k=1}^\infty \left(\varphi_{\ul t} \int_{t_0}^{t_0+\ve\ul t} 
\frac{\partial \widetilde{g}_{k,\ve}}{\po \xi}(\ul y,\xi) \wt \ff(t,y^0+\ve \ul y, \xi \big) \,d\beta_{k}(t)\right)\\
&\qquad+ \sum_{k=1}^\infty  \d_{\xi=0} \frac{\po}{\po \ul t}  \left( \varphi \int_{t_0}^{t_0+\ve\ul t}  \widetilde{g}_{k, \ve} \,d\b_k(t)\right)\\
&\qquad-\sum_{k=1}^\infty  \d_{\xi=0}  \left( \varphi_{\ul t}\int_{t_0}^{t_0+\ve\ul t}  \widetilde{g}_{k, \ve} \,d\b_k(t)\right).									
	 \end{aligned}
	 \end{equation}
Let us denote
$$
\begin{aligned}
&\ell_\ve^{(1)} (\ul t,\ul y,\xi):= \sum_{k=1}^\infty \int_{t_0}^{t_0+\ve \ul t} \wt g_{k,\ve}(\ul y,\xi)\wt \ff (t,y^0+\ve\ul y,\xi)\, d\b_k(t),\\
& \ell_\ve^{(2)} (\ul t,\ul y,\xi):= \sum_{k=1}^\infty \int_{t_0}^{t_0+\ve \ul t} \frac{\po\wt g_{k,\ve}}{\po \xi}(\ul y,\xi)\wt \ff(t,y^0+\ve\ul y,\xi)\, d\b_k(t),\\
& \ell_\ve^{(3)}(\ul t,\ul y) := \sum_{k=1}^\infty \int_{t_0}^{t_0+\ve \ul t} \wt g_{k,\ve}(\ul y,0)\, d\b_k(t).
\end{aligned}
$$
So, \eqref{5.eqfvephi0} can be written in a more concise way  as
\begin{equation} \label{e3.26-05-20-1}
	\begin{aligned}
	& \frac{\partial \wt \ff_{\ve}^\varphi}{\partial \ul t} + \widetilde{a}_{\widehat{y}_0}(\xi) \cdot \nabla \wt \ff_{\ve}^\varphi  = \wt \ff_{\ve}\Bigg(\frac{\partial \varphi}{\partial t} - \wt a_{\wh y_0}(\xi) \cdot \nabla \varphi \Bigg)  \\
	&\qquad+\frac{\partial}{\partial \ul y_d} \Big( \big(\widetilde{a_d}(\widehat{y}_0, \xi) - \widetilde{a_d}(\widehat{y}_0 + \ve \widehat{\ul y}, \xi) \big) \widetilde \ff_{\ve}^\varphi \Big) \\ 
	 							&\qquad- \wt \ff_{\ve}\frac{\partial}{\partial \ul y_d} \Big( \big(\widetilde{a_d}(\widehat{y}_0, \xi) - \widetilde{a_d}(\widehat{y} _0+ \ve \widehat{\ul y}, \xi) \big) \varphi \Big)
								+ \frac{\partial}{\partial \xi} \big(\varphi \widetilde{\qq}_{\ve} \big)  \\
&   \qquad- \frac{\po^2}{\po \ul t\po \xi} (\varphi \ell_\ve^{(1)}) +\frac{\po}{\po\xi}(\varphi_{\ul t}\ell_\ve^{(1)})+  \frac{\po}{\po \ul t} (\varphi \ell_\ve^{(2)}) -\varphi_{\ul t}\ell_\ve^{(2)} \\
&\qquad+ \d_{\xi=0}\frac{\po}{\po\ul t} (\varphi \ell_\ve^{(3)})-\d_{\xi=0}\varphi_{\ul t}\ell_\ve^{(3)}.								
	 \end{aligned}
	 \end{equation}

	 On the other hand, from \eqref{3.eqfve3},    $(\varphi\wt\ff) = \wt\ff^\varphi$ verifies
	 \begin{equation}
	 \frac{\partial \wt \ff^\varphi}{\partial \ul t} + a_{\widehat{y}}(\xi)\cdot \nabla_{\ul y} \wt \ff^\varphi =\wt \ff\Bigg(\frac{\partial \varphi}{\partial \ul t} + a_{\widehat{y}}(\xi). \nabla_{\ul y} \varphi\Bigg) . \label{5.eqfphi}
	 \end{equation}
	 
{\em Step 2: }	Let $\Om_2(t_0,\wh y_0)$ and $\ve_n(t_0,\wh y_0)$ be the set of total measure and the subsequence obtained in Lemma~\ref{L:3.03}. We claim that, for any bounded open set $V\subset\subset \R^d\X(0,\infty)$, there is a set of total measure 
$\Om_3(t_0,\wh y_0)\subset \Om_2(t_0,\wh y_0)$ and a subsequence of $\ve_n((t_0,\wh y_0)$, also denoted $\ve_n(t_0,\wh y_0)$,  such that, for all  $\om\in\Om_2$, $\ell_{\ve_n}^{(1)}, \ell_{\ve_n}^{(2)}\in L^2(V\X(-L,L))$,
	 $\ell_{\ve_n}^{(3)}\in L^2(V)$, and $\ell_{\ve_n}^{(1)}, \ell_{\ve_n}^{(2)}\to  0$ in $L^2(V\X(-L,L))$ and $\ell_{\ve_n}^{(3)}\to 0$ in $L^2(V)$ as $n\to\infty$.   
	 Moreover, by a standard diagonal argument, we can find a set of total measure $\Om_3(t_0,\wh y_0)\subset\Om_2(t_0,\wh y_0)$ and a subsequence of $\ve_n(t_0,\wh y_0)$, also denotes $\ve_n(t_0,\wh y_0)$ such that the assertion is true for any $V\subset\subset \R^d\X(0,\infty)$.
	 
	 \bigskip
	  Indeed, it suffices to prove the assertion for $\ell_\ve^{(1)}$ since the proof for the others is similar. By It\^o isometry, we have
	  \begin{multline*}
	  \bbE \int_{-L}^L\int_V |\ell_{\ve}^{(1)}|^2 \,d\ul t\,d\ul y\, d\xi \\ 
	  =\bbE\int_{-L}^L\int_V\sum_{k=1}^\infty\left| \int_{t_0}^{t_0+\ve\ul t} | \wt g_{k,\ve}(\ul y,\xi)|^2 |\wt \ff (t,y^0+\ve\ul y,\xi)|^2\, dt\right| \,d\ul t\,d\ul y\,d\xi\\
	  \le \bbE\int_{-L}^L\int_V \left| \int_{t_0}^{t_0+\ve\ul t} D(1+|\xi|^2)\, dt\right| \,d\ul t\,d\ul y\,d\x\\
	  \le C(V,L, D)\ve\to 0,\quad \text{as $\ve\to0$}. 
	 \end{multline*}
	 Therefore, making $\ve=\ve_n(t_0,\wh y_0)$, we deduce that we can obtain a set of total measure $\Om_3(t_0,\wh y_0)\subset \Om_2(t_0,\wh y_0)$ , and a subsequence of $\ve_n(t_0,\wh y_0)$ also
	 denoted $\ve_n(t_0,\wh y_0)$ such that the claim for $\ell_\ve^{(1)}$ holds. The proof for $\ell_\ve^{(2)}, \ell_\ve^{(3)}$ follows the same lines and that the claim holds for all $V\subset\subset \R_+^d$ follows trivially
	 by a standard diagonal argument.  
	 
	 \bigskip
	In view of the claim just proven in this step, the remaining of the proof can be reduced to averaging techniques along the lines of the averaging lemma by Lions, Perthame and Tadmor \cite{LPT} (see also \cite{P}). 
	For the convenience of the reader, we present a detailed, self contained proof. In what follows,  we are going  to show  that, for all $\om\in\Om_3(t_0,\wh y_0)$ and for  $\ve_n=\ve_n(t_0,\wh y_0)$, just obtained, 	
	  \begin{equation}
	\int_{-L}^L \wt \ff_{\ve_n}^\varphi\, d\xi \rightarrow \int_{-L}^L \wt \ff^{\varphi}\, d\xi \text{ in $L^1(\R\X\R^d)$}, \label{5.desiredlim}
	 \end{equation}
	 implying that $\int_{-L}^L \wt \ff_{\ve_n}\,d\xi$ is a Cauchy sequence in  $L^1(V)$. Since  each $\wt\ff_{\ve_n'}$ is a $\chi$-function, we thus see that $\wt\ff_{\ve_n}$ converges in $L^1(V \X (-L,L))$, by Lemma~\ref{2.criterion0}. 
	 In virtue of Lemma~\ref{L:3.03},  its limit is necessarily 
	 $ \wt \ff =\ff^\tau(t_0,\wh y_0,\xi)$.  Hence, in particular, the latter is a $\chi$-function. 
	
	\bigskip 
{\em Step 3:}  	 So, let $\om\in\Om_3(t_0,\wh y_0)$ be fixed  in the following steps and let us write 
	 \begin{equation}
	\gf_{\ve} = \wt\ff_{\ve}^\varphi -\wt \ff^\varphi. \label{5.defg}
	 \end{equation}
	 Notice that, since $\ff_{\ve}^\varphi$ and $\ff^\varphi$ belong to $L^2(\R\X\R^d)$, we can apply  the  Fourier transform to them. Let us consider two functions $\psi$ and $\zeta$ be $C^\infty(\R)$ such that
	 \begin{enumerate}
	 	\item $\zeta(z) = 1$ for $|z|<1$ and $\zeta(z) = 0$ for $|z|>2$, and
	 	\item $\zeta(z) + \psi(z) = 1$ for all $z \in \R$.
	 \end{enumerate}
 	For $\d>0$ and $\g > 0$ let us decompose $\gf_{\ve}$ as
 	\begin{align}
 	\FF_{\ul t, \ul y} \gf_{\ve} &= \zeta\Bigg(\frac{\sqrt{\tau^2 + |\k|^2}}{\g}\Bigg) \FF_{\ul t, \ul y} \gf_{\ve} + \psi\Bigg(\frac{\sqrt{\tau^2 + |\k|^2}}{\g}\Bigg)\FF_{\ul t, \ul y}\gf_{\ve}  \nonumber\\
 									&= \zeta\Bigg(\frac{\sqrt{\tau^2 + |\k|^2}}{\g}\Bigg)\FF_{\ul t, \ul y}\gf_{\ve} + \psi\Bigg(\frac{\sqrt{\tau^2 + |\k|^2}}{\g}\Bigg)\zeta\Bigg( \frac{|\LL_{\wh y}(i\tau', i\k' , \xi)|}{\d} \Bigg)\FF_{\ul t, \ul y}\gf_{\ve} \nonumber\\ 
									&\quad+ \psi\Bigg(\frac{\sqrt{\tau^2 + |\k|^2}}{\g}\Bigg)\psi\Bigg( \frac{|\LL_{\wh y}(i\tau', i\k', \xi)|}{\d} \Bigg)\FF_{\ul t, \ul y}\gf_{\ve} \nonumber \\
 									&\stackrel{\text{def}}{=} \FF_{\ul t, \ul y} \gf_{\ve}^{(1)} + \FF_{\ul t, \ul y} \gf_{\ve}^{(2)} + \FF_{\ul t, \ul y} \gf_{\ve}^{(3)}, \label{5.decompg}
 	\end{align}
 	where we denote by $(\tau, \k)$ the frequency variables corresponding to  $(\ul t, \ul y)$, and $(\tau',\k') = \frac{1}{\sqrt{\tau^2 + |\k|^2}} (\tau, \k)$ for $(\tau, \k) \neq 0$. 
	 	
 	Finally, let $\eta \in C_c^\infty(\R)$ be such that $\eta \equiv 1$ in $[-L,L]$, $\eta(\xi)=0$, for $\xi\notin [L_0,L_0]$,  and let us put
 	\begin{equation}\label{5.defv}
 	\begin{cases}
 		\int_\R \eta \gf_{\ve}^{(1)}\, d\xi = \vf_{\ve}^{(1)}, \\ \\
		\int_\R \eta \gf_{\ve}^{(2)}\, d\xi = \vf_{\ve}^{(2)}, \\ \\
		\int_\R \eta \gf_{\ve}^{(3)}\, d\xi = \vf_{\ve}^{(3)}. 
 	\end{cases}
 	\end{equation}
	Since $\theta\equiv 1$ on $\supp \varphi$,  we  have that
	\begin{equation}
	\int_{-L}^L \gf_{\ve}\, d\xi = \t \vf_{\ve}^{(1)} + \t \vf_{\ve}^{(2)} + \t \vf_{\ve}^{(3)}. \label{5.decompv}
	\end{equation}
	
{\em Step 4}:   (Analysis of $\vf_{\ve}^{(1)}$). Notice that from \eqref{5.defv} and the Cauchy--Schwarz inequality, we trivially have
	\begin{equation}
	\Vert \vf_{\ve}^{(1)} \Vert_{L^2(\R\X\R^d)} \leq \Vert \eta \Vert_{L^2(\R)}\Vert \gf_{\ve}^{(1)} \Vert_{L^2(\R\X\R^d\X(-L,L))}. \label{5.destrivial}
	\end{equation}
	On other hand,  $\sup_{-L_0\le \xi\le L_0} \Vert \gf_{\ve}(\cdot, \cdot, \xi) \Vert_{L^1(\R\X\R^d)} \leq \text{(const.)}$  uniformly in $\ve$. So, we have  that $ \sup_{-L_0\le \xi\le L_0}\Vert \FF_{\ul t, \ul y} \gf_{\ve}(\cdot, \cdot, \xi) \Vert_{L^\infty} \leq \text{(const.)}$ as well. Thus, by \eqref{5.destrivial} and  the Plancherel identity, we have  
	\begin{align*}
	\int_\R\int_{\R^d} | \vf_{\ve'}^{(1)}(\ul t, \ul y) |^2\, d\ul t d\ul y &\leq \Vert \eta \Vert_{L^2(\R)}^2 \int_{-L_0}^{L_0} \int\limits_{\R^{d+1}}  \Bigg| \zeta\Bigg(\frac{\sqrt{\tau^2 + |\k|^2}}{\g}\Bigg)\FF_{\ul t, \ul y}\gf_{\ve}(\tau, \k,\xi) \Bigg|^2\, d\k\, d\tau\,d\xi \\
	&\leq \Vert \eta \Vert_{L^2(\R)}^2 \int_{-L_0}^{L_0}  \int_{\tau^2 + |\kappa|^2 \leq 4 \gamma^2 } | \FF_{\ul t, \ul y}\gf_{\ve}(\tau, \k,\xi) |^2 \, d\k\, d\tau\,d\xi \\
	&\leq C\,\gamma^{d+1},
	\end{align*}
	where $C$ depends only on $d$, $L_0$, $\varphi$ and $\Vert \eta \Vert_{L^2}$. Thus,
	\begin{equation}
	\limsup_{\ve \to 0} \Vert \t \vf_{\ve'}^{(1)} \Vert_{L^1(\R\X\R^d)} \leq C \gamma^{(d+1)/2}, \label{5.estv1}
	\end{equation}
	for some $C = C(d, V, L, \eta)$.
	
	{\em Step 5}: (Analysis of $\vf_{\ve}^{(2)}$). Let us refine the trivial inequality \eqref{5.destrivial}. Clearly,
	\begin{align*}
	(\FF_{\ul t, \ul y} \vf_{\ve}^{(2)})(\tau,\k) &= \int_\R \eta(\xi) (\FF_{\ul t, \ul y} \gf^{(2)})(\tau, \k, \xi)\, d\xi \\	
										  &= \int_\R \eta(\xi) \psi\Bigg(\frac{\sqrt{\tau^2 + |\k|^2}}{\g}\Bigg)\zeta\Bigg( \frac{|\LL_{\wh y_0}(i\tau', i\k', \xi)|}{\d} \Bigg)(\FF_{\ul t, \ul y}\gf_{\ve})(\tau,\k,\xi)\, d\xi.
	\end{align*}
	Consequently, combining the Cauchy--Schwarz inequality and the Plancherel identity again,
	\begin{align*}
	\Vert \vf_{\ve_n'}^{(2)} \Vert_{L^2(\R\X\R^d)}^2 &= \Vert \FF_{\ul t, \ul y} \vf_{\ve}^{(2)} \Vert_{L^2(\R\X\R^d)}^2 \\
	&\leq \int_{\R\X\R^d} \Bigg(\int_{\R} 1_{\{ |\LL_{\wh y_0}(i\tau', i\k', \cdot)| \leq 2 \d\}}(\xi)\eta(\xi)^2 \, d\xi\Bigg)\Bigg( \int_\R \Bigg| \psi\Bigg(\frac{\sqrt{\tau^2 + |\k|^2}}{\g}\Bigg) \\
	&\quad\quad\quad\quad\quad\quad\quad\quad \zeta\Bigg( \frac{|\LL_{\wh y_0}(i\tau', i\k', \xi)|}{\d} \Bigg)(\FF_{\ul t, \ul y}\gf_{\ve})(\tau,\k,\xi) \Bigg|^2 \, d\xi\Bigg) d\k d\tau\\
	&\leq \Big(\sup_{|\tau|^2 + |\kappa|^2 = 1}\big| \big\{ \xi \in \supp \eta; |\LL_{\wh y_0}(i\tau, i\k, \xi)| \leq 2\d \big\} \big| \Big) \\&\quad\quad\quad\quad\quad\quad\quad\quad\quad\quad\quad\quad\quad\quad 
	\Vert \eta \Vert_{L^\infty(\R)}^2 \Vert \gf_{\ve} \Vert_{L^2(\R\X\R^d\X(-L,L))}^2.
	\end{align*} 
	That is,  since $\Vert \gf_{\ve}\Vert_{L^2(\R\X\R^d\X(-L,L))}^2$ is uniformly bounded,
	\begin{align}
	\Vert \vf_{\ve_n'}^{(2)} \Vert_{L^2(\R\X\R^d)}^2 &\leq C \Big(\sup_{|\tau|^2 + |\kappa|^2 = 1}\big| \big\{ \xi \in \supp \eta\,:\, |\LL_{\wh y_0}(i\tau, i\k, \xi)| \leq 2\d \big\} \big| \Big) \label{5.desdeg0} \\
	&\leq C \, O(\d) \label{5.desdeg1},
	\end{align}
	according to Proposition \ref{3.propL}. As a consequence, for all $\om\in\Om_3(t_0,\wh y_0)$,
	\begin{equation}
	\limsup_{\ve \to 0}  \Vert \t \vf_{\ve}^{(2)} \Vert_{L^1(\R\X\R^d)} \leq C\, O(\d), \label{5.estv2}
	\end{equation}
	for some $C = C(V, L, \eta, \LL_{\widehat{y}_0})$.
	
  {\em Step 6:} (Analysis of $\vf_{\ve_n'}^{(3)}$) Let $\ve_n(t_0,\wh y_0)$ be a subsequence obtained as in step 2. 
  Using the equations \eqref{e3.26-05-20-1} and \eqref{5.eqfphi}, we see that
	\begin{align*}
		&\LL_{\widehat{y}}(i\tau,i\k,\xi) (\FF_{\ul t, \ul y} \gf_{\ve_n}) = \,\, \FF_{\ul t, \ul y} \Bigg[ \big(\wt\ff - \wt \ff_{\ve_n} \big) \Bigg(\frac{\partial \varphi}{\partial t} - \wt a_{\wh y}(\xi) . \nabla \varphi \Bigg)\Bigg] \\
		&- \FF_{\ul t, \ul y}\Bigg[ \frac{\partial}{\partial \ul y_d} \Big( \big(\widetilde{a_d}(\widehat{y}, \xi) - \widetilde{a_d}(\widehat{y} + \ve \widehat{\ul y}, \xi) \big) \widetilde \ff_{\ve_n}^\varphi \Big)\Bigg]\\
		&+ \FF_{\ul t, \ul y} \Bigg[ \wt \ff_{\ve_n}\frac{\partial}{\partial \ul y_d} \Big( \big(\widetilde{a_d}(\widehat{y}, \xi) - \widetilde{a_d}(\widehat{y} + \ve_n \widehat{\ul y}, \xi) \big) \varphi \Big)\Bigg] 
		- \frac{\partial}{\partial \xi} \FF_{\ul t, \ul y}\Bigg[ \big(\varphi \widetilde{\qq}_{\ve_n} \big) \Bigg]\\
		& -\frac{\po}{\po\xi} \FF_{\ul t, \ul y}\Big[ \frac{\po}{\po \ul t} (\varphi \ell_{\ve_n}^{(1)})\Big] + \frac{\po}{\po\xi}\FF_{\ul t, \ul y}\Big[\varphi_{\ul t}\ell_{\ve_n}^{(1)}\Big]+   \FF_{\ul t, \ul y}\Big[\frac{\po}{\po \ul t} (\varphi \ell_{\ve_n}^{(2)})\Big] 
		- \FF_{\ul t, \ul y}\Big[\varphi_{\ul t}\ell_{\ve_n}^{(2)}\Big] \\
&+ \d_{\xi=0}\FF_{\ul t, \ul y} \Big[\frac{\po}{\po\ul t} (\varphi \ell_{\ve_n}^{(3)})\Big]- \d_{\xi=0}\FF_{\ul t, \ul y}\Big[\varphi_{\ul t}\ell_{\ve_n}^{(3)}\Big].
	\end{align*}
	 Multiply now the equation above by $\eta(\xi)$ $\psi(\sqrt{\tau^2 + |\k|^2}/\g)$ $\psi(\LL_{\wh y}(i\tau', i\k', \xi)/\d)$. Since $\psi(\LL_{\wh y}(i\tau', i\k', \xi)/\d)$ avoids the set of degeneracy $[\LL_{\wh y} = 0]$, we may now divide by $\LL_{\wh y}(i\tau, i\k, \xi) = \sqrt{\tau^2 + |\k|^2}\LL_{\wh y}(i\tau', i\k', \xi)$, obtaining thus, after integrating in $\xi$,
	\begin{align}
	\vf^{(3)}=  \frac{1}{\g\d}&  \int_\R \eta(\xi) \FF_{\ul t, \ul y}^{-1}\Bigg[\wt \psi\Bigg(\frac{|\LL_{\widehat{y}}(i\tau', i\k',\xi)|}{\d}\Bigg)\wt \psi\big(\sqrt{\tau^2 + |\k|^2}/\g\big) (\FF_{\ul t, \ul y} \rr_{\ve_n}) \Bigg] \,d\xi \nonumber\\
										 &-\frac{1}{\g\d}  \int_\R \eta(\xi) \FF_{\ul t, \ul y}^{-1}\Bigg[\wt \psi\Bigg(\frac{|\LL_{\widehat{y}}(i\tau', i\k',\xi)|}{\d}\Bigg)\wt \psi\big(\sqrt{\tau^2 + |\k|^2}/\g\big) 
										 \frac{\partial}{\partial \xi}(\FF_{\ul t, \ul y} (\varphi \qq_{\ve_n})) \Bigg]\, d\xi \nonumber\\
										 &+ \frac1{\g\d} \int_\R \eta(\xi)  \FF_{\ul t, \ul y}^{-1}\Bigg[\wt\psi\Bigg(\frac{|\LL_{\widehat{y}}(i\tau', i\k',\xi)|}{\d}\Bigg)\nonumber\\
										 &\quad\quad\quad\quad\quad\quad\quad\quad\wt \psi\big(\sqrt{\tau^2 + |\k|^2}/\g\big)
										 \frac{\partial}{\partial \xi} 
										 \Big(-i\tau \FF_{\ul t, \ul y}\Big[ \varphi \ell_{\ve_n}^{(1)}\Big] +\FF_{\ul t, \ul y}\Big[\varphi_{\ul t} \ell_{\ve_n}^{(1)}\Big] 
										 \Big)\Bigg]\,d\xi \nonumber\\
										&- \frac1{\g\d} \int_\R \eta(\xi)  \FF_{\ul t, \ul y}^{-1}\Bigg[\wt\psi\Bigg(\frac{|\LL_{\widehat{y}}(i\tau', i\k',\xi)|}{\d}\Bigg)\nonumber\\
										 &\quad\quad\quad\quad\quad\quad\quad\quad\tilde \psi\big(\sqrt{\tau^2 + |\k|^2}/\g\big)
										 \Big(-i\tau \FF_{\ul t, \ul y}\Big[\varphi \ell_{\ve_n}^{(2)}\Big] +\FF_{\ul t, \ul y}\Big[\varphi_{\ul t} \ell_{\ve_n}^{(2)}\Big] 
										 \Big)\Bigg]\,d\xi \nonumber\\
									        &- \frac{1}{\g\d}  \eta(0) \FF_{\ul t, \ul y}^{-1} \Bigg[ \wt\psi\Bigg(\frac{|\LL_{\widehat{y}}(i\tau', i\k',0)|}{\d}\Bigg)\nonumber\\
									         &\quad\quad\quad\quad\quad\quad\quad\quad\tilde\psi\big(\sqrt{\tau^2 + |\k|^2}/\g\big)
										 \Big(-i\tau \FF_{\ul t, \ul y}\Big[\varphi \ell_{\ve_n}^{(3)}\Big] +\FF_{\ul t, \ul y}\Big[\varphi_{\ul t} \ell_{\ve_n}^{(3)}\Big] 
										 \Big)\Bigg]\,d\xi \nonumber\\
										 &= (I)_{\ve_n} - (II)_{\ve_n} + (III)_{\ve_n} - (IV)_{\ve_n} - (V)_{\ve_n}, \label{5.defI} 
	\end{align}
	where $\wt \psi(z) = \psi(z)/z$, and $\rr_{\ve_n'} \rightarrow 0$ in $H^{-1}(\R\X\R^d\X\R)$ due to \eqref{3.lima} and Lemma~\ref{L:3.03}. Let us estimate each term separately.
	
	\bigskip
	
	{\em Step 7}: (Analysis of $ (IV)_{\ve_n}, (V)_{\ve_n} $). Since
	 $$
	  \tau \wt\psi\Bigg(\frac{\LL_{\widehat{y}}(i\tau', i\k',\xi)}{\d}\Bigg)\tilde\psi\big(\sqrt{\tau^2 + |\k|^2}/\g\big)\; \text{and} \; 
	\wt\psi\Bigg(\frac{\LL_{\widehat{y}}(i\tau', i\k',\xi)}{\d}\Bigg)\tilde\psi\big(\sqrt{\tau^2 + |\k|^2}/\g\big)
	$$ 
	are bounded for $(\tau,\k)\in\R^{d+1}$, uniformly in $\xi\in(-L,L)$, and since from
	step~2, $\ell_{\ve_n}^{(2)}\to0$ in $L^2(V\X(-L,L))$, for any $V\subset\subset\R^d\X(0,\infty)$, and $ \ell_{\ve_n}^{(3)}\to 0$ in $L^2(V)$, for any $V\subset\subset \R^d\X(0,\infty)$,
	by Plancherel identity we easily conclude that 
	$$
	\|(IV)_{\ve_n}\|_{L^2(\R^{d+1})}, \|(V)_{\ve_n}\|_{L^2(\R^{d+1})} \to 0, \qquad \text{ as $n\to\infty$}.
	$$ 
	And so
	$$
	\|\theta (IV)_{\ve_n}\|_{L^1(\R^{d+1})}, \|\theta (V)_{\ve_n}\|_{L^1(\R^{d+1})} \to 0, \qquad \text{ as $n\to\infty$}.
	$$ 
	
	{\em Step 8}: (Analysis of $(III)_{\ve_n}$). We have
	\begin{align*}
	(III)_{\ve_n} &=   \frac1{\g\d} \int_\R \eta(\xi)  \FF_{\ul t, \ul y}^{-1}\Bigg[\wt\psi\Bigg(\frac{|\LL_{\widehat{y}}(i\tau', i\k',\xi)|}{\d}\Bigg)\nonumber\\
										 &\quad\quad\quad\quad\quad\quad\quad\quad\wt \psi\big(\sqrt{\tau^2 + |\k|^2}/\g\big)
										 \frac{\partial}{\partial \xi} 
										 \Big(-i\tau \FF_{\ul t, \ul y}\Big[ \varphi \ell_{\ve_n}^{(1)}\Big] +\FF_{\ul t, \ul y}\Big[\varphi_{\ul t} \ell_{\ve_n}^{(1)}\Big] 
										 \Big)\Bigg]\,d\xi \nonumber\\
				  &=  -\frac{1}{\g\d} \int_\R \eta'(\xi) \FF_{\ul t, \ul y}^{-1} \Bigg[ \wt \psi\Bigg(\frac{|\LL_{\widehat{y}}(i\tau', i\k',\xi)|}{\d}\Bigg) \\ 
				  &\quad\quad\quad\quad\quad\quad\quad\quad\wt \psi\big(\sqrt{\tau^2 + |\k|^2}/\g\big)
										\Big(-i\tau \FF_{\ul t, \ul y}\Big[ \varphi \ell_{\ve_n}^{(1)}\Big] +\FF_{\ul t, \ul y}\Big[\varphi_{\ul t} \ell_{\ve_n}^{(1)}\Big] 
										 \Big)\Bigg]\,d\xi \nonumber\\
				  &\quad - \frac{1}{\g\d^2} \int_\R \eta(\xi) \FF_{\ul t, \ul y}^{-1} \Bigg[ \wt \psi' \Bigg(\frac{|\LL_{\widehat{y}}(i\tau', i\k',\xi)|}{\d}\Bigg) \frac{\partial \LL_{\wh y}}{\partial\xi}(i\tau',i\k',\xi) \s(\tau',\k',\xi) \\ 	
		  &\quad\quad\quad\quad\quad\quad\quad\quad\wt \psi\big(\sqrt{\tau^2 + |\k|^2}/\g\big)
										\Big(-i\tau \FF_{\ul t, \ul y}\Big[ \varphi \ell_{\ve_n}^{(1)}\Big] +\FF_{\ul t, \ul y}\Big[\varphi_{\ul t} \ell_{\ve_n}^{(1)}\Big] 
										 \Big)\Bigg]\,d\xi, 		  		  
	\end{align*}
where $|\s(\tau',\k',\xi)|=1$. From what has been said in the previous step and from the fact that 
$$
\Big|\wt \psi' \Bigg(\frac{|\LL_{\widehat{y}}(i\tau', i\k',\xi)|}{\d}\Bigg) \frac{\partial \LL_{\wh y}}{\partial\xi}(i\tau',i\k',\xi) \s(\tau',\k',\xi) \wt \psi\big(\sqrt{\tau^2 + |\k|^2}/\g\big)\Big|(1+|\tau|)
$$
is  bounded for $(\tau,\k)\in\R^{d+1}$ uniformly in $\xi\in(-L,L)$, and since from step~2, $\ell_{\ve_n}^{(1)}\to0$ in $L^2(V\X(-L,L))$, for any $V\subset\subset\R^d\X(0,\infty)$,	
by Plancherel identity we easily conclude that 
	$$
	\|(III)_{\ve_n}\|_{L^2(\R^{d+1})} \to 0, \qquad \text{ as $n\to\infty$}.
	$$ 
	And so
	$$
	\|\theta (III)_{\ve_n}\|_{L^1(\R^{d+1})} \to 0, \qquad \text{ as $n\to\infty$}.
	$$ 
	
	{\em  Step 9}:  (Analysis of $(I)_{\ve_n}$). Because $\rr_{\ve_n'} \rightarrow 0$ in  $H^{-1}(\R\X\R^d\X(-L,L))$, we may write $\rr_{\ve_n} = (1 - \Delta_{\ul t, \ul y})^{1/2} (1 + \partial_\xi) c_{\ve_n}(\ul t, \ul y, \xi)$, 
	with $c_{\ve_n} \rightarrow 0$ in $L^2(\R\X\R^d\X(-L,L))$. That is,
	\begin{align*}
	&\wt\psi\big(\sqrt{\tau^2 + |\k|^2}/\g\big) \FF_{\ul t, \ul y} (\rr_{\ve_n}) \\ 
	&= \wt \psi\Bigg(\frac{\sqrt{\tau^2 + |\k|^2}}{\g}\Bigg) (1 + \tau^2 + |\k|^2)^{1/2} \Bigg(1 + \frac{\partial}{\partial_\xi}\Bigg) (\FF_{\ul t, \ul y}c_{\ve_n}).
	\end{align*}
	 However, since $ \psi(\sqrt{\tau^2+|\k|^2}/\g)$ vanishes near the origin, we conclude that
	$$
	\wt \psi\big(\sqrt{\tau^2 + |\k|^2}/\g\big) (\FF_{\ul t, \ul y} (\rr_{\ve_n}))  = \Bigg(1 + \frac{\partial}{\partial \xi} \Bigg)\cc_{\ve_n}(\tau,\kappa,\xi),
	$$
	for some $\cc_{\ve_n} \rightarrow 0$ in $L^2(\R\X\R^d\X(-L,L))$.  Plugging this formula into the definition of $(I)_{\ve_n}$ and performing the required integration by parts yields
	\begin{align*}
	(\FF_{\ul t, \ul y}(I)_{\ve_n}) &= -\frac{1}{\g\d}\int_\R \eta'(\xi)  \wt \psi\Bigg(\frac{|\LL_{\widehat{y}}(i\tau', i\k',\xi)|}{\d}\Bigg) \cc_{\ve_n}(\tau, \k, \xi) \, d\xi \\
				   &-\frac{1}{\g\d^2}\int_\R \eta(\xi)  \wt \psi'\Bigg(\frac{|\LL_{\widehat{y}}(i\tau', i\k',\xi)|}{\d}\Bigg) \frac{\partial \LL_{\widehat{y}}}{\partial \xi}(i\tau', i\k',\xi) \s(\tau',\k',\xi)\cc_{\ve_n}(\tau, \k, \xi) \, d\xi \\
				   &+ \frac{1}{\g\d}\int_\R \eta(\xi)  \wt \psi\Bigg(\frac{|\LL_{\widehat{y}}(i\tau', i\k',\xi)|}{\d}\Bigg) \cc_{\ve_n}(\tau, \k, \xi) \, d\xi.
	\end{align*}
	Now, combining the trivial estimate \eqref{5.destrivial} with the fact that $\wt\psi$, $\wt \psi'$ and $\partial_\xi \LL (i\tau',i\k',\xi)$ are bounded for $\xi \in \supp \eta$, 
	\begin{align*}
	 \Vert (I)_{\ve_n} \Vert_{L^2(\R\X\R^d)}^2 &\leq C\Vert \eta \Vert_{H^1(\R)}^2 \Bigg(\frac{1}{\d} + \frac{1}{\d^2}\Bigg)  \Vert \cc_{\ve_n} \Vert_{L^2(\R\X\R^d)}^2 \\ 
	& \rightarrow 0, \text{ as $\ve_n \to 0$}.
	\end{align*}
	As a result, we establish that
	\begin{equation}
	\limsup_{\ve_n \to 0} \Vert \t (I)_{\ve_n} \Vert_{L^1(\R\X\R^d)} = 0. \label{5.limI}
	\end{equation}
	
	{\em Step 10}:  (Analysis of $(II)_{\ve_n'}$). 
	Observe that, from \eqref{3.eqfve} and the fact that $\wt\qq_{\ve_n}$ is supported in $|\xi|\leq L$, it holds
	\begin{align*}
	\widetilde{\qq}_{\ve_n'} = \frac{\partial}{\partial \ul t} \Bigg\{ \int_{-L}^\xi \Bigg[ \wt \ff_{\ve_n'} - \sum_{k=1}^\infty \int_{t_0}^{t_0+\ve\ul t} \wt g_{k, \ve_n}&\d_{\xi = \wt \uf (t,y^0+\ve\ul y)} \, d\b_k(t)\Bigg] \, d\xi \Bigg\} \\&
	+ \nabla_{\ul y} . \Bigg\{ \int_{-L}^\xi \ff_{\ve_n'} \wt \abf(\ul y, \xi)  \, d\xi \Bigg\},
	\end{align*}
	where $\wt \abf(\ul y, \xi) = (\wh \abf(\xi), \wt a_d(\wh y_0+ \ve\ul y, \xi))$. This implies that, for all $1 \leq r,  < \infty$,
	$$
	\varphi \widetilde{\qq}_{\ve_n'} \in W^{-1,r}(\R\X\R^d\X(-L,L)),
	$$
	and it is uniformly bounded in this space. On the other hand, by the well known compactness of the embedding of the space of the finite variation measures over $\R\X\R^d\X(-L,L)$ in $W^{-1,q}(\R\X\R^d\X(-L,L))$ for some $1<q<2$,
	by interpolation we deduce that
	$$
	\varphi \widetilde{\qq}_{\ve_n} \in H^{-1}(\R\X\R^d\X(-L,L)),
	$$
	and $\|\widetilde{\qq}_{\ve_n} \Vert_{H^{-1}}^2 \to 0$, as $\ve_n \rightarrow 0$. Therefore, the analysis of $(II)_{\ve_n}$ reduces to that of $(I)_{\ve_n}$, which was made in the previous step.    	
	
	{\em Step 11}: (Conclusion). {}By the steps 6 to 10, we see that $\vf_{\ve_n'}^{(3)}\to 0$  in $L^2(\R\X\R^d)$ as $\ve_n\to0$, and so $\theta\vf_{\ve_n}^{(3)}\to 0$ in $L^1(\R\X\R^d)$.  On the other hand, by steps 4 and 5,
	sending $\g$ and $\d$ to 0, we conclude that   $\vf_{\ve_n'}^{(1)},\, \vf_{\ve_n'}^{(2)}\to 0$ in $L^2(\R\X\R^d)$ and again it follows that   $\theta\vf_{\ve_n'}^{(1)},\, \theta \vf_{\ve_n'}^{(2)}\to 0$  in $L^1(\R\X\R^d)$.
	It follows that $\wt u_{\ve_n}$  is a Cauchy sequence in $L^1_\loc(\R\X\R^d)$, therefore, so is $\wt\ff_{\ve_n}$ in $L_\loc^1(\R\X\R^d\X(-L,L))$, and by Lemma~\ref{L:3.03}, we conclude that $\wt\ff_{\ve_n}\to \ff^{\tau}(t_0,\wh y_0,\cdot)$ in  $L_\loc^1(\R\X\R^d\X(-L,L))$.   Moreover, since the latter holds for each $\om\in\Om_3(t_0,\wh y_0)$, by dominated convergence we deduce that  $\wt\ff_{\ve_n}\to \ff^{\tau}(t_0,\wh y_0,\cdot)$  in $L_\loc^1(\Om\X\R\X\R^d\X(-L,L))$,
	 and we conclude that $\ff^\tau(t_0,\wh y_0,\xi)$ is a $\chi$-function a.e.\ in $\Om\X(-L,L)$,  which concludes the proof.	
\end{proof}

{\em Conclusion of the proof of Theorem~\ref{T:3.1}.} 
By Lemma~\ref{L:3.04}, it follows that for all $(t_0,\wh y_0)\in\E$, $\ff^\tau(t_0,\wh y_0, \xi)$ is a $\chi$-function a.e.\  in $\Om\X(-L,L)$, and $\E\subset\Sigma$ has total measure, by Lemma~\ref{3.lemma1} and Lemma~\ref{3.lemma2}. Thus, $\ff^\tau(\cdot,\cdot, \cdot)$ is a $\chi$-function a.e.\ in $\Om\X\Sigma\X(-L,L)$.  Hence, from Lemma~\ref{2.criterion} we conclude that $\ff^\tau$ is a strong trace and integrating in $\xi$ we  arrive at the desired conclusion for $u$ on $\cO_0$ and $\Gamma_0$.  Covering $\po\cO$ with a finite set 
$\{\Gamma_\a\}_{\a\in I_0}$,  each $\Gamma_\a$  being the graph of a smooth function, we then  finally deduce \eqref{e03.2}. 

It remains to prove \eqref{e03.2'}.  From the essential strong convergence of $ \ff_\psi(\cdot, \cdot, s,\cdot)$ in $ L^1(\Om\X\Sigma\X(-L,L))$,  it follows that, given any sequence $s_n\to0$  in $(0,1)\setminus\mathcal{N}$, with $\mathcal{N}$ of null measure,  we can obtain
a subsequence still denoted $s_n$  such that  $ \ff_\psi(\om, \cdot, s_n,\cdot)\to \ff^\tau(\om,\cdot,\cdot)$ in $ L^1(\Sigma\X(-L,L))$ for $\om$ in a subset of total measure of $\Om$. Thus, by Lemma~\ref{2.criterion0}, we deduce that $\ff^\tau(\om,\cdot,\cdot)$ is
a $\chi$-function for $\om$ in a subset of total measure of $\Om$. By Lemma~\ref{L:3.02} we conclude, using again Lemma~\ref{2.criterion0}, that   $\esslim \ff_\psi(\om, \cdot, s,\cdot)=\ff^\tau(\om,\cdot,\cdot)$ in $L^1(\Sigma\X(-L,L))$  
for $\om$ in a subset of total measure of $\Om$.  Again integrating in $\xi$, we arrive at the desired conclusion for $u$ on $\cO_0$ and $\Gamma_0$, and so by   covering $\po\cO$ with a finite set 
$\{\Gamma_\a\}_{\a\in I_0}$ as above, we then  finally deduce \eqref{e03.2'}. 

\end{proof}

  \section{Doubling of Variables, Kato-Kruzhkov Inequality, Comparison Principle and Uniqueness}\label{S:3}

With the strong trace result from section~\ref{S:2} we can now move on to the Neumann problem \eqref{e1.1}-\eqref{e1.3}. As mentioned before, the existence of strong traces is the key for proving uniqueness of the solutions, which in turn, will also be essential in the prove of existence. In this section we prove a comparison principle for solutions of \eqref{e1.1}-\eqref{e1.3}. As a consequence, we also deduce uniqueness of solutions and a maximum principle. 

  Let us  first state the following extension of proposition~9 of \cite{DV}. In the latter, the statement of the proposition is preceded by an auxiliary result establishing the existence $\bbP$-a.s.\  of the right and left weak limits of $f(t,\cdot,\cdot)=1_{u(t,\cdot,\cdot)>\xi}$
  in the sense of distributions.  A similar fact holds here also, with exactly the same proof. Namely, there are $f^\pm(t,\cdot,\cdot)$ which coincide with $f(t,\cdot,\cdot)$ for a.a.\ $t\in[0,T]$, such that
  $$
  \la f(t_*\pm\ve),\varphi\ra\to \la f^\pm, \varphi\ra,
  $$
  for all $\varphi\in C_c^1(\cO\X\R)$, and all $t_*\in(0,T)$; for $t_*=0$, only the right limit,  and $t_*=T$, only the left limit. The proof of the proposition follows the same lines of the corresponding  one in \cite{DV} and so we omit it.

  \begin{proposition}[Doubling of variables]\label{P:4.1}  Let $f_i=1_{u_i>\xi}$, where $u_i$ is a kinetic solution of \eqref{e1.1}--\eqref{e1.3}, $i=1,2$. Set $\bar f_2=1-f_2$. Then, for $0\le t\le T$, and non-negative test functions $\rho\in C_c^\infty(\R^d)$, $\phi\in C_c^\infty(\cO)$, $\psi\in\ C_c^\infty(\R)$, with $\supp\rho\subset B(0;r)$ with $r>0$ sufficiently small, we have
  \begin{multline}\label{e4.1}
  \bbE\int_{\cO^2}\int_{\R^2} \rho(x-y)\psi(\xi-\z) \phi((x+y)/2)f_1^{\pm}(t,x,\xi) \bar f_2^\pm(t,y,\z)\,d\xi\,d\z\,dx\,dy \\
 \le \bbE\int_{\cO^2}\int_{\R^2} \rho(x-y)\psi(\xi-\z)\phi((x+y)/2)f_{1,0}(x,\xi)\bar f_{2,0} (y,\z)\,d\xi\,d\z\,dx\,dy+I_\rho+I_\phi+I_\psi,
 \end{multline}
 where
 \begin{multline*}
 I_\rho=\bbE\int_0^t\int_{\cO^2}\int_{\R^2}f_1(s,x,\xi)\bar f_2(s,y,\z) (\abf(\xi)-\abf(\z))\phi((x+y)/2)\psi(\xi-\z)\,d\xi\,d\z\\ \cdot \nabla_x\rho(x-y)\,dx\,dy\,ds,
 \end{multline*}
 \begin{multline*}
 I_\phi=\frac12\bbE\int_0^t\int_{\cO^2}\int_{\R^2}f_1(s,x,\xi)\bar f_2(s,y,\z) (\abf(\z)+\abf(\xi))\rho(x-y)\psi(\xi-\z)\,d\xi\,d\z\\ \cdot \nabla_x\phi((x+y)/2)\,dx\,ds,
 \end{multline*}
  and
\begin{multline*}
I_\psi=\frac12\int_{\cO^2}\rho(x-y)\phi((x+y)/2)\bbE\int_0^t\int_{\R^2}\psi(\xi-\z)\sum_{k\ge1}|g_k(x,\xi)-g_k(y,\z)|^2\\ d\nu_{s,x}^1\otimes\nu_{s,y}^2(\xi,\z)\,dx\,dy\,ds,
\end{multline*}
where $\nu^i_{t,x}=-\partial_\xi f_i  =\d_{u_i=\xi}$, $i=1,2$.
 \end{proposition}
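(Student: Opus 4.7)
My plan is to adapt the Debussche--Vovelle doubling-of-variables argument \cite{DV} essentially verbatim; the only point to verify is that the presence of the boundary $\po\cO$ causes no new difficulty. This is immediate from the hypotheses: since $\phi$ is compactly supported in the open set $\cO$ and $\supp\rho\subset B(0,r)$ with $r<\dist(\supp\phi,\po\cO)/2$, the full test function $\rho(x-y)\phi((x+y)/2)$ is supported in a compact subset of $\cO\X\cO$, so every integral below is effectively an integral over the interior and the Neumann condition plays no direct role at this stage (it was already used to make sense of the kinetic measure and the strong traces needed for the one-sided limits $f_i^\pm$).

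After regularizing $f_1$ and $\bar f_2=1-f_2$ by convolution in $(x,\xi)$ and $(y,\zeta)$ respectively, I would apply It\^o's product formula to $f_1^\ve(s,x,\xi)\bar f_2^\ve(s,y,\zeta)$ and then test against $\rho(x-y)\psi(\xi-\zeta)\phi((x+y)/2)$. This produces: an initial-data term; transport terms, which, after exploiting $\nabla_x\rho(x-y)=-\nabla_y\rho(x-y)$ and the symmetry $\nabla_x\phi((x+y)/2)=\nabla_y\phi((x+y)/2)$, reorganize into exactly $I_\rho$ and $I_\phi$; kinetic-measure terms involving $\partial_\xi\psi$ and $\partial_\zeta\psi$ integrated against $m_1$ and $m_2$ respectively; and finally stochastic terms together with an It\^o cross-correction. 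Removing the mollification at the end requires a standard commutator estimate, which is trivial here because the test function is smooth and compactly supported in $\cO\X\cO$.

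The inequality sign in \eqref{e4.1} comes from the measure terms: since $f_1,\bar f_2,\rho,\phi,\psi\ge 0$ and $m_1,m_2$ are non-negative, the contributions from $m_1$ and $m_2$ have a definite sign and may simply be dropped. The most delicate algebra is the combination of the It\^o correction with the $\tfrac12 G^2$ contributions inherited from the individual kinetic equations for $f_1$ and $\bar f_2$: expectation kills the genuine martingale terms, the quadratic covariation produces the cross term $-\sum_k g_k(x,u_1)g_k(y,u_2)$ evaluated on the diagonals $\{\xi=u_1,\zeta=u_2\}$, and the diagonal contributions $\tfrac12\sum_k|g_k(x,\xi)|^2$ and $\tfrac12\sum_k|g_k(y,\zeta)|^2$ (arising after one integration by parts in $\xi$ or $\zeta$, respectively) recombine via $\tfrac12 a^2+\tfrac12 b^2-ab=\tfrac12(a-b)^2$ to yield exactly the integrand of $I_\psi$ against $d\nu^1_{s,x}\otimes d\nu^2_{s,y}$. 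The principal technical obstacle, as in \cite{DV}, is justifying the It\^o formula for the regularized distribution-valued processes and tracking the signs carefully; beyond this bookkeeping, no new ingredients are required, which is why the authors appeal to \cite{DV} and omit the details.
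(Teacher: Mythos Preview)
Your proposal is correct and coincides with the paper's own treatment: the paper likewise omits the proof and simply refers to \cite{DV}, the point being exactly the one you identify---since $\phi\in C_c^\infty(\cO)$ and $\supp\rho\subset B(0,r)$ with $r$ small, the tensorized test function is compactly supported in $\cO\X\cO$, so the computation is purely interior and the argument of \cite{DV} carries over without modification.

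One small correction of terminology: the one-sided limits $f_i^\pm$ in the statement are the \emph{temporal} right/left weak limits of $t\mapsto f_i(t,\cdot,\cdot)$ (cf.\ the paragraph preceding the proposition), obtained exactly as in \cite{DV} from the kinetic formulation alone; they have nothing to do with the spatial strong trace theorem of Section~\ref{S:2}, and neither that theorem nor the Neumann condition is invoked here. The strong trace property enters only later, in the passage from the Kato--Kruzhkov inequality (Theorem~\ref{T:4.1}) to the comparison principle (Theorem~\ref{T:4.2}), where one lets $\phi\to 1$ and must control the resulting boundary contribution.
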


 As a consequence of Proposition~\ref{P:4.1}, we have the Kato-Kruzhkov inequality. Again, the proof is similar to the proof of the comparison principle in \cite{DV}, therefore we omit it. 

 \begin{theorem} \label{T:4.1} Let $u_1, u_2$ be kinetic solutions of \eqref{e1.1}--\eqref{e1.3}. Then, for a.e.\ $0\le t\le T$ and $0\le \phi\in C_c^\infty(\cO)$, we have
 \begin{multline}\label{e4.2}
 \bbE\int_{\cO} \left(u_1(t,x)-u_2(t,x)\right)_+\phi(x)\,dx\\  \le \bbE\int_0^t\int_{\cO} \sgn(u_1-u_2)_+\left(\Abf(u_1)-\Abf(u_2)\right)\cdot\nabla\phi(x)\,dx\,ds
 \\+\int_{\cO}\left(u_{10}(x)-u_{20}(x)\right)_+\phi(x)\,dx.
 \end{multline}
 \end{theorem}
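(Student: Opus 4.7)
The proof will proceed by specializing the test functions in Proposition~\ref{P:4.1} to approximations of the identity and passing to the limit in a careful order. I choose $\rho=\rho_\ve$, a nonnegative even mollifier on $\R^d$ with $\supp\rho_\ve\subset B(0;\ve)$, and $\psi=\psi_\d$, a nonnegative mollifier on $\R$. Since $\phi$ is compactly supported in $\cO$, for $\ve<\tfrac12\dist(\supp\phi,\po\cO)$ the product $\rho_\ve(x-y)\phi((x+y)/2)$ vanishes near $\po\cO\X\po\cO$, so \eqref{e4.1} applies with no boundary contribution (the strong trace property of Section~\ref{S:2} plays no role at this stage; it will enter only when the test function is allowed to touch $\po\cO$, in the next stage of the uniqueness argument). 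Since $f_i^+=f_i^-=f_i$ outside a countable set of times, passing through the $f^\pm$ version of \eqref{e4.1} yields the conclusion for a.e.\ $t$. My strategy is to send $\d\to 0$ first and then $\ve\to 0$.

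The two kinetic identities that drive the $\d\to 0$ limit are
\begin{equation*}
\iint_{\R^2} f_1(x,\xi)\bar f_2(y,\z)\,\psi_\d(\xi-\z)\,d\xi\,d\z\;\xrightarrow[\d\to 0]{}\;(u_1(x)-u_2(y))_+,
\end{equation*}
and, using the symmetrized symbol $\tfrac12(\abf(\xi)+\abf(\z))$,
\begin{equation*}
\tfrac12\iint_{\R^2} f_1(x,\xi)\bar f_2(y,\z)(\abf(\xi)+\abf(\z))\psi_\d(\xi-\z)\,d\xi\,d\z\;\xrightarrow[\d\to 0]{}\;\sgn_+(u_1(x)-u_2(y))\bigl(\Abf(u_1(x))-\Abf(u_2(y))\bigr),
\end{equation*}
both following from $f_1(x,\xi)\bar f_2(y,\xi)=\mathbf{1}_{u_2(y)\le\xi<u_1(x)}$. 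The first, applied to the left-hand side of \eqref{e4.1} and to the initial-data term, produces the correct $(u_1-u_2)_+$ structure; the second, applied to $I_\phi$, produces the Kruzhkov flux $\sgn_+(u_1-u_2)(\Abf(u_1)-\Abf(u_2))$. The noise term $I_\psi$ is disposed of by the Lipschitz bound \eqref{e1.5}, which yields $\sum_k|g_k(x,\xi)-g_k(y,\z)|^2\le D(\ve^2+\d^2)$ on the support of the integrand, so that $I_\psi=O(\ve^2+\d^2)$ vanishes under either iterated limit.

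The main obstacle will be the term $I_\rho$, and this is where the order of limits becomes essential. The $C^3$ regularity of $\Abf$ gives $|\abf(\xi)-\abf(\z)|\le \Lip(\abf)\,\d$ on $\supp\psi_\d$, and since the integrand is supported in $|\xi|,|\z|\le L:=\max(\|u_1\|_\infty,\|u_2\|_\infty)$,
\begin{equation*}
\iint_{|\xi|,|\z|\le L}|\abf(\xi)-\abf(\z)|\,\psi_\d(\xi-\z)\,d\xi\,d\z\le C(L,\abf)\,\d.
\end{equation*}
For fixed $\ve$ one has $\|\nabla_x\rho_\ve\|_{L^1}\le C\ve^{-1}<\infty$, hence $|I_\rho|\le C\ve^{-1}\d\,T\,\|\phi\|_{L^1(\cO)}\to 0$ as $\d\to 0$. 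Letting $\ve\to 0$ afterwards collapses $\rho_\ve(x-y)$ onto the diagonal $x=y$ by Lebesgue differentiation (valid because all integrands are bounded), producing the target expressions in the remaining terms. A naive simultaneous limit in $I_\rho$ would only give $O(\d/\ve)$, which is divergent; the iterated order $\d\to 0$ first, then $\ve\to 0$, is precisely what makes the estimate close and yields \eqref{e4.2}.
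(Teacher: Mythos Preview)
Your overall strategy and the kinetic identities you invoke for the left-hand side, the initial-data term, $I_\phi$, and the bound $|I_\rho|\lesssim \d\,\ve^{-1}$ are all correct and in line with the standard Debussche--Vovelle argument to which the paper defers (the paper omits its own proof). The gap is in your treatment of $I_\psi$. You correctly bound $\sum_k|g_k(x,\xi)-g_k(y,\z)|^2\le D(\ve^2+\d^2)$ on the support of the integrand, but you then tacitly treat the remaining factor
\[
\int_{\R^2}\psi_\d(\xi-\z)\,d\nu^1_{s,x}\otimes d\nu^2_{s,y}(\xi,\z)=\psi_\d\bigl(u_1(s,x)-u_2(s,y)\bigr)
\]
as $O(1)$. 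It is not: since $\nu^i_{s,\cdot}=\d_{u_i(s,\cdot)}$ are Dirac masses, this is a pointwise evaluation of $\psi_\d$ and can be as large as $\|\psi_\d\|_\infty\sim\d^{-1}$ (for instance wherever $u_1=u_2$). The honest bound is therefore $I_\psi=O\bigl((\ve^2+\d^2)\d^{-1}\bigr)=O(\ve^2\d^{-1}+\d)$, and with your order of limits---$\d\to 0$ first at fixed $\ve$---the term $\ve^2\d^{-1}$ diverges, so the argument as written does not close.

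The repair is simple: instead of an iterated limit, couple the parameters by setting $\d=\ve^\a$ with some $1<\a<2$ and send $\ve\to 0$. Then $I_\rho=O(\ve^{\a-1})\to 0$ and $I_\psi=O(\ve^{2-\a}+\ve^\a)\to 0$ simultaneously, while the left-hand side, the initial-data term, and $I_\phi$ still converge to the desired expressions by dominated convergence exactly as you describe.
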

 
 \medskip
 
 As a direct consequence of Theorem~\ref{T:4.1} and Theorem~\ref{T:3.1}  we have the following.

 \begin{theorem}[Comparison Principle]\label{T:4.2}    Let $u_1, u_2$ be kinetic solutions of \eqref{e1.1}--\eqref{e1.3}. Then for a.e.\ $t\in (0,T)$,
 \begin{equation}\label{e4.3}
 \bbE\int_{\cO} (u_1(t,x)-u_2(t,x))_+\, dx\le \int_{\cO}(u_{10}(x)-u_{20})_+\,dx.
 \end{equation}

 \end{theorem}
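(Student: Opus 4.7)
The plan is to apply Theorem~\ref{T:4.1} with a sequence of test functions $\phi_\delta\in C_c^\infty(\cO)$ converging pointwise to $1$, and to let $\delta\to 0$. Because $u_1,u_2\in L^\infty$, dominated convergence handles the time-$t$ term on the left of \eqref{e4.2} and the initial-datum term on the right. Everything therefore rests on showing that the flux term
$$
J_\delta := \bbE\int_0^t\!\!\int_\cO \sgn(u_1-u_2)_+\bigl(\Abf(u_1)-\Abf(u_2)\bigr)\cdot \nabla\phi_\delta\,dx\,d\sigma
$$
tends to $0$ as $\delta\to 0$; this is where the strong trace property and the Neumann condition enter.

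To construct $\phi_\delta$, I would fix a strongly regular Lipschitz deformation $\Psi:[0,1]\X\po\cO\to\overline\cO$ for $\po\cO$ and parametrize a tubular collar $\{\Psi(s,\wh x):s\in[0,s_0],\ \wh x\in\po\cO\}$. Set $\phi_\delta(\Psi(s,\wh x))=\chi(s/\delta)$ there, where $\chi\in C^\infty([0,\infty))$ is nondecreasing with $\chi\equiv 0$ on $[0,\tfrac12]$ and $\chi\equiv 1$ on $[1,\infty)$, and extend by $1$ in the interior. A partition of unity reduces everything to a single boundary chart. In these coordinates $\nabla\phi_\delta$ is supported in the $\delta$-collar, has magnitude of order $1/\delta$, and points, up to the Jacobian $J[\Psi_s]$, opposite to the unit outer normal $\nu_s$ of $\Psi_s(\po\cO)$; strong regularity of $\Psi$ gives $J[\Psi_s]\to 1$ in $\Lip(\po\cO)$ and $\nu_s\to\nu$ in $L^1$.

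After the change of variables $s=\delta r$, the $s$-slab in $J_\delta$ becomes an integral over $r\in[\tfrac12,1]$ of $\chi'(r)$ against a boundary quantity whose integrand depends on $u_1,u_2$ evaluated at $\Psi(\delta r,\wh x)$. Applying Theorem~\ref{T:3.1} componentwise to $\Abf$ and using the identity $[G(u)]^\tau=G(u^\tau)$, together with $J[\Psi_s]\to 1$, $\nu_s\to\nu$, and the $L^\infty$ bound on $u_i$, we deduce
$$
\lim_{\delta\to 0}J_\delta \;=\; -\,\bbE\int_0^t\!\int_{\po\cO}\sgn(u_1^\tau-u_2^\tau)_+\bigl(\Abf(u_1^\tau)-\Abf(u_2^\tau)\bigr)\cdot \nu\,d\H^{d-1}\,d\sigma.
$$

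The most delicate step is to identify this boundary integral as zero, and this is precisely where the Neumann condition \eqref{e2.3} of Definition~\ref{D:2.2} is used. Taking \eqref{e2.3} with test functions $\psi\in C_c^\infty((0,T)\X\R^d)$ that need not vanish on $\po\cO$, and comparing with the interior identity (obtained by restricting to $\psi\in C_c^\infty((0,T)\X\cO)$), one can integrate by parts rigorously via the normal trace formula of Theorem~\ref{T:3.0} applied to the space-time divergence-measure field $(u_i,\Abf(u_i))$. The outcome is that the boundary normal trace of $\Abf(u_i)$ vanishes, and by the strong trace property this coincides with $\Abf(u_i^\tau)\cdot\nu$. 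Hence $\Abf(u_i^\tau)\cdot\nu=0$ a.s.\ on $(0,T)\X\po\cO$ for $i=1,2$, the limit of $J_\delta$ is zero, and \eqref{e4.3} follows.
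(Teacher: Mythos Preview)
Your approach is essentially the paper's: localize Theorem~\ref{T:4.1} via a boundary cutoff built from a strongly regular deformation and kill the flux term in the limit using the strong trace property. You are in fact more explicit than the paper, which tersely asserts that the flux integral vanishes by Theorem~\ref{T:3.1} alone; you correctly spell out that the Neumann condition \eqref{e2.3} is what forces $\Abf(u_i^\tau)\cdot\nu=0$ on $(0,T)\X\po\cO$. One small correction: $(u_i,\Abf(u_i))$ is not itself a divergence-measure field, since its space--time divergence equals $\Phi(u_i)\dot W$, which is not a Radon measure; the correct field (exactly as in the proof of Lemma~\ref{L:3.02}) is $\bigl(u_i-\sum_{k}\int_0^{\cdot} g_k(\cdot,u_i)\,d\b_k,\ \Abf(u_i)\bigr)$, whose divergence vanishes by the interior equation, and to which Theorem~\ref{T:3.0} and \eqref{e2.3} then yield the vanishing of the spatial normal trace.
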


 \begin{proof}  Let $\Psi:\po\cO\X[0,1]\to \bar \cO$ be a strongly regular deformation for $\po\cO$ and let $h:\bar\cO\to [0,1]$ be defined by $h(x)=s$, if $x\in\po\cO_s$, $h(x)=1$, if $x\in\cO\setminus\Psi(\po\cO\X [0,1])$, and $h(x)=0$, for $x\notin\cO$. Define
 \begin{equation} \label{comp}
 \varphi_\rho(x):= \min\{1,\, \frac1{\rho}h(x)\}.
 \end{equation}
 The inequality \eqref{e4.2} easily extends to $\phi$ Lipschitz vanishing on $\po\cO$. So we take $\phi=\varphi_\rho$ in \eqref{e4.2}. We then make $\rho\to0$ and observe that the first integral on the right-hand side of \eqref{e4.2} vanishes when $\rho\to0$
 because of the strong trace property in \eqref{e03.2}. Indeed, we see that
 $$
 \nabla\varphi_\rho(x)=\begin{cases} -\frac1{\rho} C(\Psi_{h(x)}(x))\nu(\Psi_{h(x)}(x)), &\text{for $x\in \Psi([0,\rho]\X\po\cO)$},\\ 0, & \text{otherwise},\end{cases}
 $$
 for a smooth function $C(y)$. But, from the regularity of the deformation, we deduce that $\nu(\Psi_s(x))\to \nu(x)$ in $L^1(\po\cO)$, as $s\to0$. Then,  Theorem~\ref{T:3.1} implies that the integral
\[
\bbE\int_0^t\int_{\cO} \sgn(u_1-u_2)_+\left(\Abf(u_1)-\Abf(u_2)\right)\cdot\nabla\varphi_\rho(x)\,dx\,ds
\]
vanishes as $\rho\to 0$, and since $\varphi_\rho\to 1$ a.e. as $\rho \to 0$, we conclude \eqref{e4.3}.
 \end{proof}

We remark that the a.s.\ continuity of the trajectories of a kinetic solution follows exactly as in \cite{DV}. In particular, in the statements of Theorem~\ref{T:4.1} and Theorem~\ref{T:4.2}, the conclusion holds a.s.\  for all $t\in[0,T]$.

 We conclude this section by establishing a maximum principle for the kinetic solution of \eqref{e1.1}--\eqref{e1.3}.

 \begin{theorem}[Maximum Principle] \label{T:4.3} Let $u$ be a kinetic solution of \eqref{e1.1}--\eqref{e1.3}, with $u_0$ satisfying \eqref{e1.5'}. Then, a.s., $a\le u(t,x)\le b$, a.e.\ $(t,x)\in (0,T)\X\cO$.
 \end{theorem}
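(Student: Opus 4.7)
The strategy is to derive the maximum principle as a direct corollary of the comparison principle, Theorem~\ref{T:4.2}, by comparing $u$ with the constant functions $v\equiv a$ and $v\equiv b$. The first task is to verify that these constants are themselves kinetic solutions of \eqref{e1.1}--\eqref{e1.3}. Two ingredients from the setup make this possible: hypothesis \eqref{e1.6} gives $\Abf(a)=\Abf(b)=0$, so the flux and its divergence vanish identically and the Neumann boundary condition is trivial; and the assumption $g_k\in C_c(\bar\cO\X(-M,M))$ together with $(-M,M)\subset[a,b]$ forces $g_k(x,a)=g_k(x,b)=0$ for all $x\in\bar\cO$ (since $a\le -M$ and $b\ge M$ both lie outside the open support interval), so that $\Phi(a)=\Phi(b)=0$ and $G^2(\cdot,a)=G^2(\cdot,b)=0$.

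Given this, I would verify Definition~\ref{D:2.2} for $v\equiv b$ with $f=1_{b>\xi}$ and trivial kinetic measure $m\equiv 0$ (analogously for $a$). In \eqref{e2.2}, the first two terms on the left combine to zero by integration in $t$, using $\varphi\in C_c^1([0,T)\X\cO\X\R)$; the flux term is
\[
\int_0^T\int_\cO\int_{-\infty}^b \abf(\xi)\cdot\nabla_x\varphi(t,x,\xi)\,d\xi\,dx\,dt = 0,
\]
by the divergence theorem (since $\varphi$ has compact support in $\cO$ and $\abf(\xi)$ is $x$-independent); the stochastic and Itô-correction terms on the right-hand side vanish because $g_k(\cdot,b)=G^2(\cdot,b)=0$. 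The Neumann identity \eqref{e2.3} follows directly from $\Abf(b)=0$, $g_k(\cdot,b)=0$, and integration by parts in $t$ against $\psi\in C_c^\infty((0,T)\X\R^d)$.

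With the constants established as kinetic solutions, I apply Theorem~\ref{T:4.2} with $u_1=u$, $u_2\equiv b$ to get
\[
\bbE\int_\cO (u(t,x)-b)_+\,dx \le \int_\cO (u_0(x)-b)_+\,dx = 0,
\]
by hypothesis \eqref{e1.5'}. Since the integrand is non-negative, $u\le b$, a.s., for a.e.\ $(t,x)\in(0,T)\X\cO$. Symmetrically, applying Theorem~\ref{T:4.2} with $u_1\equiv a$ and $u_2=u$ yields $u\ge a$ under the same qualifications.

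There is no genuine obstacle: the compatibility between flux, noise, and boundary condition needed to make the constants $a,b$ into solutions has been engineered into the hypotheses \eqref{e1.4*}, \eqref{e1.5'}, and \eqref{e1.6} precisely to support this argument. The only care required is bookkeeping to confirm each term in \eqref{e2.2}--\eqref{e2.3} vanishes for the constant profile, as sketched above.
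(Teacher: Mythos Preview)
Your proposal is correct and follows exactly the same approach as the paper: observe that the constants $a$ and $b$ are kinetic solutions (using $\Abf(a)=\Abf(b)=0$ and the vanishing of $g_k(\cdot,a),\,g_k(\cdot,b)$), then apply the comparison principle \eqref{e4.3} with the pairs $(a,u)$ and $(u,b)$. Your term-by-term verification of Definition~\ref{D:2.2} for the constant profiles is more detailed than the paper's one-line proof, but the argument is identical.
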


 \begin{proof} It suffices to observe that under the hypothesis \eqref{e1.6} and the fact that $g_k(x,u)$ vanishes for $u\notin (-M,M)$, $k\in\N$, the functions $v_1\equiv a$ and $v_2\equiv b$ are kinetic solutions of \eqref{e1.1}--\eqref{e1.3}.
 Therefore, applying \eqref{e4.3} first with  $u_1=a$, $u_2=u$ and then with  $u_1=u$ and $u_2=b$, we get the desired result.
 \end{proof}

\section{Existence: The parabolic approximation}\label{S:4}

For the existence of a kinetic solution to problem \eqref{e1.1}--\eqref{e1.3} we will perform the following steps. First, we establish the existence of the parabolic approximation and its kinetic formulation.
Second, we prove a spatial regularity estimate for the parabolic approximation which is independent of the vanishing artificial  viscosity.  Third, using the regularity obtained in the second, we show that the sequence
of parabolic approximate  solutions is compact in $L_\loc^1$.

We consider the following parabolic approximation of problem \eqref{e1.1}--\eqref{e1.3},
\begin{align}
& du^\ve+\nabla\cdot \Abf^\ve(u^\ve)\,dt -\ve\Delta u^\ve\,dt= \Phi_\ve(u^\ve)\,dW(t), \quad t>0, \ x\in\cO,\label{e5.1}\\
&u^\ve(0,x)=u_0^\ve(x),\qquad x\in\cO,\label{e5.2}\\
& (\Abf(u^\ve(t,x))-\ve\nabla u^\ve(t,x))\cdot \nu(x)=0,\qquad t>0,\  x\in\po\cO, \label{e5.3}
\end{align}
where $u_0^\ve$ is a smooth approximation of $u_0$, $u_0^\ve\in L^\infty(C_c^\infty(\cO))$, $a\le u_0^\ve\le b$,   $\Phi_\ve$ is a suitable Lipschitz approximation of $\Phi$ satisfying \eqref{e1.4} and \eqref{e1.5}  uniformly, with
 $g_ k^\ve$ and $G^\ve$  as in the case $\ve=0$, $g_k^\ve$ smooth with compact support contained in $\cV\X(-M,M)$.  Moreover, $g_k^\ve\equiv 0$ for $k\ge 1/\ve$.
Finally, $\Abf^\ve\in C^2(\R;\R^d)$, $\Abf^\ve(u)=\Abf(u)$, for $u\in[a,b]$,  and, setting $\abf^\ve=(\Abf^\ve)'$, we assume that $\abf^\ve\in L^\infty(\R;\R^d)$. The justification for the latter assumption is the fact that, by  Theorem~\ref{T:4.3},
any solution of \eqref{e1.1}--\eqref{e1.3} takes values in the interval $[a,b]$ and so, since our goal is to use the solution of \eqref{e5.1}--\eqref{e5.3} as an approximation as $\ve\to0$ to a kinetic solution of \eqref{e1.1}--\eqref{e1.3},  we may modify $\Abf$ outside of $[a,b]$ as we wish. In particular, we may assume that $\supp \Abf^\ve \subseteq [a-1,b+1]$, so that $\Abf^\ve$ has a primitive which is bounded uniformly in $\ve$.

\begin{theorem}\label{T:5.1} There exists a unique solution of  \eqref{e5.1}--\eqref{e5.3}, $u^\ve\in C([0,T];  L^2(\Om\X \cO))\cap L^2(\Om\X[0,T]; H^1(\cO))$, for any $\ve>0$.
Moreover, $u^\ve$ satisfies the following energy estimate
\begin{equation}\label{Energy}
\bbE\sup_{0\le t\le T} \|u^\ve(t)\|_{L^2(\cO)}^2+ 2\ve \bbE \int_0^T\|\nabla u^\ve(s)\|_{L^2(\cO)}^2\,ds  \le C(T)(\|u^\ve_0\|_{L^2(\cO)}^2+1).
\end{equation}
\end{theorem}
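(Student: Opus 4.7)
The plan is to combine a mild-formulation fixed-point argument---exploiting the smoothing properties of the Neumann heat semigroup $S^\ve(t)=e^{t\ve\Delta_N}$ established in the Appendix---with an \emph{a priori} $L^2$ energy estimate obtained via It\^o's formula. Since $\Abf^\ve$ has compact support in $[a-1,b+1]$, both $\Abf^\ve$ and $\abf^\ve=(\Abf^\ve)'$ are globally bounded, and $\Phi_\ve\colon L^2(\cO)\to L_2({\frak U};L^2(\cO))$ is globally Lipschitz with constant uniform in $\ve$; thus all nonlinearities are Lipschitz on $L^2(\cO)$, which is the structural reason the problem is tractable for fixed $\ve>0$.

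For local existence and uniqueness, I would write \eqref{e5.1}--\eqref{e5.3} in the mild form
\begin{equation*}
u^\ve(t)=S^\ve(t)u_0^\ve-\int_0^t S^\ve(t-s)\nabla\!\cdot\!\Abf^\ve(u^\ve(s))\,ds+\int_0^t S^\ve(t-s)\Phi_\ve(u^\ve(s))\,dW(s),
\end{equation*}
interpreting the drift by duality: tested against $\varphi\in H^1(\cO)$ the integrand equals $-\la\Abf^\ve(u^\ve(s)),\nabla S^\ve(t-s)\varphi\ra$, with no boundary contribution, because the no-flux Neumann condition \eqref{e5.3} is precisely what makes this duality formulation equivalent to \eqref{e5.1}--\eqref{e5.3}. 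Using the smoothing estimate $\|\nabla S^\ve(t)\varphi\|_{L^2}\lesssim(\ve t)^{-1/2}\|\varphi\|_{L^2}$ from the Appendix, a standard contraction argument in $C([0,T_0];L^2(\Om;L^2(\cO)))$ for $T_0=T_0(\ve)$ sufficiently small yields a unique mild solution, which by the smoothing of $S^\ve$ lies in $L^2([0,T_0];H^1(\cO))$ almost surely.

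Next I derive the energy estimate by applying It\^o's formula to $\|u^\ve(t)\|_{L^2(\cO)}^2$, justified either via Yosida regularization or via a Galerkin approximation in Neumann eigenfunctions. This gives
\begin{equation*}
\|u^\ve(t)\|^2+2\ve\!\int_0^t\!\|\nabla u^\ve\|^2\,ds=\|u_0^\ve\|^2+2\!\int_0^t\!\int_\cO\!\Abf^\ve(u^\ve)\cdot\nabla u^\ve\,dx\,ds+M_t+\!\int_0^t\!\|\Phi_\ve(u^\ve)\|_{L_2}^2\,ds,
\end{equation*}
where the boundary integrals coming from the viscous and flux terms cancel exactly by \eqref{e5.3}: the sum $2\int_{\po\cO}u^\ve\bigl[\ve\partial_\nu u^\ve-\Abf^\ve(u^\ve)\cdot\nu\bigr]\,dS$ vanishes. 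The remaining volume integral equals $2\int_{\po\cO}H^\ve(u^\ve)\cdot\nu\,dS$, where $H^\ve_i(u)=\int_0^u\Abf^\ve_i$ is uniformly bounded in $\ve$ thanks to the compact support of $\Abf^\ve$, so this residual is controlled by a deterministic constant. Combining Burkholder--Davis--Gundy for $M_t$ with $\|\Phi_\ve(u^\ve)\|_{L_2}^2\le D(1+\|u^\ve\|^2)$ from \eqref{e1.4} and Gronwall's inequality on $\bbE\sup_{s\le t}\|u^\ve(s)\|^2$ then yields \eqref{Energy}, and this \emph{a priori} bound extends the local solution to $[0,T]$.

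The main obstacle is the delicate exact cancellation of the two \emph{nonzero} boundary integrals in the It\^o identity: each is individually uncontrolled in terms of $\|u^\ve\|_{L^2(\cO)}$ alone, and it is the precise algebraic form of \eqref{e5.3} that annihilates their sum. A secondary technical point is the rigorous passage from the mild solution to the It\^o identity---standard but requiring either a Galerkin bridge or the factorization method for the stochastic convolution to produce enough regularity to apply It\^o in an infinite-dimensional setting.
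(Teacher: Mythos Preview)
Your approach is correct and shares the same skeleton as the paper's---a fixed-point argument for the mild formulation followed by the It\^o energy identity---but the implementation differs in two nontrivial respects.

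First, where you encode the inhomogeneous Neumann condition \eqref{e5.3} implicitly by interpreting $\nabla\!\cdot\!\Abf^\ve(v)$ as an element of $(H^1(\cO))^*=H_A^{-1/2}$ (so $S^\ve(t-s)$ acts via its extension to negative fractional domains), the paper instead decomposes the drift explicitly: the map $K$ is written as $S(t)u_0+K_1(v)+K_2(v)+w^v$, with $K_1(v)=-\int_0^tS(t-s)\nabla\!\cdot\!\Abf(v)\,ds$ using the $L^2$ divergence (hence zero Neumann data) and a separate auxiliary function $w^v$ solving the heat equation with zero initial data and boundary flux $\ve\partial_\nu w^v=\Abf(v)\cdot\nu$. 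Your single duality term and the paper's $K_1(v)+w^v$ coincide, but the paper's splitting makes the boundary cancellation in the It\^o identity (your ``main obstacle'') verifiable by approximating $v$ with smooth functions and using genuine $H^2$ regularity (Propositions~\ref{P:5.2}--\ref{P:5.3}), whereas your route leans on the abstract semigroup on $(H^1)^*$ and needs a Galerkin or Yosida bridge, as you note.

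Second, you run a short-time contraction in $C([0,T_0];L^2(\Om;L^2(\cO)))$---using the pointwise smoothing $\|\nabla S^\ve(t)\|\lesssim(\ve t)^{-1/2}$, which incidentally is not in the Appendix in that form but is standard---and then extend via the a priori bound. The paper instead contracts globally on $[0,T]$ in the larger space $\cE=L^2(\Om;C([0,T];L^2))\cap L^2(\Om\times[0,T];H^1)$ equipped with an exponentially weighted norm (Proposition~\ref{P:5.4}); this delivers the $H^1$ regularity directly from the fixed-point space rather than a posteriori. Your energy-estimate derivation, including the cancellation $\int_{\partial\cO}u^\ve[\ve\partial_\nu u^\ve-\Abf^\ve(u^\ve)\cdot\nu]=0$ and the bound on the residual $\int_{\partial\cO}H^\ve(u^\ve)\cdot\nu$ via the bounded primitive, is exactly the paper's computation (end of the proof of Theorem~\ref{T:5.1}, from \eqref{Ito1} with $\eta(s)=s^2/2$, $\psi\equiv1$).
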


The plan of the proof is to apply Banach's fixed point theorem.
 Let $\cE:=L^2(\Om;C([0,T]; L^2(\cO)))\cap L^2(\Om\X[0,T]; H^1(\cO))$.  Here, we consider $\Om\X[0,T]$ endowed with the $\s$-algebra of the predictable sets, that is, the $\s$-algebra generated by the sets of the form $\{0\}\X A_0$, $[s,t)\X A_s$, $A_0\in\F_0$, $A_s\in\F_s$.

 To begin with we endow $\cE$ with the following standard  norm
 \begin{equation}\label{norm1}
 \|v\|_{\cE}^2:= \Vert v\Vert_{L^2(\Om; C([0,T]; L^2(\mathcal{O})))}^2+\Vert \nabla v \Vert_{L^2(\Om\X[0,T]; H^1(\mathcal{O}))}^2.
 \end{equation}
Later on we will introduce another equivalent norm for $\cE$ for the purpose of proving the contraction property of the mapping $K$ defined subsequently.

Let us define
\begin{equation}\label{defK}
K(v)(t)= S(t)u_0-\int_0^t S(t-s)\nabla_x\cdot \Abf(v(s))\,ds +\int_0^tS(t-s)\Phi(v) dW(s) + w^v(t),
\end{equation}
where $S(t)$ is the semigroup generated by the problem
\begin{equation}\label{heateq}
\begin{cases} w_t-\ve \Delta w=0, \quad t>0, \ x\in\cO,\\
 w(0,x)=w_0(x),\quad x\in\cO,\\
\ve\nabla w(t,x)\cdot \nu(x)=0, \quad t>0,\ x\in\cO,
 \end{cases}
 \end{equation}
 and $w^v(t)$ is the solution of
 \begin{equation}\label{wv}
 \begin{cases} w_t^v-\ve \Delta w^v=0, \quad t>0, \ x\in\cO,\\
 w^v(0,x)=0,\quad x\in\cO,\\
\ve \nabla w^v\cdot\nu(x)=\Abf(v(t))\cdot \nu(x), \quad t>0,\ x\in\cO,
 \end{cases}
 \end{equation}
 and we have dropped  the $\ve$ for simplicity of notation.

The energy estimate for the heat equation with null Neumann condition gives
\begin{equation}\label{enheat}
\frac12\|S(t)w_0\|_2^2+\ve\int_0^t\|\nabla_x(S(s)w_0)\|_2^2\,ds= \frac12\|w_0\|_2^2.
\end{equation}

 \begin{lemma}\label{L:5.1}  If $v\in \cE$, then $K(v)\in \cE$. Moreover, if $v_k\to v$ in  $L^2(\Om\X[0,T] ;H^1(\cO))$, then $K(v_k)\to K(v)$ in $\cE$.

 \end{lemma}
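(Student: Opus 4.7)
The plan is to decompose $K(v)$ into its four summands
$$K(v)(t)=S(t)u_0-I_1(v)(t)+I_2(v)(t)+w^v(t),$$
with $I_1(v)(t):=\int_0^t S(t-s)\nabla_x\!\cdot\!\Abf^\ve(v(s))\,ds$ and $I_2(v)(t):=\int_0^t S(t-s)\Phi_\ve(v(s))\,dW(s)$, and bound each separately in both factors of the norm \eqref{norm1}. For the deterministic pieces I would use the Neumann-Laplacian smoothing estimates collected in the Appendix, while for the stochastic piece I would rely on It\^o's isometry and the maximal inequality for stochastic convolutions driven by a contraction semigroup on a Hilbert space.

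First, for $S(t)u_0$, the energy identity \eqref{enheat} together with $u_0\in L^\infty(\cO)\subset L^2(\cO)$ and the boundedness of $\cO$ immediately gives $S(\cdot)u_0\in C([0,T];L^2(\cO))\cap L^2(0,T;H^1(\cO))$ deterministically. For $I_1(v)$, since $\abf^\ve\in L^\infty(\R;\R^d)$ and $v\in\cE$, we have $\nabla_x\!\cdot\!\Abf^\ve(v)=\abf^\ve(v)\cdot\nabla v\in L^2(\Om\X[0,T]\X\cO)$; thus $I_1(v)$ is a mild solution of the heat equation with null Neumann data and $L^2$-source, and both the $C_tL^2_x$- and $L^2_tH^1_x$-bounds follow from the variational theory of the Neumann-Laplacian together with the Appendix estimates. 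For the boundary-lifting term $w^v$, I would control the datum $\Abf^\ve(v)\cdot\nu$ on $\po\cO$ via the trace inequality (the primitive of $\Abf^\ve$ being bounded uniformly in $\ve$), and invoke the Neumann lifting estimates of the Appendix to conclude $w^v\in L^2(\Om;C([0,T];L^2(\cO)))\cap L^2(\Om\X[0,T];H^1(\cO))$.

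For $I_2(v)$, condition \eqref{e1.4} applied to $\Phi_\ve$ yields $\Phi_\ve(v)\in L^2(\Om\X[0,T];L_2({\frak U};L^2(\cO)))$. Realising $I_2(v)$ as the strong solution of $du-\ve\Delta u\,dt=\Phi_\ve(v)\,dW$ with $u(0)=0$ and null Neumann data, It\^o's formula applied to $\|I_2(v)(t)\|_{L^2(\cO)}^2$ (the boundary term disappearing thanks to the Neumann condition) gives
\begin{equation*}
\bbE\|I_2(v)(t)\|_{L^2}^2 + 2\ve\,\bbE\int_0^t\|\nabla I_2(v)\|_{L^2}^2\,ds \;=\; \bbE\int_0^t\|\Phi_\ve(v(s))\|_{L_2({\frak U};L^2(\cO))}^2\,ds.
\end{equation*}
Combining this with the Burkholder--Davis--Gundy inequality and the standard maximal estimate for stochastic convolutions driven by a contraction semigroup on a Hilbert space (e.g.\ via the factorisation method) delivers the required $L^2(\Om;C([0,T];L^2(\cO)))\cap L^2(\Om\X[0,T];H^1(\cO))$ bound on $I_2(v)$. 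This shows $K(v)\in\cE$.

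For the continuity assertion I would repeat these four estimates for the differences $K(v_k)-K(v)$. Uniform Lipschitz continuity of $\Abf^\ve$ and boundedness of $\abf^\ve$, together with $v_k\to v$ in $L^2(\Om\X[0,T];H^1(\cO))$, give $\nabla_x\!\cdot\!\Abf^\ve(v_k)\to \nabla_x\!\cdot\!\Abf^\ve(v)$ in $L^2(\Om\X[0,T]\X\cO)$ by the product rule together with a Nemytskii-type / dominated-convergence argument, while \eqref{e1.5} yields $\Phi_\ve(v_k)\to \Phi_\ve(v)$ in $L^2(\Om\X[0,T];L_2({\frak U};L^2(\cO)))$. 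The step I expect to be the main obstacle is the convergence of $w^{v_k}-w^v$ in $\cE$: this reduces to convergence of the boundary data $\Abf^\ve(v_k)\cdot\nu\to \Abf^\ve(v)\cdot\nu$ on $\po\cO$ in a suitable negative-order Sobolev norm, and it is precisely here that the full $H^1$-convergence of $v_k$ (not merely $L^2$) and the Appendix Neumann lifting estimates have to be combined carefully.
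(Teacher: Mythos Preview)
Your proposal is correct and follows essentially the same decomposition and strategy as the paper. Two minor points of difference worth noting: (i) the Appendix treats only the \emph{homogeneous} Neumann semigroup, so the boundary-lifting term $w^v$ is not covered there; the paper handles it by a direct energy argument (multiply the equation by $w^v$, integrate, use the trace inequality $\|w^v\|_{L^2(\po\cO)}\le C\|w^v\|_{H^1(\cO)}$ together with Young's inequality and Gr\"onwall), which immediately yields the Lipschitz bound $\|w^{v_1}-w^{v_2}\|_{\cE}\le C\|v_1-v_2\|_{L^2(\Om\X[0,T];H^1(\cO))}$ --- so the step you flag as the main obstacle is in fact routine once one uses that $\|\Abf^\ve(v_1)-\Abf^\ve(v_2)\|_{L^2(\po\cO)}\le\Lip(\Abf^\ve)\|v_1-v_2\|_{L^2(\po\cO)}\le C\|v_1-v_2\|_{H^1(\cO)}$; (ii) for the stochastic convolution the paper bypasses your It\^o-formula route (which would first require justifying that $I_2(v)$ is a strong solution) and instead combines the cited maximal inequality for stochastic convolutions for the $C_tL^2_x$-bound with a direct It\^o-isometry + Fubini computation against the semigroup smoothing \eqref{enheat} for the $L^2_tH^1_x$-bound.
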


 \begin{proof} We write $K(v)=S(t)u_0+K_1(v)+K_2(v)+w^v$, where
 \begin{align*}
  &K_1(v)(t)=-\int_0^tS(t-s)\nabla_x\cdot \Abf(v(s))\,ds\\
 &K_2(v)(t)=\int_0^t S(t-s)\Phi(v(s))\,dW(s).
 \end{align*}

 Concerning $S(t)u_0$,  for any $h\in L^2(\cO)$,  \eqref{enheat} trivially gives
 \begin{equation}\label{e5.6}
 \|S(t)h\|_2\le \|h\|_2.
 \end{equation}
 %so
 %\begin{equation}\label{e5.7}
%\Big \|\int_0^t S(t-s) h(s)\,ds\Big\|_2^2\le T\,\int_0^T|h(s)|^2\,ds.
%\end{equation}
We denote
$$
\nabla_xS(t)h:=\nabla_x(S(t)h).
$$
We have, also from \eqref{enheat},
 \begin{equation}\label{e5.7'}
 \int_0^{T}\| \nabla_xS(t) h\|_2^2\,dt \le \frac1{2\ve}\|h\|_2^2,
\end{equation}
Therefore, we have
\begin{equation}\label{e5.8'}
\begin{aligned}
\|S(t)u_0\|_2^2&\le \|u_0\|_2^2,\\
\int_0^{T}\|S(t)u_0\|_2^2\,dt&\le T\|u_0\|_2^2,\\
\int_0^{T}\|\nabla_xS(t)u_0\|_2^2\,dt&\le\frac1{2\ve} \|u_0\|_2^2.
\end{aligned}
\end{equation}
In sum, we have
\begin{equation}\label{S_0.1}
\|S(t)u_0\|_{\cE}\le C(T) \|u_0\|_{L^2(\cO)},
\end{equation}
where, throughout this proof, $C(T)$ is a positive constant depending only on $T$ and the data of the problem \eqref{e5.1}--\eqref{e5.3}.

Concerning $K_1(v)$, again directly from  \eqref{enheat}, we get
\begin{equation}\label{e5.8''}
\left\|\int_0^t S(t-s)\nabla_x\cdot \Abf(v(s))\,ds\right\|_2^2 \le C T \int_0^{T}\| \nabla_xv(t)\|_2^2\,dt,
\end{equation}
\begin{equation}\label{e5.9}
\int_0^{T}\left\|\int_0^t S(t-s)\nabla_x\cdot \Abf(v(s))\,ds\right\|_2^2\,dt \le C T^2\int_0^{T}\| \nabla_xv(t)\|_2^2\,dt,
\end{equation}
where, throughout this proof,   $C>0$ is a constant only depending on $\ve$ and the given functions on \eqref{e5.1}
whose value may change from one line to the next.

Observe now that Cauchy-Schwarz,  \eqref{enheat} and Fubini yield, for any $h\in L^2([0,T];L^2(\cO))$, 
\begin{equation}\label{e5.8}
\begin{aligned}
&\int_0^{T}\left\|\int_0^t \nabla_xS(t-s)h(s)\,ds\right\|_2^2 \,dt\le \int_0^{T} t\int_0^t\|\nabla_xS(t-s)h(s)\|_2^2\,ds\,dt \\
&\qquad\qquad\le T\int_0^{T}\int_0^t\|\nabla_xS(t-s)h(s)\|_2^2\,ds\,dt\\
&\qquad\qquad\le T\int_0^{T}\int_s^{T} \|\nabla_xS(t-s)h(s)\|_2^2\,dt\,ds\\
&\qquad\qquad= \frac1{2\ve}T\int_0^{T}\|h(s)\|_2^2\,ds.\\
\end{aligned}
\end{equation}
Therefore, we get
\begin{equation}\label{e5.10}
\int_0^{T}\left\|\int_0^t \nabla_xS(t-s)\nabla\cdot \Abf(v(s))\,ds\right\|_2^2\,dt \le C T\int_0^{T}\|\nabla_xv(t)\|_2^2\,dt.
\end{equation}
In sum,  we have
\begin{equation}\label{K_1.1}
\|K_1(v)\|_{\cE}\le C(T) \|v\|_{L^2(\Om\X[0,T];H^1(\cO))}.
\end{equation}
Similarly, we obtain for $v_1,v_2\in\cE$,
\begin{equation}\label{K_1.2}
\|K_1(v_1)-K_1(v_2)\|_{\cE}\le C(T) \|\Abf(v_1)-\Abf(v_2)\|_{L^2(\Om\X[0,T];H^1(\cO))}.
\end{equation}
 We remark that \eqref{K_1.2} implies that the mapping $v\mapsto  \Abf(v)$ is continuous from $L^2(\Om\X[0,T];H^1(\cO))$ to $L^2(\Om\X[0,T];H^1(\cO))$ as can be easily verified.

Concerning $K_2(v)$, we need to apply  the important maximal estimate for stochastic convolution (see \cite{Tu, Ko1, Ko2}; see also  \cite{DPZ, Brz,Ha2}). Using the mentioned inequality, we have
\begin{equation}\label{e5.10'}
\begin{aligned}
& \bbE\sup_{0\le t\le T}\biggl\|\int_0^t S(t-s)\Phi(v(s))\,dW(s)\biggr\|_2^2\\
&\qquad\le C\bbE \sum_{1\le k\le N} \int_0^T\| g_k(\cdot, v(s))\|_2^2\,ds\\
& \qquad \le C\bbE \int_0^{T}(1+\|v(s)\|_2^2)\,ds.\\
\end{aligned}
\end{equation}
On the other hand,
\begin{equation}\label{e5.11}
\begin{aligned}
&\int_0^{T}\bbE\biggl\|\int_0^t S(t-s)\Phi(v(s))\,dW(s)\biggr\|_2^2\,dt\\
&\qquad\le\bbE \int_0^{T}\sum_{1\le k\le N} \int_0^t\|S(t-s) g_k(\cdot, v(s))\|_2^2\,ds\,dt\\
& \qquad \le CT\,\bbE \int_0^{T}(1+\|v(s)\|_2^2)\,ds,\\
\\ \\
&\int_0^{T}\bbE\biggl\|\int_0^t \nabla_xS(t-s)\Phi(v(s))\,dW(s)\biggr\|_2^2\,dt \\
&\qquad\le C\bbE\int_0^{T} \int_0^t \sum_{1\le k\le N}\|\nabla_x S(t-s)g_k(\cdot,v(s))\|_2^2\,ds\,dt\\
&\qquad \le \frac{C}{2\varepsilon} \bbE \int_0^{T}\sum_{1\le k\le N} \|g_k(\cdot, v(t))\|_2^2\,dt\\
&\qquad \le \frac{C}{2\varepsilon}\bbE \int_0^{T} (  1+\|v(t)\|_2^2)\,dt.
\end{aligned}
\end{equation}
In particular, we also have
\begin{equation}\label{K_2.1}
\|K_2(v)\|_{\cE}\le C(T)(1+ \|v\|_{L^2(\Om\X[0,T];H^1(\cO))}),
\end{equation}
and for $v_1,v_2\in\cE$, observing that $\|g_k(\cdot,v_1(t))-g_k(\cdot,v_2(t))\|_2\le C\|v_1(t)-v_2(t)\|_2$, we get
\begin{equation}\label{K_2.2}
\|K_2(v_1)-K_2(v_2)\|_{\cE}\le C(T) \|v_1-v_2\|_{L^2(\Om\X[0,T];H^1(\cO))}.
\end{equation}

Finally, for $w^v$  we have the following. First, assume that $v\in C_c^\infty( (0,T)\X\bar \cO)$, in which case also $\Abf^\ve(v)\in C_c^\infty((0,T)\X\bar\cO)$ by the hypotheses on $\Abf^\ve(v)$.
In this case, the problem has a classical smooth solution.  Multiply the equation for $w^v$ by $w^v$, integrate in $\cO$ to get
\begin{align*}
&\frac12\frac{d}{ds}\|w^v(s)\|_2^2  +\ve\|\nabla w^v(s)\|_2^2=  \int_{\po\cO}w^v(s,\om) \Abf(v(s,\om))\cdot\nu\,d\H^{d-1}(\om)\\
&\qquad \le  C\|w^v(s)\|_{L^2(\partial\cO)}\|\Abf(v)(s)\|_{L^2(\partial\cO)}\\
 &\qquad \le C \|w^v(s)\|_{H^1}\|\Abf(v)(s)\|_{H^1}\\
 &\qquad \le \frac\ve2\|w^v(s)\|_{H^1}^2+C\|\Abf(v)(s)\|_{H^1}^2
 \end{align*}
 where we used the trace theorem  for the Sobolev space $H^1(\cO)$ and standard estimates. Integrating in $t$ we get
 \begin{equation*}
 \frac12 \|w^v(t)\|_2^2+\ve \int_0^t\|\nabla w^v(s)\|_2^2\,ds\le \frac\ve2\int_0^t\|w^v(s)\|_{H^1}^2\,ds+ C\int_0^t\|\Abf(v(s))\|_{H^1}^2\,ds.
 \end{equation*}
 So, using Gr\"onwall, we get
 \begin{equation}\label{ewv.1}
 \frac12 \|w^v(t)\|_2^2+\frac\ve2 \int_0^t\|\nabla w^v(s)\|_2^2\,ds\le C(T)\int_0^t\|\Abf(v(s))\|_{H^1}^2\,ds.
 \end{equation}
 Similarly, for $v^1,v^2\in C_c^\infty((0,T)\X\bar\cO)$, we get
  \begin{multline}\label{ewv.2}
 \frac12 \|w^{v_1}(t)-w^{v_2}(t)\|_2^2+\frac\ve2 \int_0^t\|\nabla w^{v_1}(s)-\nabla w^{v_2}(s)\|_2^2\,ds\\ \le C(T)\int_0^t\|v_1(s)-v_2(s)\|_{H^1}^2\,ds.
 \end{multline}
 For the latter inequality we used that $\|\Abf(v_1)-\Abf(v_2)\|_{L^2(\po\cO)}\le \Lip(\Abf)\|v_1-v_2\|_{L^2(\cO)}\le C\|v_1-v_2\|_{H^1(\cO)}$.
 Now, given $v\in L^2(\Om\X[0,T]; H^1(\cO))$, a.s.\ given $\om\in\Om$, we can approximate $v(\om)$ by a sequence $v_k\in C_c((0,T)\X\bar\cO)$, and from \eqref{ewv.1} we see that $w^{v_k}$ converges in $C([0,T];L^2(\cO))\cap L^2(0,T;H^1(\cO))$
 to a certain $w^v$. Also, given any $\varphi\in  H^1(\cO)$, for each $k\in\N$, we get
 \begin{multline*}
 \int_{\cO}w^{v_k}(t,x)\varphi(x)\,dx +\ve \int_0^t\int_{\cO}\nabla w^{v_k}(s,x)\cdot\nabla\varphi(x)\, dx\,ds\\=\int_0^t\int_{\po\cO} \varphi(y)\, \Abf(v_k)\cdot\nu(y)\,d\H^{d-1}(y)\,ds.
 \end{multline*}
 So, making $k\to\infty$, observing that $v_k\to v$ and $w^{v_k}\to w^v$   in $L^2((0,T)\X\po\cO)$  by the continuity of the trace operator from $L^2(0,T;H^1(\cO))$ to $L^2((0,T)\X\po\cO)$, we get  that
 $w^v$ satisfies \eqref{wv} in the following weak sense: for all $\varphi\in\ H^1(\cO)$ we have
 \begin{multline}\label{ewv.3}
  \int_{\cO}w^{v}(t,x)\varphi(x)\,dx +\ve \int_0^t\int_{\cO}\nabla w^{v}(s,x)\cdot\nabla\varphi(x)\, dx\,ds\\=\int_0^t\int_{\po\cO} \varphi(y)\, \Abf(v)\cdot\nu(y)\,d\H^{d-1}(y)\,ds.
 \end{multline}
By passing to the limit we also see that \eqref{ewv.1} and \eqref{ewv.2} are satisfied for any $v\in L^2(0,T; H^1(\cO))$. Thus, from \eqref{ewv.1} and \eqref{ewv.2} we get
\begin{equation}\label{Wv.1}
\|w^v\|_{\cE}\le C(T)\|v\|_{L^2(\Om\X[0,T]; H^1(\cO))},
\end{equation}
and
\begin{equation}\label{Wv.2}
\|w^{v_1}-w^{v_2}\|_{\cE}\le C(T)\|v_1-v_2\|_{L^2(\Om\X[0,T]; H^1(\cO))},
\end{equation}
for $v, v_1, v_2\in L^2(\Om\X[0,T];H^1(\cO))$.

Now, putting together the inequalities  \eqref{S_0.1}, \eqref{K_1.1}, \eqref{K_1.2}, \eqref{K_2.1}, \eqref{K_2.2},  \eqref{Wv.1}, \eqref{Wv.2}  the  proof of the lemma is finished.

\end{proof}

 Now we apply Propositions~\ref{reg1}--\ref{P:5.1} in Section~\ref{S:7} to analyze the map $K(v)$.

\begin{proposition}\label{P:5.2}
Assume $v \in L^\infty(\Omega; C_c^\infty((0,T)\times \overline{\mathcal{O}})$. We have the following:
\begin{enumerate}
\item[(i)] $K(v) \in \cE_*$, where
\begin{equation}\label{e5.5}
 \cE_*:=L^2(\Om; C([0,T]; L^2(\mathcal{O}))) \cap L^2(\Omega\X[0,T];H^2(\mathcal{O})).
\end{equation}
\item[(ii)]   $K(v)$ satisfies the following  initial-boundary value problem for  a stochastic equation with coefficients taking values in $L^2(\cO)$
\begin{equation}\label{eP5.2.0}
\begin{cases}
d K(v)(t)-\ve \Delta K(v)(t)\,dt= -\nabla\cdot \Abf(v(t))\,dt +\Phi(v(t))\,dW(t)\\
K(v)(0)=u_0,\\
\ve\po_{\nu} K(v)(t)\lfloor\po\cO=\Abf(v)(t)\cdot\nu.
\end{cases}
\end{equation}
\end{enumerate}
 More generally,  for all $v\in \cE$, given $\varphi\in C^\infty(\bar\cO)$, almost surely we have
\begin{multline}\label{eP5.2.1}
\int_{\cO}K(v)(t)\varphi(x)\,dx+\ve\int_0^t\int_{\cO}\nabla K(v)\cdot\nabla\varphi(x)\,dx\,dt =\int_{\cO}u_0(x)\varphi(x)\,dx \\+\int_0^t\int_{\cO} \Abf(v(s))\cdot\nabla\varphi(x)\,dx\,dt+ \int_0^t\int_{\cO}\varphi(x)\Phi(v(s))\,dx\, dW(s).
\end{multline}

\end{proposition}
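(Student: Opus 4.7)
The plan is to decompose $K(v)=S(t)u_0+K_1(v)+K_2(v)+w^v$ as in the proof of Lemma~\ref{L:5.1} and analyze each summand separately, relying on the smoothing properties of the Neumann heat semigroup collected in Propositions~\ref{reg1}--\ref{P:5.1} of the appendix. The identity \eqref{eP5.2.1} is, morally, the weak formulation of \eqref{eP5.2.0} obtained after integrating by parts in the Laplacian; my strategy is therefore to verify \eqref{eP5.2.1} directly from the mild formula \eqref{defK}, and then, in the smooth case, to deduce \eqref{eP5.2.0} together with the Neumann boundary condition as a consequence of \eqref{eP5.2.1} combined with the $H^2$-regularity from part~(i).

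For part~(i), I would check that each summand lies in $\cE_*$ when $v$ is smooth. For $S(\cdot)u_0$ with $u_0\in L^2(\cO)$, the appendix estimates directly yield $C([0,T];L^2(\cO))\cap L^2(0,T;H^2(\cO))$. For $K_1(v)$, pathwise parabolic maximal regularity applied to the smooth bounded source $\nabla\cdot\Abf(v(s))$ produces the same spaces. For $K_2(v)$, It\^o's isometry combined with the $H^2$-smoothing estimate for $S(t)$ and the finiteness of the sum $\sum_{k\le 1/\ve}$ yields the continuous $L^2$-valued trajectories and the required $H^2$-integrability. For $w^v$, classical parabolic theory with smooth Neumann data $\Abf(v)\cdot\nu$ provides $w^v\in\cE_*$.

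To prove the variational identity, I would test each of the four summands against $\varphi\in C^\infty(\bar\cO)$ and add their contributions to the left-hand side of \eqref{eP5.2.1}. Using that $\po_\nu S(t)h=0$ on $\po\cO$, the piece $S(t)u_0$ contributes $\la u_0,\varphi\ra$, the piece $K_1(v)$ contributes $-\int_0^t\la\nabla\cdot\Abf(v(s)),\varphi\ra\,ds$, and the piece $K_2(v)$ contributes $\int_0^t\la\Phi(v(s)),\varphi\ra\,dW(s)$, this last via a stochastic Fubini argument. The piece $w^v$, thanks to its inhomogeneous Neumann data, contributes the boundary integral $\int_0^t\int_{\po\cO}\varphi\,\Abf(v)\cdot\nu\,d\H^{d-1}\,ds$; combining this with the $K_1(v)$ contribution via $\int_{\po\cO}\varphi\,\Abf(v)\cdot\nu\,d\H^{d-1}=\la\nabla\cdot\Abf(v),\varphi\ra+\la\Abf(v),\nabla\varphi\ra$ produces precisely $\int_0^t\la\Abf(v(s)),\nabla\varphi\ra\,ds$, yielding \eqref{eP5.2.1} for smooth $v$. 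The general statement for $v\in\cE$ then follows by approximating $v$ by smooth functions and passing to the limit using the continuity of $K$ in $\cE$ from Lemma~\ref{L:5.1}, together with the obvious continuity in $v$ of each side of \eqref{eP5.2.1}.

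The main obstacle I expect is the rigorous handling of the stochastic convolution $K_2(v)$: interchanging $\la\cdot,\varphi\ra$ and $\la\nabla\cdot,\nabla\varphi\ra$ with the stochastic integral requires a stochastic Fubini argument whose integrability hypotheses must be checked from the appendix bounds on $S(t-s)g_k^\ve(\cdot,v(s))$ and its gradient. Fortunately, the truncation $g_k^\ve\equiv 0$ for $k\ge 1/\ve$ together with the compactly supported smoothness of $v$ reduces the series to a finite sum with uniform bounds, making this step essentially routine.
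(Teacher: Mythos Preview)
Your proposal is correct and follows essentially the same route as the paper: decompose $K(v)$ into its four pieces, invoke the appendix smoothing results (Propositions~\ref{reg1}--\ref{P:5.1}) for the $H^2$-regularity of $K_1$ and $K_2$, derive the weak identity \eqref{eP5.2.1} by testing each piece against $\varphi$ (using the stochastic Fubini theorem for $K_2$ and combining the boundary term from $w^v$ with the divergence term from $K_1$), and finally pass to the limit for general $v\in\cE$ via Lemma~\ref{L:5.1}. One small imprecision: the appendix estimates do \emph{not} give $S(\cdot)u_0\in L^2(0,T;H^2(\cO))$ for mere $u_0\in L^2(\cO)$ --- here one must use, as the paper does, that $u_0=u_0^\ve$ is smooth.
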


\begin{proof}
 {} From the assumption on $v$ it follows from what was seen in the proof of Lemma~\ref{L:5.1} that $w^v\in \cE_*$.
 Also, from the smoothness of $u_0$ it follows immediately $S(t)u_0\in\cE_*$. Concerning $K_1(v)$ and $K_2(v)$ the fact that $K_1(v), K_2(v)\in \cE_*$ follows from Lemma~\ref{L:5.1} and Propositions~\ref{reg1}, \ref{reg2} applied to the operator
 defined in \eqref{defA}, taking into account Proposition~\ref{P:5.1}, which concludes the proof.

 Concerning the proof of \eqref{eP5.2.1}, first,  since $S(t)u_0$ is a classical solution of the  heat equation, we clearly have
 $$
 \int_{\cO}\varphi(x) S(t)u_0\,dx+\int_0^t\int_{\cO}\nabla S(s)u_0\cdot \nabla \varphi\, dx\,ds=\int_{\cO}\varphi(x) u_0(x)\,dx.
 $$
 Now,  by the definition of $S(t)$ we see that $K_1(v)$ is a solution of the problem
\begin{equation*}
\begin{cases}
\tilde w_t-\ve\Delta \tilde w= -\nabla_x\cdot \Abf(v(t)),\\
\ve\po_{\nu} \tilde w=0,\\
\tilde w(0,x)=0.
\end{cases}
\end{equation*}
Therefore, we have
\begin{multline*}
\int_{\cO} K_1(v(t))\varphi(x)\,dx+\ve\int_0^t \int_{\cO} \nabla K_1(v(s)\cdot \nabla \varphi(x)\,dx\,ds\\
=-\int_0^t\int_{\cO} \varphi\nabla \cdot \Abf(v(s))\,dx\,ds\\
=\int_0^t\int_{\cO}\nabla_x\varphi(x)\cdot \Abf(v(s))\,dx\,ds-\int_0^t\int_{\po\cO}\varphi(y) \Abf(v(s))\cdot\nu(y)\,d\H^{d-1}(y)\,ds.
\end{multline*}
As for $K_2(v)$ we have the following. First, we observe that $S(t-s)\Phi(v(s))$ solves the problem
\begin{equation*}
\begin{cases}
 z_t-\ve\Delta  z= 0,\\
\ve\po_{\nu} z=0,\\
 z(s,x)=\Phi(v(s)),
\end{cases}
\end{equation*}
and so
\begin{multline*}
\int_{\cO} \varphi(x) S(t-s)\Phi(v(s))\,dx+\ve\int_s^t\int_{\cO} \nabla\varphi(x)\cdot \nabla S(\tau-s)\Phi(v(s))\,dx\,d\tau\\=\int_{\cO}\varphi(x) \Phi(v(s))\,dx.
\end{multline*}
Integrating in $dW(s)$ from 0 to $t$, using the stochastic Fubini theorem (see, e.g., \cite{DPZ}, \cite{GaMan}), we obtain
\begin{multline*}
\int_{\cO} \varphi(x) K_2(v)(t)\,dx+\ve\int_0^t \int_s^t\int_{\cO} \nabla\varphi(x)\cdot \nabla S(\tau-s)\Phi(v(s))\,dx\,d\tau\,dW(s)\\= \int_0^t\int_{\cO}\varphi(x) \Phi(v(s))\,dx\,dW(s).
\end{multline*}
Now, again by the stochastic Fubini theorem, we have
\begin{multline*}
\int_0^t \int_s^t\int_{\cO} \nabla\varphi(x)\cdot \nabla S(\tau-s)\Phi(v(s))\,dx\,d\tau\,dW(s)\\
=\int_0^t \int_{\cO} \nabla \varphi \cdot \nabla\left( \int_0^\tau S(\tau-s) \Phi(v(s))\,dW(s) \right)\, dx\,d\tau\\
=\int_0^t\int_{\cO} \nabla\varphi(x)\cdot \nabla K_2(v(s))\,dx\,ds,
\end{multline*}
 and so we get
 \begin{multline*}
\int_{\cO} K_2(v(t))\varphi(x)\,dx+\ve\int_0^t \int_{\cO} \nabla K_2(v(s)\cdot \nabla \varphi(x)\,dx\,ds\\
\\= \int_0^t\int_{\cO}\varphi(x) \Phi(v(s))\,dx\,dW(s).
\end{multline*}
Now, putting together the identities obtained for $S(t)u_0$, $K_1(v)$,  $K_2(v)$ and recalling \eqref{ewv.3}, we obtain that $K(v)$ satisfies \eqref{eP5.2.1}.
Since, for $v\in L^\infty(\Om; C_c^\infty((0,T)\X\bar\cO)$, $K(v)\in\cE_*$ the above integral equation easily implies \eqref{eP5.2.0}.
Finally, given  $v\in\cE$, we may approximate it in $L^2(\Om\X[0,T]; H^1(\cO))$ by a sequence $\{v_k\}\subset L^\infty(\Om;C_c^\infty((0,T)\X\bar\cO))$, write the integral identity \eqref{eP5.2.1} with $v_k$ instead of $v$ and
pass to the limit in $L^2(\Om\X[0,T]; H^1(\cO))$ when $k\to\infty$. Using the continuity property for $K(v)$ in the statement of Lemma~\ref{L:5.1}, we obtain \eqref{eP5.2.1} for $v\in\cE$, which finishes the proof.

  \end{proof}

The following proposition is a decisive step in the proof of the contraction property of $K$ on $\cE$ endowed with a suitable norm.

\begin{proposition} \label{P:5.3}
Given $\psi \in C^1(\overline{\mathcal{O}})$, $\eta \in C^2(\R)$ with $\eta'' \in L^\infty$,  $v^{1}, v^{2} \in \cE$, it holds almost surely and for every $0\leq t \leq T$, the equalities
\begin{align}
    \int_{\mathcal{O}} &\eta (K (v^{1})(t,x)) \psi(x)\, dx = \int_{\mathcal{O}} \eta(u_0(x)) \psi(x)\, dx \nonumber\\
    &-\varepsilon \int_0^t\int_{\mathcal{O}} \eta'' (K(v^1)(s,x)) |\nabla K (v^{1})(s,x)|^2 \psi(x)\, dx\, ds  \nonumber\\
    &+\int_0^t \int_\mathcal{O} \eta''(K (v^1)(s,x)) \nabla K(v^{1})(s,x)\cdot \Abf(v^1(s,x)) \psi(x) dx ds \nonumber\\\
    &-\int_0^t \int_\mathcal{O} \eta'(K(v^1)(s,x)) ( \varepsilon \nabla K(v^{1})(s,x) - \Abf(v^1(s,x)) )\cdot \nabla \psi(x)\, dx\, ds \nonumber\\
    &+  \int_0^t \int_\mathcal{O} \eta'((K(v^{1})(s,x))\Phi(v^1(s))\psi(x) \,dx\,dW(s)\nonumber\\
    &+ \frac{1}{2}  \int_0^t \int_\mathcal{O} \eta''(K (v^{1})(s,x))G^2(x,v^1(s,x))\psi(x)\, dx \, ds \label{Ito1}
\end{align}
where $G(x,v)$ is as in \eqref{e1.4}, and
\begin{align}
    \int_{\mathcal{O}} &\eta\big(K(v^{1})(t,x) - K(v^{2})(t,x)\big) \psi(x) \,dx  \nonumber\\
    &=-\varepsilon \int_0^t\int_{\mathcal{O}} \eta''\big(K (v^{1})(s,x) - K(v^{2})(s,x)\big) \nonumber \\
    &\>\>\>\>\>\>\>\>\>\>\>\>\>\>\>\>\>\>\>\>\>\>\>\>\>\>\>\>  |\nabla K (v^{1})(s,x) - \nabla K(v^{2})(s,x) |^2 \psi(x) \,dx\, ds  \nonumber\\
    &+\int_0^t \int_\mathcal{O} \eta''\big(K(v^1)(s,x) - K(v^2)(s,x)\big)  \big(\nabla K( v^{1})(s,x) - \nabla K(v^{2})(s,x)\big)\nonumber
    \\ &\>\>\>\>\>\>\>\>\>\>\>\>\>\>\>\>\>\>\>\>\>\>\>\>\>\>\>\> \cdot \big(\Abf(v^1(s,x)) - \Abf(v^2(s,x)) \big) \psi(x) \,dx \,ds \nonumber\\
    &-\int_0^t \int_\mathcal{O} \eta'\big(K(v^1)(s,x) - K(v^2)(s,x)\big) \big( \varepsilon \nabla K( v^{1})(s,x) - \varepsilon \nabla K( v^{2})(s,x) \nonumber \\
    &\>\>\>\>\>\>\>\>\>\>\>\>\>\>\>\>\>\>\>\>\>\>\>\>\>\>\>\>-  \Abf(v^1(s,x)) + \Abf(v^2(s,x)) \big)\cdot \nabla \psi(x) \,dx\, ds \nonumber\\
    &+ \int_0^t \int_\mathcal{O} \eta'\big(K( v^{1})(s,x) - K(v^{2})(s,x)\big) (\Phi(v^1(s))-\Phi(v^2(s)))\,dx\,dW(s)\nonumber\\
    &+ \frac{1}{2} \int_0^t \int_\mathcal{O} \eta''\big(K(v^{1})(s,x) - K(v^{2})(s,x)\big) \nonumber
    \\ &\>\>\>\>\>\>\>\>\>\>\>\>\>\>\>\>\>\>\>\>\>\>\>\>\>\>\>\> \sum_{k=1}^\infty  \big|g_k(x, v^1(s,x)) - g_k(x,v^2(s,x))\big|^2\psi(x) \,dx\, ds. \label{Ito2}
\end{align}

\end{proposition}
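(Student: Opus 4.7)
The strategy is to apply an infinite-dimensional It\^o formula to the functional $F:L^2(\cO)\to\R$ given by $F(u):=\int_\cO \eta(u(x))\psi(x)\,dx$, evaluated along the $L^2(\cO)$-valued semimartingales $K(v^1)$ for \eqref{Ito1} and $w:=K(v^1)-K(v^2)$ for \eqref{Ito2}. Note that $F'(u)=\eta'(u)\psi$ and $F''(u)(h,h)=\int_\cO\eta''(u)h^2\psi\,dx$ are continuously defined thanks to the hypotheses on $\eta$ and $\psi$.

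First consider smooth data $v^1,v^2\in L^\infty(\Om;C_c^\infty((0,T)\X\bar\cO))$. Proposition~\ref{P:5.2} yields $K(v^i)\in\cE_*$ and the strong SPDE \eqref{eP5.2.0}. Applying the standard It\^o formula for Hilbert-space-valued semimartingales (as in \cite{DPZ}) to $t\mapsto F(K(v^1)(t))$ gives
\begin{align*}
F(K(v^1)(t))&=F(u_0)+\int_0^t\!\!\int_\cO \eta'(K(v^1))\psi\bigl[\ve\Delta K(v^1)-\nabla\!\cdot\!\Abf(v^1)\bigr]\,dx\,ds\\
&\quad+\int_0^t\!\!\int_\cO \eta'(K(v^1))\psi\,\Phi(v^1)\,dx\,dW(s)\\
&\quad+\tfrac12\int_0^t\!\!\int_\cO\eta''(K(v^1))\psi\,G^2(\cdot,v^1)\,dx\,ds,
\end{align*}
where the last summand is $\tfrac12\operatorname{Tr}(F''(K(v^1))\Phi(v^1)\Phi(v^1)^\ast)=\tfrac12\int\eta''(K(v^1))\psi\sum_k g_k^2\,dx$. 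Integrating the drift by parts produces the two boundary integrals
\[
\ve\!\int_{\po\cO}\!\eta'(K(v^1))\psi\,\po_\nu K(v^1)\,dS\quad\text{and}\quad -\!\int_{\po\cO}\!\eta'(K(v^1))\psi\,\Abf(v^1)\!\cdot\!\nu\,dS,
\]
which \emph{cancel exactly} by the Neumann identity $\ve\po_\nu K(v^1)\lfloor\po\cO=\Abf(v^1)\!\cdot\!\nu$. Expanding $\nabla[\eta'(K(v^1))\psi]=\eta''(K(v^1))\psi\nabla K(v^1)+\eta'(K(v^1))\nabla\psi$ and regrouping gives precisely \eqref{Ito1}.

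For general $v\in\cE$, approximate $v$ in $L^2(\Om\X[0,T];H^1(\cO))$ by a sequence $\{v_n\}\subset L^\infty(\Om;C_c^\infty((0,T)\X\bar\cO))$. By Lemma~\ref{L:5.1}, $K(v_n)\to K(v)$ in $\cE$, in particular $\nabla K(v_n)\to\nabla K(v)$ strongly in $L^2(\Om\X[0,T]\X\cO)$. The at-most-quadratic growth of $\eta$, at-most-linear growth of $\eta'$, Lipschitz continuity of $\Abf$ and of $\Phi$ in the Hilbert-Schmidt norm, together with Burkholder-Davis-Gundy for the stochastic integrals, let every term of the smooth-case identity pass to the limit, yielding \eqref{Ito1} for $v\in\cE$. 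Identity \eqref{Ito2} is proved by the same argument applied to $w:=K(v^1)-K(v^2)$, which by linearity of Proposition~\ref{P:5.2} satisfies
\[
dw-\ve\Delta w\,dt=-\nabla\!\cdot\!(\Abf(v^1)-\Abf(v^2))\,dt+(\Phi(v^1)-\Phi(v^2))\,dW,\quad w(0)=0,
\]
with inhomogeneous Neumann datum $\ve\po_\nu w\lfloor\po\cO=(\Abf(v^1)-\Abf(v^2))\!\cdot\!\nu$; the It\^o correction then automatically involves $\sum_k|g_k(x,v^1)-g_k(x,v^2)|^2$ in place of $G^2$.

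The main technical obstacle is legitimizing the infinite-dimensional It\^o formula in the presence of the inhomogeneous Neumann boundary condition. This is precisely why Proposition~\ref{P:5.2}(i) was needed: the $H^2$-regularity there ensures that $\Delta K(v^i)\in L^2(\Om\X[0,T];L^2(\cO))$ and gives a bona fide $L^2$-trace of $\po_\nu K(v^i)$, so Green's identity is valid and the crucial cancellation of the two boundary integrals through the Neumann condition can be invoked. Once the smooth-case identity is established with this cancellation, the extension to general $v\in\cE$ is a routine density argument.
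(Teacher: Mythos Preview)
Your proposal is correct and follows essentially the same approach as the paper: first treat smooth $v^i\in L^\infty(\Om;C_c^\infty((0,T)\times\bar\cO))$ using the $H^2$-regularity and strong SPDE from Proposition~\ref{P:5.2}, apply It\^o's formula, integrate by parts, cancel the two boundary terms via the Neumann identity $\ve\partial_\nu K(v)=\Abf(v)\cdot\nu$, and then extend to general $v\in\cE$ by density using the continuity of $K$ from Lemma~\ref{L:5.1}. The only cosmetic difference is that the paper writes the It\^o expansion \eqref{equ3} ``pointwise'' in $L^2(\cO)$ and then multiplies by $\psi$ and integrates, whereas you apply the functional It\^o formula directly to $F(u)=\int_\cO\eta(u)\psi\,dx$; these are equivalent formulations of the same step.
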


\begin{proof}
 Let us assume initially  that $v^1$ and $v^2 \in L^\infty(\Omega; C_c^\infty((0,T)\times\overline{\mathcal{O}})$. Then, by the previous proposition, $u^1(t) = K(v^1)$ lies in $C([0,T];L^2(\Om\X\mathcal{O})) \cap L^2(\Om\X[0,T]; H^2(\mathcal{O}))$, so that one may write in $L^2(\mathcal{O})$ that almost surely and for every $0\leq t \leq T$,
\begin{multline}
  u^1(t,x) = u_0(x) + \varepsilon\int_0^t \Delta u^1(s,x) ds - \int_0^t \operatorname{div}_x \Abf(v^1(s,x))ds \\+ \int_0^t \Phi(x, v^1(s,x))dW(s) \label{equ1}
\end{multline}
with
\begin{equation}
\varepsilon\frac{\partial u^1 }{\partial \nu}(t,x) = \Abf(v^1(t,x)) \text{ in the sense of traces in } \partial \mathcal{O}. \label{equ2}
\end{equation}

Hence, by  It\^ o formula (see, e.g., \cite{DPZ}), almost surely and for all $0 \leq t \leq T$, we have
\begin{align}
  \eta(u^1(t,x)) &= \eta(u_0(x)) + \varepsilon\int_0^t \eta'(u^1(s,x)) \Delta u^1(s,x) \,ds \nonumber \\
  &- \int_0^t \eta'(u^1(s,x)) \nabla_x \cdot \Abf(v^1(s,x))\, ds  \nonumber \\
  &+\int_0^t\psi(x) \eta'(u^1(s,x)) \Phi(v^1(s,x))\, dW(s)  \nonumber \\
  &+ \frac{1}{2} \int_0^t \psi(x)\eta''(u^1(s,x)) G^2(x,v^1(s,x))\, ds. \label{equ3}
\end{align}
We now multiply  equation \eqref{equ3}  by $\psi \in C^1 (\bar\cO)$, integrate in $x \in \mathcal{O}$, use integration by parts,
to get
\begin{align}
  \int_{\cO}\psi(x)\eta(u^1(t,x))\,dx &= \int_{\cO}\psi(x) \eta(u_0(x))\,dx \nonumber \\
  &- \varepsilon \int_0^t\int_{\cO} \psi(x) \eta''(u^1(s,x)) |\nabla u^1(s,x)|^2 \,dx\,ds \nonumber \\
  &-\ve\int_0^t\int_{\cO}\eta'(u^1(s,x))\nabla\psi(x)\cdot\nabla u^1(s,x)\,dx\,ds \nonumber\\
  &+\ve \int_0^t\int_{\po\cO}\psi(y) \eta'(u^1(s,y)) \po_{\nu} u^1(s,y)\,d\H^{d-1}(y)\,ds\nonumber\\
  &+\int_0^t\int_{\cO}\psi(x) \eta''(u^1(s,x)) \nabla u^1(s,x)\cdot \Abf(v^1(s,x))\,dx\, ds  \nonumber \\
  &+\int_0^t\int_{\cO}\eta'(u^1(s,x))\nabla\psi(x)\cdot \Abf(v^1(s,x))\,dx\,ds\nonumber\\
  &- \int_0^t\int_{\po\cO}\psi(y) \eta'(u^1(s,y)) \Abf(v^1(s,y))\cdot\nu(y)\,d\H^{d-1}(y)\,ds\nonumber\\
  &+ \int_0^t\int_{\cO}\psi(x) \eta'(u^1(s,x)) \Phi(v^1(s,x))\,dx\,dW(s)  \nonumber \\
  &+ \frac{1}{2} \int_0^t\int_{\cO}\psi(x) \eta''(u^1(s,x)) G^2(x,v^1(s,x))\,dx\, ds. \label{equ3'}
\end{align}
Then we use \eqref{equ2}, from which the  fourth and seventh term on the right-hand side of \eqref{equ3'} cancel each other,  to finally obtain \eqref{Ito1}.

As for \eqref{Ito2}, it may be obtained by a totally similar argument.

In order to obtain both identities  for $v^1,v^2 \in \cE$, we approximate them in $L^2(\Om\X[0,T]; H^1(\cO))$ by sequences $v_k^1,v_k^2\in L^\infty(\Om;C_c^\infty((0,T)\X\bar \cO))$,  and pass to the limit in the identities \eqref{Ito1}, \eqref{Ito2}  for $v_k^1,v_k^2$ using the continuity property of $K(v)$ stated in Lemma~\ref{L:5.1}. Since $\varphi'' \in (C\cap L^\infty)(-\infty, \infty)$, all terms on the equations are preserved on the limit, so \eqref{Ito1} and \eqref{Ito2} are proven.

\end{proof}

Let us now define a new norm for $\cE$, equivalent to norm $\|\cdot\|_{\cE}$ defined  in \eqref{norm1}. We set
\begin{equation}\label{norm2}
\Vert u \Vert_{*\cE}^2 =   \sup_{0\leq t \leq T} e^{-C_* t/\alpha}\bbE \Bigg\{  \sup_{0\leq s \leq t} \|u(s)\|_{L^2(\mathcal{O})}^2 + \varepsilon \int_0^t \|\nabla u(s)\|_{L^2(\mathcal{O})}^2 \, ds  \Bigg\},
\end{equation}
where $C_*>0$ and $0<\alpha<1$ will  be  chosen later.

As a corollary of Proposition~\ref{P:5.3}, let us prove that $K(v)$ is a contraction on $\cE$ endowed with the $\|\cdot\|_{*\cE}$, with $C_*$ suitably chosen.

\begin{proposition} \label{P:5.4} If $\cE$ is endowed with the norm $\|\cdot\|_{*\cE}$, with $C_*$ suitably chosen, then  $K:\cE\to\cE$ is a contraction.
\end{proposition}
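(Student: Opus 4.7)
The plan is to apply the It\^o-type identity \eqref{Ito2} with the choices $\eta(r)=r^{2}/2$ (so $\eta''\equiv 1$) and $\psi\equiv 1$. Writing $u:=K(v^{1})-K(v^{2})$ and $w:=v^{1}-v^{2}$, the $\nabla\psi$ term vanishes, and what remains is
\begin{multline*}
\tfrac12\|u(t)\|_{L^{2}(\cO)}^{2}+\ve\int_{0}^{t}\|\nabla u(s)\|_{L^{2}(\cO)}^{2}\,ds
=\int_{0}^{t}\!\!\int_{\cO}\nabla u\cdot(\Abf(v^{1})-\Abf(v^{2}))\,dx\,ds\\
+\int_{0}^{t}\!\!\int_{\cO} u\,(\Phi(v^{1})-\Phi(v^{2}))\,dx\,dW(s)
+\tfrac12\int_{0}^{t}\!\!\int_{\cO}\sum_{k\ge1}|g_{k}(x,v^{1})-g_{k}(x,v^{2})|^{2}\,dx\,ds.
\end{multline*}
Since $\abf^{\ve}\in L^{\infty}$, $\Abf^{\ve}$ is globally Lipschitz with constant $L$, so by Cauchy--Schwarz and Young's inequality the first term on the right is bounded by $\tfrac{\ve}{2}\int_{0}^{t}\|\nabla u\|^{2}+\tfrac{L^{2}}{2\ve}\int_{0}^{t}\|w\|^{2}$, absorbing half of the dissipation. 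The last term is bounded by $\tfrac{D}{2}\int_{0}^{t}\|w\|^{2}\,ds$, using \eqref{e1.5}.

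Next I would take $\sup_{0\le s\le t}$ and expectation. For the stochastic integral, the Burkholder--Davis--Gundy inequality together with \eqref{e1.5} gives
\[
\bbE\sup_{0\le r\le t}\Bigl|\int_{0}^{r}\!\!\int_{\cO}u(\Phi(v^{1})-\Phi(v^{2}))\,dx\,dW\Bigr|
\le C\,\bbE\Bigl(\int_{0}^{t}\|u\|^{2}\cdot D\|w\|^{2}\,ds\Bigr)^{1/2},
\]
which, by the elementary inequality $ab\le \tfrac14 a^{2}+b^{2}$, is absorbed as $\tfrac14\bbE\sup_{0\le s\le t}\|u(s)\|^{2}+C\int_{0}^{t}\bbE\|w\|^{2}\,ds$. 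Combining these bounds yields an estimate of the form
\[
\bbE\Bigl\{\sup_{0\le s\le t}\|u(s)\|_{L^{2}}^{2}+\ve\int_{0}^{t}\|\nabla u\|_{L^{2}}^{2}\,ds\Bigr\}
\le C_{1}\int_{0}^{t}\bbE\sup_{0\le \s\le s}\|w(\s)\|_{L^{2}}^{2}\,ds,
\]
with $C_{1}=C_{1}(\ve,L,D)$ independent of $t$.

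The final step is the weighting argument. Multiply the above by $e^{-C_{*}t/\alpha}$ and use that $\bbE\sup_{0\le\s\le s}\|w(\s)\|^{2}\le e^{C_{*}s/\alpha}\|w\|_{*\cE}^{2}$ by definition of $\|\cdot\|_{*\cE}$; then
\[
e^{-C_{*}t/\alpha}\int_{0}^{t}\bbE\sup_{0\le\s\le s}\|w(\s)\|^{2}\,ds
\le \|w\|_{*\cE}^{2}\,e^{-C_{*}t/\alpha}\int_{0}^{t}e^{C_{*}s/\alpha}\,ds
\le \frac{\alpha}{C_{*}}\|w\|_{*\cE}^{2}.
\]
Taking $\sup_{0\le t\le T}$ on the left gives $\|K(v^{1})-K(v^{2})\|_{*\cE}^{2}\le (C_{1}\alpha/C_{*})\|v^{1}-v^{2}\|_{*\cE}^{2}$, so choosing $C_{*}>C_{1}\alpha$ makes $K$ a strict contraction. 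The main subtlety is the stochastic integral: one must keep the $\sup$ inside the expectation so that Banach's fixed point applies in the norm \eqref{norm2}, and this is precisely what BDG together with the absorption trick delivers; the boundary contribution, which could have been problematic, is handled automatically because Proposition~\ref{P:5.3} already incorporates the boundary condition $\ve\po_{\nu}K(v)=\Abf(v)\cdot\nu$ into the identity, and this is what makes the $\nabla\psi$ term vanish cleanly when $\psi\equiv1$.
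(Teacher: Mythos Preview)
Your proof is correct and follows essentially the same approach as the paper: apply \eqref{Ito2} with $\eta(r)=r^{2}/2$ and $\psi\equiv 1$, absorb the $\nabla u\cdot(\Abf(v^{1})-\Abf(v^{2}))$ term via Young's inequality, handle the stochastic integral by BDG plus the $\tfrac14\sup$-absorption trick, and finish with the exponential weighting to produce the contraction factor. The only cosmetic difference is that the paper names the constant in the energy estimate $C_{*}$ directly (so the contraction factor comes out as $\alpha^{1/2}$), whereas you call it $C_{1}$ and then choose $C_{*}>C_{1}\alpha$; these are equivalent.
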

\begin{proof}
Given $v^1$ and $v^2$ in $\cE$, let us apply \eqref{Ito2} with  $\psi(x) \equiv 1$ and $\eta(s) = s^2/2$ to obtain
\begin{align*}
 & \frac{1}2 \bbE  \sup_{0\le t\le T}\|K(v^1)(t) - K(v^2)(t)\|_{L^2(\mathcal{O})}^2 + \varepsilon \bbE \int_0^T \|\nabla K(v^1)(s) - \nabla K(v^2)(s)\|_{L^2(\mathcal{O})}^2\,ds \\
  &\leq \bbE \int_0^T \int_{\cO} |\nabla K(v^1)(s,x) - \nabla K(v^2)(s,x)| |\Abf(v^1(s,x)) - \Abf(v^2(s,x))|\, dx\,ds \\
  &+ \bbE \sup_{0\le t\le T}\left|\sum_{k\ge1} \int_0^t \int_{\cO} (K(v^1)(s,x) - K(v^2)(s,x)) ( g_k(v^1(s,x)) - g_k(v^2(s,x)))\, dx\,d\b_k(s)\right|\\
  &+ \frac{1}{2} \sum_{k\ge1} \bbE \int_0^T \int_{\cO}\big|g_k\big(x, v^1(s,x)\big) - g_k\big(x, v^2(s,x)\big) \big|^2 \,dx\,ds\\
  &\le \frac\ve2 \bbE \int_0^T \|\nabla K(v^1)(s) - \nabla K(v^2)(s)\|_{L^2(\mathcal{O})}^2\,ds+C\bbE\int_0^T\|v_1(s)-v_2(s)\|_{L^2(\cO)}^2\,ds\\
  &+C\bbE\left(\int_0^T\sum_{k\ge1} \left|\int_{\cO}(K(v^1)(s,x) - K(v^2)(s,x)) ( g_k(v^1(s,x)) - g_k(v^2(s,x)))\, dx\right|^2\,ds\right)^{1/2}
  \end{align*}
  where we have used the Burkholder-Davis-Gundy inequality (see, e.g., \cite{O}). So that the right-hand side of the above inequality may be estimated as
  \begin{align*}
   &\le \frac\ve2\bbE \int_0^T \|\nabla K(v^1)(s) - \nabla K(v^2)(s)\|_{L^2(\mathcal{O})}^2\,ds+C\bbE\int_0^T\|v_1(s)-v_2(s)\|_{L^2(\cO)}^2\,ds\\
  &+C\bbE\left(\int_0^T\|K(v^1)(s) - K(v^2)(s)\|_{L^2}^2 \sum_{k\ge 1}\| g_k(v^1(s)) - g_k(v^2(s)\|_{L^2}^2\,ds\right)^{1/2}\\
   &\le \frac\ve2 \bbE \int_0^T \|\nabla K(v^1)(s) - \nabla K(v^2)(s)\|_{L^2(\mathcal{O})}^2\,ds+C\bbE\int_0^T\|v_1(s)-v_2(s)\|_{L^2(\cO)}^2\,ds\\
  &+C\bbE\left(\sup_{0\le t\le T}\|K(v^1)(t) - K(v^2)(t)\|_{L^2}^2\right)^{1/2} \left(\int_0^T\| v^1(s) - v^2(s)\|_{L^2}^2\,ds\right)^{1/2}\\
    &\le \frac\ve2 \bbE \int_0^T \|\nabla K(v^1)(s) - \nabla K(v^2)(s)\|_{L^2(\mathcal{O})}^2\,ds+C\bbE\int_0^T\|v_1(s)-v_2(s)\|_{L^2(\cO)}^2\,ds\\
  &+\frac14\bbE\sup_{0\le t\le T}\|K(v^1)(t) - K(v^2)(t)\|_{L^2}^2 ,
\end{align*}
from which it follows
\begin{align}
  \mathbb{\bbE} \sup_{0\le t\le T} \|K(v^1)(t) &- K(v^2)(t)\|_{L^2(\mathcal{O})}^2 + \varepsilon \mathbb{E} \int_0^T \|\nabla K(v^1)(s) - \nabla K(v^2)(s)\|_{L^2(\mathcal{O})}^2 \nonumber \\
  &\leq C_* \mathbb{E} \int_0^T \|v^1(s) - v^2(s)\|_{L^2(\mathcal{O})}^2ds, \label{contract}
\end{align}
for some  constant $C_* > 0$, depending only on the data of the problem \eqref{e5.1}--\eqref{e5.3}. Now, take \eqref{contract} with $t$ instead of $T$, multiply both sides of it  by
$e^{-C_*t/\a}$, take the $\sup_{0\le t\le T}$ majorizing the resulting right-hand side by
\begin{multline*}
 C_*\sup_{0\le t\le T} e^{-C_*t/\a} \int_0^t e^{C_*s/\a} e^{-C_*s/\a } \mathbb{E}\sup_{0\le \tau\le s} \|v^1(\tau) - v^2(\tau)\|_{L^2(\mathcal{O})}^2\,ds
\\ \le  C_*\sup_{0\le t \le T} e^{-C_*t/\a} \|v^1-v^2\|_{*\cE}^2 \int_0^t e^{C_*s/\a}\,ds
 \le \a\|v^1-v^2\|_{*\cE}^2,
 \end{multline*}
 and then taking the $\sup_{0\le t\le T}$ on the resulting left-hand side we deduce that
 $$
 \| K(v^1)-K(v^2)\|_{*\cE}\le \a^{1/2}  \|v^1-v^2\|_{*\cE}.
 $$
 Since $0<\a<1$ we have the desired conclusion.
\end{proof}

\begin{proof}[Conclusion of the proof of Theorem~\ref{T:5.1}]  Proposition~\ref{P:5.4} implies the existence of a unique fixed point  $u^\ve$ for the operator $K:\cE\to\cE$. In particular,
from Proposition~\ref{P:5.2},  $u^\ve$ satisfies almost surely, for all $\varphi\in C^\infty(\bar \cO)$,
\begin{multline}\label{eP5.2.1'}
\int_{\cO}u^\ve(t)\varphi(x)\,dx+\ve\int_0^t\int_{\cO}\nabla u^\ve(s,x)\cdot\nabla\varphi(x)\,dx\,ds =\int_{\cO}u_0(x)\varphi(x)\,dx \\+\int_0^t\int_{\cO} \Abf(u^\ve(s))\cdot\nabla\varphi(x)\,dx\,ds+
\int_0^t\int_{\cO}\varphi(x)\Phi(u^\ve(s))\,dx\, dW(s).
\end{multline}
This means that  $u^\ve$ is a solution to the initial-boundary value problem \eqref{e5.1}--\eqref{e5.3}.
Now, from \eqref{eP5.2.1'}, it is easy to deduce that, for $\phi\in C^\infty([0,T]\X\bar \cO)$, we  have
\begin{multline}\label{eP5.2.1''}
\int_{\cO}u^\ve(t)\phi(t,x)\,dx- \int_0^t\int_{\cO} u^\ve(s,x)\phi_s(s,x)\,dx\,ds \\+ \ve \int_0^t\int_{\cO}\nabla u^\ve(s,x)\cdot\nabla\phi(s,x)\,dx\,ds
\\=\int_{\cO}u_0(x)\phi(0,x)\,dx +\int_0^t\int_{\cO} \Abf(u^\ve(s))\cdot\nabla\phi(s,x)\,dx\,ds\\+
\int_0^t\int_{\cO}\phi(s,x)\Phi(u^\ve(s))\,dx\, dW(s),
\end{multline}
which is another equivalent way to formulate the fact that $u(t,x)$ is a solution of \eqref{e5.1}--\eqref{e5.3}.

Now,  suppose $\bar u\in \cE$ is another solution of \eqref{e5.1}--\eqref{e5.3}, that is, if \eqref{eP5.2.1''} is satisfied with $\bar u$ instead of $u^\e$.  Then,  for a given  $t\in(0,T]$,  we  take in \eqref{eP5.2.1''}, with $\bar u$ instead of $u^\ve$,
$\phi(s,x)=S(t-s)\varphi(x)$, with $\varphi\in C_c^\infty(\cO)$,   and use the symmetry of $S(t-s)$ as an operator on $L^2(\cO)$, to get
\begin{multline}\label{eP5.2.1'''}
\int_{\cO} \bar u(t)\varphi(x)\,dx- \int_0^t\int_{\cO} \bar u(s,x) \po_sS(t-s)\varphi(x)\,dx\,ds \\- \ve \int_0^t\int_{\cO} \bar u(s,x)\cdot\Delta S(t-s)\varphi(x)\,dx\,ds
\\=\int_{\cO}S(t)u_0(x)\varphi(x)\,dx -\int_0^t\int_{\cO} S(t-s)\nabla\cdot \Abf(\bar u(s)) \varphi(x)\,dx\,ds\\
+\int_{\po\cO}S(t-s)\varphi(y)  \Abf(\bar u(s,y))\cdot\nu(y)\,d\H^{d-1}(y)+ \int_0^t\int_{\cO} S(t-s) \Phi(\bar u(s)) \varphi(x)\,dx\, dW(s).
\end{multline}
Thus, using the fact that $\po_t S(t-s)\varphi=\ve \Delta S(t-s)\varphi$,  we deduce
\begin{multline}\label{eP5.2.1iv}
\int_{\cO} \bar u(t)\varphi(x)\,dx =\int_{\cO}S(t)u_0(x)\varphi(x)\,dx\\ -\int_0^t\int_{\cO} S(t-s)\nabla\cdot \Abf(\bar u(s)) \varphi(x)\,dx\,ds
\\+\int_{\po\cO}S(t-s)\varphi(y)  \Abf(\bar u(s,y))\cdot\nu(y)\,dS(y)+ \int_0^t\int_{\cO} S(t-s) \Phi(\bar u(s)) \varphi(x)\,dx\, dW(s).
\end{multline}
Now, from \eqref{ewv.3}, similarly, we deduce that  $w^{\bar u}$  satisfies
\begin{multline}\label{eP5.2.1v}
\int_{\cO}w^{\bar u}(t)\phi(t,x)\,dx- \int_0^t\int_{\cO} w^{\bar u}(s,x)\phi_s(s,x)\,dx\,ds \\+ \ve \int_0^t\int_{\cO}\nabla w^{\bar u}(s,x)\cdot\nabla\phi(s,x)\,dx\,ds
\\=\int_0^t\int_{\po\cO} \phi(s,y) \Abf(\bar u(s,y))\cdot\nu(y)\,d\H^{d-1}(y)\,ds
\end{multline}
Again taking $\phi(s,x)=S(t-s)\varphi(x)$ and using that $\po_t S(t-s)\varphi=\ve \Delta S(t-s)\varphi$ we get
\begin{equation}\label{eP5.2.1vi}
\int_{\cO}w^{\bar u}(t)\varphi(x)\,dx =\int_0^t\int_{\po\cO} S(t-s)\varphi(y) \Abf(\bar u(s,y))\cdot\nu(y)\,d\H^{d-1}(y)\,ds.
\end{equation}
Then, using \eqref{eP5.2.1vi} into \eqref{eP5.2.1iv}
\begin{multline}\label{eP5.2.1vii}
\int_{\cO} \bar u(t)\varphi(x)\,dx =\int_{\cO}S(t)u_0(x)\varphi(x)\,dx \\
-\int_0^t\int_{\cO} S(t-s)\nabla\cdot \Abf(\bar u(s)) \varphi(x)\,dx\,ds\\
+ \int_0^t\int_{\cO} S(t-s) \Phi(\bar u(s)) \varphi(x)\,dx\, dW(s) +\int_{\cO}w^{\bar u}(t)\varphi(x)\,dx .
\end{multline}
Since $\varphi\in C_c^\infty(\cO)$ is arbitrary we conclude that $\bar u$ satisfies $K(\bar u)=\bar u$, that is, $\bar u$ is also a fixed point of $K$ and so, by the uniqueness of the fixed point in Banach's theorem, we have $\bar u= u^\ve$.

Finally, regarding the energy estimate \eqref{Energy}, from identity \eqref{Ito1} of Proposition~\ref{P:5.3} applied to the fixed point $u^\ve$ with $\eta(s)=s^2/2$ and proceeding as in the proof of Proposition~\ref{P:5.4} we have that
\begin{align*}
 & \frac{1}2 \bbE  \sup_{0\le s\le t}\|u^\ve(t)\|_{L^2(\mathcal{O})}^2 + \varepsilon \bbE \int_0^t \|\nabla u^\ve(s)\|_{L^2(\mathcal{O})}^2\,ds \\
 &\le \frac12\|u_0\|_{L^2(\cO)}^2+ \bbE \left|\int_0^t \int_\Omega \Abf(u^\ve(s,x))\cdot\nabla u^\ve(s,x) dx\, ds\right| \\
 &\qquad\qquad+\frac14\bbE\sup_{0\le s\le t}\|u^\ve(t)\|_{L^2(\cO)}^2 +C\bbE\int_0^t\|u^\ve(s)\|_{L^2(\cO)}^2\,ds\\
 &\leq \frac12\|u_0\|_{L^2(\cO)}^2+ C \max_{\l\in\R} |\tilde{\Abf}(\l)|  \\
 &\qquad\qquad+\frac14\bbE\sup_{0\le s\le t}\|u^\ve(s)\|_{L^2(\cO)}^2 + C\int_0^t \bbE\sup_{0\le r\le s}\|u^\ve(r)\|_{L^2(\cO)}^2\,ds,\\
\end{align*}
where $\tilde{\Abf}'(u)=\Abf(u)$. We recall that by our assumptions on the approximate flux function $\tilde\Abf$ is a bounded function. Then, using Gronwall's inequality, we obtain \eqref{Energy}.
Let us point out that the uniform boundedness of $\tilde{\Abf}$ is not essential as $u^\ve$ satisfies a maximum principle (see Theorem \ref{T:4.3'} below).
\end{proof}

For future reference we state here the following direct consequence of Proposition~\ref{P:5.3}.

\begin{lemma}[Entropy identity]\label{L:5.4}  Let $u^\ve\in\cE$ be the solution of \eqref{e5.1}--\eqref{e5.3}. For all $\eta\in C^2(\R)$,  for all $ \psi\in C_c^\infty(\cO)$, for all $0\le s\le t\le T$,
\begin{multline}\label{eL5.4}
\la \eta(u^\ve(t)),\psi\ra-\la \eta(u(s)),\psi\ra= -\ve\int_s^t\la \eta''(u^\ve(r))|\nabla u^\ve(r)|^2,\psi\ra+\int_s^t\la q(u^\ve(r)),\nabla\psi\ra dr\\
-\ve\int_s^t \la \nabla\eta(u^\ve(r)),\nabla \psi\ra\, dr +\sum_{k\ge 1}\int_s^t\la g_k^\ve(\cdot,u^\ve(r))\eta'(u^\ve(r)), \psi \ra\,d\b_k(r)\\
+\frac12\int_s^t \la {G^\ve}^2(\cdot,u^\ve(r))\eta''(u^\ve),\psi\ra\,dr,
\end{multline}
a.s., where $q(u)=\int_0^u a(\xi) \eta' (\xi)\,d\xi$.  Moreover, $u$ for $\phi\in C_c^\infty([0,T)\X\R^d)$, we  have
\begin{multline}\label{eL5.4.2}
 \int_0^T\int_{\cO} u^\ve(t,x)\phi_t(t,x)\,dx\,dt + \int_0^T\int_{\cO} \Abf(u^\ve(t))\cdot\nabla\phi(t,x)\,dx\,dt\\
+\int_0^T\int_{\cO}\phi(t,x)\Phi(u^\ve(t))\,dx\, dW(t) = \ve \int_0^T\int_{\cO}\nabla u^\ve(t,x)\cdot\nabla\phi(t,x)\,dx\,dt.
\end{multline}
\end{lemma}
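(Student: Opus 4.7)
The plan is to derive both identities as direct consequences of Proposition~\ref{P:5.3} combined with the integral representation \eqref{eP5.2.1''} already established for $u^\ve$ in the proof of Theorem~\ref{T:5.1}.

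For \eqref{eL5.4}, I would apply the It\^o identity \eqref{Ito1} to $v^{1}=u^\ve$, which is legitimate because $u^\ve$ is the fixed point of $K$ and lies in $\cE$. This produces an identity on $[0,t]$; writing it at times $t$ and $s$ and subtracting eliminates the contribution from the initial data $\eta(u_0)$. The main algebraic task is to reorganize the two $\Abf$-terms appearing in \eqref{Ito1}: the interior contribution $\int_{\cO}\eta''(u^\ve)\nabla u^\ve\cdot\Abf(u^\ve)\psi\,dx$ together with the gradient-of-test-function contribution $+\int_{\cO}\eta'(u^\ve)\Abf(u^\ve)\cdot\nabla\psi\,dx$ (extracted from $-\int \eta'(u^\ve)(\ve\nabla u^\ve-\Abf(u^\ve))\cdot\nabla\psi\,dx$). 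Setting $H(u):=\int_0^u\eta''(\xi)\Abf(\xi)\,d\xi$, one has $\nabla\cdot H(u^\ve)=\eta''(u^\ve)\Abf(u^\ve)\cdot\nabla u^\ve$, so integration by parts (no boundary term, since $\psi\in C_c^\infty(\cO)$) converts the first term into $-\int_{\cO}H(u^\ve)\cdot\nabla\psi\,dx$. Differentiating shows $\frac{d}{du}[\eta'(u)\Abf(u)-H(u)]=\eta'(u)\Abf'(u)=q'(u)$, hence $\eta'(u)\Abf(u)-H(u)=q(u)+c$ for the constant vector $c=\eta'(0)\Abf(0)$; because $\int_{\cO}\nabla\psi\,dx=0$ the constant drops out and the two contributions collapse to $\int_{\cO}q(u^\ve)\cdot\nabla\psi\,dx$, matching \eqref{eL5.4} exactly. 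The remaining viscous term rewrites as $-\ve\int\eta'(u^\ve)\nabla u^\ve\cdot\nabla\psi=-\ve\int\nabla\eta(u^\ve)\cdot\nabla\psi$, and the stochastic and quadratic-variation terms appear verbatim.

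For \eqref{eL5.4.2}, I would simply take \eqref{eP5.2.1''} at $t=T$ against a test function $\phi\in C_c^\infty([0,T)\X\R^d)$ chosen so that $\phi(T,\cdot)\equiv 0$, which kills the $t=T$ boundary contribution on the left-hand side. Rearranging the remaining terms produces \eqref{eL5.4.2} immediately.

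The main obstacle I anticipate is reconciling the hypotheses on $\eta$: Proposition~\ref{P:5.3} requires $\eta''\in L^\infty$, whereas Lemma~\ref{L:5.4} asks only $\eta\in C^2(\R)$. I would handle this by invoking the $\omega$-uniform bound on $u^\ve$ provided by a parabolic maximum principle (the parabolic analog of Theorem~\ref{T:4.3}, valid once $\Abf^\ve$ and $g_k^\ve$ have been arranged to preserve $[a,b]$), truncating $\eta$ outside a large interval containing the essential range of $u^\ve$, applying the It\^o identity to the truncated entropy (whose second derivative is bounded), and passing to the limit. All remaining manipulations---integration by parts, stochastic Fubini, and use of the continuity of $K$ from Lemma~\ref{L:5.1}---are routine.
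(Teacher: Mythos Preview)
Your proposal is correct and follows essentially the same route as the paper: the paper's proof reads simply that \eqref{eL5.4} ``follows immediately from \eqref{Ito1}, using integration by parts, since $u^\ve$ is the fixed point of $K$,'' and that \eqref{eL5.4.2} ``follows from \eqref{eP5.2.1''} by taking a test function in $\phi\in C_c^\infty([0,T)\X\R^d)$ and evaluating it at $t=T$''---you have spelled out exactly that integration by parts and evaluation. Your handling of the discrepancy between $\eta\in C^2$ and the hypothesis $\eta''\in L^\infty$ of Proposition~\ref{P:5.3} via truncation and the parabolic maximum principle (Theorem~\ref{T:4.3'}) is a legitimate detail that the paper leaves implicit.
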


\begin{proof} Relation \eqref{eL5.4} follows immediately from \eqref{Ito1}, using integration by parts, since $u^\ve$ is the fixed point of $K$.
As to relation \eqref{eL5.4.2}, it follows from \eqref{eP5.2.1''} by taking a test function in $\phi\in C_c^\infty([0,T)\X\R^d)$ and evaluating it at $t=T$.

\end{proof}

We close this section with the following maximum principle for the parabolic approximation.

 \begin{theorem}[Maximum Principle for the parabolic approximation] \label{T:4.3'} Let $u^\ve$ be the solution of \eqref{e5.1}--\eqref{e5.3}. Then, a.s., $a\le u^\ve (t,x)\le b$, a.e.\ $(t,x)\in (0,T)\X\cO$.
 \end{theorem}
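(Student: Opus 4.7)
The plan is to apply the It\^o-type formula \eqref{Ito1} from Proposition~\ref{P:5.3} to the fixed point $u^\ve$ itself (so $K(v^1)=v^1=u^\ve$), with test function $\psi \equiv 1$ and a carefully chosen $C^2$ convex approximation $\eta_\delta$ of $(u-b)_+$. With $\psi \equiv 1$ the $\nabla\psi$--term on the right of \eqref{Ito1} drops; moreover, as explained in the proof of Proposition~\ref{P:5.3}, the Neumann condition $\ve\po_\nu u^\ve = \Abf^\ve(u^\ve)\cdot\nu$ has already been used to cancel all boundary contributions, so \eqref{Ito1} is free of explicit boundary integrals.

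I would fix a non-negative mollifier $\rho \in C_c^\infty(\R)$ supported in $[-1,1]$ with $\int\rho = 1$, set $\rho_\delta(x) = \delta^{-1}\rho(x/\delta)$, and define
\[
\eta_\delta(u) := \int_{-\infty}^u \int_{-\infty}^\xi \rho_\delta(\zeta - b - 2\delta)\,d\zeta\,d\xi.
\]
Then $\eta_\delta$ is convex and non-negative, $\eta_\delta''$ is bounded and supported in $[b+\delta, b+3\delta]$, $\eta_\delta \equiv 0$ on $(-\infty, b+\delta]$, $0 \le \eta_\delta(u) \le (u-b)_+$ for all $u$, and $\eta_\delta(u) \to (u-b)_+$ pointwise as $\delta \to 0$. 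Plugging $\eta_\delta$ into \eqref{Ito1} and taking expectation, I would analyse the right-hand side term by term: the initial-data contribution vanishes since $u_0^\ve \le b < b+\delta$; the stochastic integral disappears under expectation; the viscosity term $-\ve\,\bbE\int_0^t\!\int_\cO \eta_\delta''(u^\ve)|\nabla u^\ve|^2\,dx\,ds$ has the correct (non-positive) sign; and the It\^o correction $\tfrac12\bbE\int_0^t\!\int_\cO \eta_\delta''(u^\ve)(G^\ve)^2(x,u^\ve)\,dx\,ds$ vanishes identically, because on $\{u^\ve \in [b+\delta, b+3\delta]\}$ one has $u^\ve > b \ge M$, so that $g_k^\ve(x, u^\ve) = 0$ by the compact support of $g_k^\ve$ in $(-M,M)$.

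The only remaining term is the flux contribution, which I would rewrite as a boundary integral via the primitive $F_\delta(u) := \int_b^u \eta_\delta''(\xi)\Abf^\ve(\xi)\,d\xi$: since $\eta_\delta''(u^\ve)\nabla u^\ve \cdot \Abf^\ve(u^\ve) = \nabla\cdot F_\delta(u^\ve)$, the divergence theorem turns it into $\bbE\int_0^t\!\int_{\po\cO} F_\delta(u^\ve)\cdot\nu\,d\mathcal{H}^{d-1}\,ds$. Controlling this residual boundary term is the main obstacle of the proof, and it is resolved by the crucial vanishing of $\Abf^\ve$ at $b$: the pointwise bound $|F_\delta(u)| \le \max_{\xi \in [b+\delta, b+3\delta]}|\Abf^\ve(\xi)|$, together with continuity and $\Abf^\ve(b) = \Abf(b) = 0$, shows this quantity tends to $0$ as $\delta \to 0$. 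Letting $\delta \to 0$ and applying dominated convergence on the left (using the envelope $\eta_\delta(u^\ve) \le (u^\ve - b)_+$, integrable by the energy estimate \eqref{Energy}) yields $\bbE\int_\cO (u^\ve(t) - b)_+\,dx = 0$, i.e., $u^\ve \le b$ a.s., a.e. The bound $u^\ve \ge a$ follows by the symmetric argument using $\tilde\eta_\delta(u) := \eta_\delta(a+b-u)$, exploiting $\Abf^\ve(a) = 0$ and the vanishing of $g_k^\ve$ for $\xi \le -M \le a$.
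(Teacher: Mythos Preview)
Your proof is correct, but it takes a genuinely different route from the paper's in two respects. First, the paper works with the quadratic entropy $\eta(u)=\tfrac12[u-b]_{\delta,+}^2$ rather than a smoothing of $(u-b)_+$ itself; after passing to the limit $\delta\to0$ this yields an identity for $\tfrac12\int_\cO[u^\ve-b]_+^2\,dx$. Second, and more importantly, the flux term is handled differently: the paper keeps it as a volume integral and uses Young's inequality $\mathbf{1}_{u^\ve>b}\,\nabla u^\ve\cdot\Abf(u^\ve)\le\tfrac{\ve}{2}\mathbf{1}_{u^\ve>b}|\nabla u^\ve|^2+C_\ve|\Abf(u^\ve)|^2$, then absorbs the first piece into the (good) viscosity term and bounds the second by $C_\ve\mathrm{Lip}(\Abf)^2[u^\ve-b]_+^2$ via $\Abf(b)=0$, finishing with Gronwall. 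Your approach instead rewrites the flux term as a divergence and sends it to the boundary, where the smallness of $\Abf^\ve$ near $b$ kills it directly as $\delta\to0$, so no Gronwall is needed. Both arguments rest on the same structural ingredients ($\Abf(b)=0$ and the vanishing of $g_k^\ve$ outside $(-M,M)$), but yours is slightly more elementary in that it avoids the absorption/Gronwall step, at the modest cost of invoking the trace of $F_\delta(u^\ve)$ on $\partial\cO$ (which is legitimate since $u^\ve(t)\in H^1(\cO)$ and $F_\delta$ is Lipschitz). A minor remark: your appeal to expectation to kill the stochastic integral is in fact unnecessary, since on $\{\eta_\delta'(u^\ve)\ne0\}\subset\{u^\ve>b\ge M\}$ one already has $g_k^\ve(\cdot,u^\ve)=0$, so that integrand vanishes identically a.s.; the paper exploits this same observation.
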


\begin{proof}
We take   in \eqref{Ito1}  $\psi\equiv 1$ and $\eta(u^\ve)=\frac{1}{2}[u-b]_{\d,+}^2$,  where $[u-b]_{\d,+}$ is a  $C^2$ convex approximation of $[u-b]_+$ , the latter being the positive part of
$u-b$, such that $u\mapsto [u-b]_{\d,+}'$ is monotone nondecreasing, $[u-b]_{\d,+}'=1$, for $u>b+\d$, and $[u-b]_{\d,+}'=0$, for $u\le b$. Then, after sending $\d\to0$,  we obtain a.s.
\begin{align}
    \frac{1}{2}\int_{\mathcal{O}} &[u^\ve(t,x)-b]_+^2\, dx = \nonumber\\
    &-\varepsilon \int_0^t\int_{\mathcal{O}} {\bf1}_{u^\ve(s,x)>b} |\nabla u^\ve(s,x)|^2 \, dx\, ds  \nonumber\\
    &+\int_0^t \int_\mathcal{O} {\bf1}_{u^\ve(s,x)>b} \nabla u^\ve(s,x)\cdot \Abf(u^\ve(s,x)) dx ds \nonumber\\\
    &+  \int_0^t \int_\mathcal{O} [u(s,x)-b]_+\Phi(u^\ve(s)) \,dx\,dW(s)\nonumber\\
    &+ \frac{1}{2}  \int_0^t \int_\mathcal{O} {\bf1}_{u^\ve(s,x)>b}G^2(x,u^\ve(s,x))\, dx \, ds. \label{Ito1_+}
\end{align}

Now, by virtue of \eqref{e1.6}, Young's inequality with $\ve$ yields
\begin{multline*}
\int_0^t \int_\mathcal{O} {\bf1}_{u^\ve(s,x)>b} \nabla u^\ve(s,x)\cdot \Abf(u^\ve(s,x))\, dx\, ds \\
\leq \frac{\ve}{2}\int_0^t\int_{\mathcal{O}} {\bf1}_{u^\ve(s,x)>b} |\nabla u^\ve(s,x)|^2 \, dx\, ds + C_\ve \Lip(\Abf) \int_0^t\int_{\mathcal{O}} [u^\ve(t,x)-b]_+^2\, dx\, ds.
\end{multline*}

On the other hand, by assumption, $g_k(\cdot,\xi)=0$ for any $\xi>b$ so that the last two integrals on the right hand side of \eqref{Ito1_+} are equal to zero.

Thus, by Gronwall's inequality we conclude that  a.s.
\[
\frac{1}{2}\int_{\mathcal{O}} [u^\ve(t,x)-b]_+^2\, dx = 0,
\]
for a.e. $t\in(0,T)$.

Similarly, if $[u-a]_-$ denotes the negative part of $u-a$, in the same way we can also prove that a.s.
\[
\frac{1}{2}\int_{\mathcal{O}} [u^\ve(t,x)-a]_-^2\, dx = 0,
\]
for a.e. $t\in(0,T)$, which implies the result.
\end{proof}

\section{Existence: The vanishing viscosity limit}\label{S:5}

In this section we prove the convergence of the parabolic approximation \eqref{e5.1}--\eqref{e5.3} when $\ve\to0$ to the unique solution of \eqref{e1.1}--\eqref{e1.3}.

  \subsection{Kinetic formulation  for the parabolic approximation}\label{SS:5.1}

  The following proposition is essentially established in \cite{DV} in the periodic case and it can be proved as stated here in exactly the same way.

  \begin{proposition}\label{P:5.2.1} Let $u_0^\ve\in C_c^\infty(\cO)$ and let $u^\ve$ be the solution of \eqref{e5.1}--\eqref{e5.3}. Then $f^\ve={\bf1}_{u^\ve>\xi}$ satisfies: for all  $\varphi\in C_c^1(\cO\X [0,T)\X \R)$,
  \begin{multline}\label{eP521i}
  \int_0^T \la f^\ve(t),\po_t\varphi(t)\ra\,dt + \la f_0,\varphi(0)\ra+\int_0^T\la f^\ve(t), \abf(\xi)\cdot\nabla \varphi(t)-\ve \Delta\varphi(t)\ra\, dt\\
  =-\sum_{k\ge1}\int_0^T\int_{\cO}\int_{\R}g_k^\ve(x,\xi)\varphi(t,x,\xi)\,d\nu_{t,x}^\ve(\xi)\,dx\,d\b_k(t)\\
 -\frac12\int_0^T\int_{\cO}\int_{\R} \po_\xi\varphi(t,x,\xi) G_\ve^2(x,\xi)\,d\nu_{t,x}(\xi)^\ve\,dx\,dt+ m^\ve(\po_\xi\varphi),
 \end{multline}
 a.s., where $f_0(\xi)={\bf1}_{u_0>\xi}$, $\nu_{t,x}^\ve=\d_{u(t,x)=\xi}$, and, for $\phi\in C_b(\bar \cO\X[0,T]\X\R)$,
 \begin{equation}\label{e5.3.0}
 m^\ve(\phi)= \int_{\bar\cO\X[0,T]} \phi(t,x,u^\ve(t,x))\,\ve |\nabla u^\ve|^2\,dx\,dt,
 \end{equation}
 so $m^\ve=\ve |\nabla u^\ve|^2\d_{u^\ve=\xi}$.
 \end{proposition}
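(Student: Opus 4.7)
The strategy is to deduce \eqref{eP521i} from the entropy identity of Lemma~\ref{L:5.4} by choosing entropies that encode the auxiliary kinetic variable $\xi$, and then to extend the resulting identity from tensor-product test functions to arbitrary $\varphi \in C_c^1(\cO\X[0,T)\X\R)$ by density. Since $\varphi$ has compact support in $\cO$, no boundary contribution arises in the spatial integrations by parts, and the Neumann condition \eqref{e5.3} enters only implicitly through Lemma~\ref{L:5.4} (whose derivation used \eqref{e5.3} to cancel the boundary integral in the It\^o expansion of $\eta(u^\ve)$). Apart from this observation, the argument follows the periodic-case proof in \cite{DV} almost verbatim, which is why the authors refer the reader to that work.

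Concretely, for $\theta \in C_c^1([0,T))$, $\psi \in C_c^\infty(\cO)$ and $S \in C_c^1(\R)$, I put $\varphi(t,x,\xi) := \theta(t)\psi(x)S(\xi)$ and apply Lemma~\ref{L:5.4} to the entropy $\eta_S(u) := \int_{-\infty}^u S(\z)\,d\z$, so that $\eta_S'=S$, $\eta_S''=S'$ and $q_S(u) = \int_0^u \abf(\z) S(\z)\,d\z$. The kinetic identifications then read
\begin{align*}
\la \eta_S(u^\ve(t)),\psi\ra &= \la f^\ve(t), \psi S\ra, \\
\la q_S(u^\ve(t)),\nabla\psi\ra &= \la f^\ve(t), \abf(\xi)\cdot\nabla\psi\, S(\xi)\ra,
\end{align*}
the latter because the additive constant $\int_{-\infty}^0 \abf(\z)S(\z)\,d\z$ is annihilated by $\nabla\psi$ (as $\psi$ is compactly supported in $\cO$); meanwhile, a single integration by parts turns the cross-term $-\ve\la\nabla\eta_S(u^\ve),\nabla\psi\ra$ into $\ve\la f^\ve, S\Delta\psi\ra$, again with no boundary contribution since $\psi\in C_c^\infty(\cO)$. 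The decisive identification is of the parabolic dissipation with the kinetic measure, namely
$$
\ve\int_0^T \theta(t) \la \eta_S''(u^\ve)|\nabla u^\ve|^2, \psi\ra\,dt = \int_{[0,T]\X\cO} \po_\xi\varphi(t,x,u^\ve)\,\ve|\nabla u^\ve|^2\, dx\,dt = m^\ve(\po_\xi\varphi),
$$
which is precisely \eqref{e5.3.0} applied to $\phi = \po_\xi\varphi$. Multiplying Lemma~\ref{L:5.4} by $\theta(t)$ and integrating by parts in $t$ using $\theta(T)=0$ then produces both the pairing $\int_0^T\la f^\ve,\po_t\varphi\ra\,dt$ and the initial datum $\la f_0,\varphi(0)\ra = \theta(0)\la\eta_S(u_0),\psi\ra$, while the It\^o correction $\tfrac{1}{2}\la G_\ve^2\eta_S''(u^\ve),\psi\ra$ and the Brownian term $\sum_k\la g_k^\ve\eta_S'(u^\ve),\psi\ra$ translate directly, via $\nu^\ve_{t,x} = \d_{u^\ve(t,x)=\xi}$, into the corresponding summands on the right-hand side of \eqref{eP521i}.

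Once \eqref{eP521i} is verified for every tensor product $\varphi = \theta\psi S$, linearity extends it to the span of such functions, and a standard density argument (in $C^2$ norm on a fixed compact subset of $\cO\X[0,T)\X\R$) extends it to arbitrary $\varphi\in C_c^1(\cO\X[0,T)\X\R)$; the bounds on $u^\ve$ and $\nabla u^\ve \in L^2(\Om\X(0,T)\X\cO)$ from Theorem~\ref{T:5.1}, the finiteness of $\bbE\, m^\ve([0,T]\X\bar\cO\X\R)$, together with It\^o isometry for the stochastic integral, provide uniform control allowing passage to the limit in each summand. The only mildly delicate point --- and essentially the sole place where something non-routine happens --- is the identification of the parabolic dissipation with $m^\ve(\po_\xi\varphi)$; all the other correspondences are a direct transcription of Lemma~\ref{L:5.4} into kinetic variables.
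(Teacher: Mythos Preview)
Your proposal is correct and is precisely the approach the paper intends: the authors do not give a proof but refer to \cite{DV}, and the argument there is exactly the one you outline---apply the entropy identity (here Lemma~\ref{L:5.4}) with entropies $\eta_S(u)=\int_{-\infty}^{u}S(\zeta)\,d\zeta$, $S\in C_c^1(\R)$, translate each term via $f^\ve=\mathbf{1}_{u^\ve>\xi}$ and $\nu^\ve_{t,x}=\delta_{u^\ve(t,x)=\xi}$, identify the parabolic dissipation with $m^\ve(\partial_\xi\varphi)$, and pass from tensor products to general $\varphi$ by density. Your remark that the Neumann condition enters only through Lemma~\ref{L:5.4} (so that compact support of $\varphi$ in $\cO$ kills all boundary integrals at this stage) is the one adaptation needed over the periodic case.
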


 %Once we have the kinetic formulation for the solution of \eqref{e5.1}--\eqref{e5.3} in Proposition~\ref{P:5.2.1} we can obtain the analogues of Proposition~\ref{P:4.1}  and Theorems~\ref{T:4.1}--\ref{T:4.3}, which are proven following the same lines as the proofs of these results. In particular, just for the record, we have the following maximum principle.

 \subsection{Local uniform space regularity of the parabolic approximation} \label{SS:5.2}

 By Proposition~\ref{P:5.2.1} we have that $\chi^\ve:={\bf1}_{u>\xi}-{\bf 1}_{0>\xi}$ is a weak solution to the parabolic stochastic equation
 \begin{equation}\label{e5.3.1}
 \po_t\chi^\ve+\abf(\xi)\cdot\nabla \chi^\ve-\ve\Delta \chi^\ve=\po_{\xi}q-\sum_{k=1}^\infty (\po_\xi\chi)g_k\dot{\b_k} +\sum_{k=1}^\infty \d_0 g_k\dot{\b_k},
 \end{equation}
  where $q=m^\ve-\frac12 G^2\d_{u=\xi}$ and $m^\ve$ is given by \eqref{e5.3.0}.

 Next, we state a local version of the corollary~3.3 in \cite{GH}. For that we first fix some $\vartheta\in C_0^\infty(\mathbb{R})$ nonnegative such that $\vartheta(\xi)=1$ for $a\leq \xi\leq b$, and note that by  the maximum principle from theorem~\ref{T:4.3'}, we have, in the sense of distributions, that
\[
\nabla\chi^\ve = \nabla(\vartheta(\xi)\chi^\ve) = \vartheta(\xi)\delta_{u^\ve=\xi}\nabla u^\ve=-\partial_\xi[{\bf1}_{u^\ve>\xi}\nabla u^\ve \vartheta(\xi)]+\vartheta'(\xi){\bf1}_{u^\ve>\xi}\nabla u^\ve.
\]
 
Then,  multiplying \eqref{e5.3.1} by $\varphi\in C_c^\infty([0,T)\times\cO)$ we get
 \begin{multline}\label{e5.3.2}
   \po_t (\varphi\chi^\ve)+\abf(\xi)\cdot\nabla (\varphi\chi^\ve)-\ve\Delta (\varphi\chi^\ve)\\
   =\po_{\xi}q^\varphi-\sum_{k=1}^\infty  (\po_\xi (\varphi\chi^\ve))g_k\dot{\b_k} +\sum_{k=1}^\infty \varphi \d_0 g_k\dot{\b_k} \\+ \chi^\ve(\po_t\varphi + a^\ve(\xi)\cdot\nabla\varphi -\ve\Delta\varphi)-2\ve\vartheta'(\xi){\bf1}_{u^\ve>\xi}\nabla u^\ve\cdot\nabla\varphi,
 \end{multline}
  where $q^\varphi=\varphi m^\ve- \varphi\frac12 G^2\d_{u=\xi} +2\ve\nabla\varphi\cdot \nabla u^\ve \vartheta(\xi){\bf1}_{u^\ve>\xi}$.  We observe that $q^\varphi$ is also a.s.\  a finite measure on $[0,T]\X\cO\X\R$ with total variation uniformly bounded with respect to $\ve$, by the energy estimate \eqref{Energy} and the maximum principle from Theorem \ref{T:4.3'}.

  We first remark that the condition \eqref{e1.7} implies the non-degeneracy condition of \cite{GH}, namely, for some $\a\in(0,1)$,
 \begin{align}
& \om_{\cL}(J,\d) \lesssim \left(\frac{\d}{J^\b}\right)^\a\qquad \forall \d>0,\, \forall J\gtrsim 1, \label{e1.7'} \\
&\sup_{\tiny{\begin{matrix}\tau\in\R,n\in\Z^d\\|n|\sim J \end{matrix}}}\sup_{\xi\in (-L_0,L_0)} |\cL_\xi (i\tau,in;\xi)| \lesssim J^\b, \label{e1.7''}
 \end{align}
 with $\b=1$, where,
 $$
 \om_{\LL}(J;\d):=\sup_{\tiny{\begin{matrix} \tau\in\R, n\in\Z^d\\ |n|\sim J\end{matrix}}} |\Om_{\LL}(\tau,n,\d)|,
$$
and $\cL_\xi(i\tau,i n;\xi)=\po_\xi\cL(i\tau,i n; \xi)$. Indeed, we note that if $|n|\sim J$ then
$$
\Om_\cL(\tau,n,\d)\subset \Om_\cL(\frac{\tau}{|n|},\frac{n}{|n|},C\frac{\delta}{J}),
$$
for some $C>0$. Therefore, from \eqref{e1.7} we conclude that \eqref{e1.7'} is satisfied with $\beta=1$. Moreover, as $\cL_\xi(i\tau,i n;\xi)=i a'(\xi)\cdot n$ we see that \eqref{e1.7''} is satisfied trivially with $\beta=1$ as well.

  Concerning the symbol of the kinetic parabolic approximation
  $$
  \cL^\ve(i\tau,in,\xi):= i(\tau +\abf(\xi)\cdot n)+\ve |n|^2,
  $$
  for $J,\d >0$, let
  \begin{align*}
\Om_{\LL^\ve}(\tau,n;\d)&:=\{\xi\in (-L_0,L_0) \,:\, |\LL^\ve(i\tau,in,\xi)|\le \d\},\\
\om_{\LL^\ve}(J;\d)&:=\sup_{\tiny{\begin{matrix} \tau\in\R, n\in\Z^d\\ |n|\sim J\end{matrix}}} |\Om_{\LL^\ve}(\tau,n,\d)|
\end{align*}
and $\cL_\xi^\ve:=\po_\xi\cL^\ve$. As in \cite{GH}, we note that, for some $C>0$,
 $$
 \{\xi\in (-L_0,L_0)\,:\, |\cL^\ve(i\tau,in,\xi)|\le \d\}\subset \{\xi\in (-L_0,L_0)\,:\, |\cL(i\tau,in, \xi)| \le C\d\}
 $$
 which, combined with \eqref{e1.7'}, implies
 $$
 \om_{\cL^\ve} (J,\d)\le \om_{\cL}(J,C\d) \lesssim \left(\frac{\d}{J}\right)^\a\qquad \forall \d>0,\, \forall J\gtrsim 1.
 $$
 Further, $\cL_\xi^\ve(i\tau,in,\xi)= \cL_\xi(i\tau,in,\xi)$, and thus
 $$
  \sup_{\tiny{\begin{matrix}\tau\in\R,n\in\Z^d\\|n|\sim J \end{matrix}}}\sup_{\xi\in (-L_0,L_0)} |\cL_\xi^\ve (i\tau,in;\xi)|
  \le  \sup_{\tiny{\begin{matrix}\tau\in\R,n\in\Z^d\\|n|\sim J \end{matrix}}}\sup_{\xi\in (-L_0,L_0)} |\cL_\xi (i\tau,in;\xi)| \lesssim J.
  $$

 Therefore, $\LL^\ve$ satisfies the nondegeneracy conditions \eqref{e1.7'} and \eqref{e1.7''} uniformly in $\ve$, with $\beta=1$.

  Next, we may choose $\varphi$ in equation  \eqref{e5.3.2} as $\varphi(t,x)=\phi(t)\psi(x)$, where $\psi\in C_c^\infty(\cO)$ and $\phi=\phi^\lambda\in C_c^\infty([0,\infty))$ is a cut-off in time such that $0\leq \phi\leq 1$, $\phi\equiv 1$ on $[0,T-\lambda)$, $\phi\equiv 0$ on $[T,\infty)$ and $|\po_t \phi|\leq \frac{1}{\lambda}$ for some $\lambda\in (0,1)$.

  Note that this localization reduces the problem of the regularity of averages of $\psi\chi^\ve$ to the periodic case treated in \cite{GH}. The only difference from the kinetic equation studied in \cite{GH} (equation (3.3) from that paper) is the appearance of the term $\chi(\abf^\ve(\xi)\cdot\nabla\varphi -\ve\Delta\varphi)-2\ve\vartheta'(\xi){\bf1}_{u^\ve>\xi}\nabla u^\ve\cdot\nabla\varphi$, which may be treated exactly as the term $\chi \po_t\phi$ in their argument.

  Thus, the averaging techniques from the proof of corollary~3.3 in \cite{GH} may be applied to equation \eqref{e5.3.2} and, eventually sending $\lambda$ to zero, we obtain the following result.

  \begin{theorem}\label{T:5.3.1} Suppose \eqref{e1.7} is satisfied. Let $u^\ve$ be the kinetic solution of \eqref{e5.1}--\eqref{e5.3} and $\psi\in C_c^\infty(\cO)$. Then
\begin{equation}\label{e5.3.3}
 \| \psi u^\ve\|_{L^r(\Om\X[0,T]; W^{s,r}(\cO))}\le C_\psi( \| u_0\|_{L^{3}}^{3} +1),
 \end{equation}
uniformly in $\ve>0$,  with $s<\frac{\a^2\b}{6(1+2\a)}$,   $\frac1r>\frac{1-\theta}2+\theta$ and $\theta=\frac{\a}{4+\a}$. In particular,
\begin{equation}\label{e5.3.3.1}
 \| u^\ve\|_{L^r(\Om\X[0,T]; W^{s,r}(\cO_0))}\le C_{\cO_0}( \| u_0\|_{L^{3}}^{3} +1),
 \end{equation}
 for any $\cO_0\subset\subset \cO$, uniformly in $\ve$.
 \end{theorem}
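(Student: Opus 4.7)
The strategy is already announced in the paragraphs preceding the statement: the localized kinetic function $\varphi\chi^\ve$, with $\varphi(t,x)=\phi^{\lambda}(t)\psi(x)$, satisfies the kinetic--parabolic equation \eqref{e5.3.2}, which is of the same form as the one treated in Corollary~3.3 of Gess--Hofmanov\'a \cite{GH}, plus a few source terms produced by the spatial and temporal cut-offs. My plan is (i) to check that the right-hand side of \eqref{e5.3.2} fits the hypotheses of \cite[Cor.~3.3]{GH} with norms uniform in $\ve$, (ii) to invoke the averaging lemma on a periodic extension, and (iii) to send $\lambda\to 0$ and $\psi \to 1$ on $\cO_0$.

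For step~(i), I would estimate each source term separately. The finite-variation measure $q^\varphi = \varphi m^\ve - \tfrac{1}{2}\varphi G^2\delta_{u^\ve=\xi} + 2\ve\vartheta(\xi)\mathbf{1}_{u^\ve>\xi}\nabla u^\ve\cdot\nabla\varphi$ has total variation on $[0,T]\X\cO\X\R$ bounded uniformly in $\ve$: the energy estimate \eqref{Energy} controls $m^\ve$ (recall $m^\ve=\ve|\nabla u^\ve|^2\delta_{u^\ve=\xi}$) and, via Cauchy--Schwarz, the cross term carrying $\sqrt{\ve}\,\nabla u^\ve$, while Theorem~\ref{T:4.3'} bounds $u^\ve$ in $L^\infty$ and hence the $G^2$-contribution. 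The stochastic forcing $\sum_{k}(\varphi\delta_0 g_k^\ve - \po_\xi(\varphi\chi^\ve)g_k^\ve)\dot\beta_k$ has Hilbert--Schmidt norm uniformly bounded via \eqref{e1.4} and the $L^\infty$ bound on $u^\ve$. The localization remainders
\[
\chi^\ve\bigl(\po_t\varphi+\abf(\xi)\cdot\nabla\varphi-\ve\Delta\varphi\bigr)-2\ve\vartheta'(\xi)\mathbf{1}_{u^\ve>\xi}\nabla u^\ve\cdot\nabla\varphi
\]
play, line-for-line, the role of the source $\chi\po_t\phi$ in the proof of \cite[Cor.~3.3]{GH}; the factor $\sqrt{\ve}\,\nabla u^\ve$ in the last piece is absorbed via Cauchy--Schwarz using \eqref{Energy}, while $\ve\chi^\ve\Delta\varphi$ and $\chi^\ve\abf(\xi)\cdot\nabla\varphi$ are uniformly bounded in $L^\infty$.

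For step~(ii), the nondegeneracy conditions \eqref{e1.7'}--\eqref{e1.7''} required by \cite{GH} hold with $\beta=1$ uniformly in $\ve$ for the parabolic symbol $\cL^\ve(i\tau,in,\xi)=i(\tau+\abf(\xi)\cdot n)+\ve|n|^2$; this is precisely the verification already carried out in the excerpt via the inclusion $\{|\cL^\ve|\le\delta\}\subset\{|\cL|\le C\delta\}$ and the identity $\cL_\xi^\ve=\cL_\xi$. Since $\varphi\chi^\ve$ has compact $(t,x)$-support, extending it by zero to a torus $\bbT^d\supset\overline{\cO}$ places us squarely in the periodic setting of \cite{GH}, and Corollary~3.3 of that paper then yields
\begin{equation*}
\|\varphi u^\ve\|_{L^r(\Om\X[0,T];W^{s,r}(\cO))}\leq C_\psi\bigl(\|u_0\|_{L^3}^{3}+1\bigr),
\end{equation*}
with the stated exponents $s,r,\theta$; these come from interpolating the kinetic regularity coming from the averaging symbol with the $L^3$-moment estimate for $m^\ve$ derived (as in \cite{DV}) from It\^o's formula applied to $\tfrac13(u^\ve)^3$ in Proposition~\ref{P:5.3}. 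Sending $\lambda\to 0$ removes the time cut-off and gives \eqref{e5.3.3}; choosing $\psi\equiv 1$ on $\cO_0\Subset\cO$ delivers \eqref{e5.3.3.1}.

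The main obstacle will be the careful bookkeeping required to ensure that every constant produced by the localization, by the periodic extension, and by the averaging lemma stays independent of $\ve$, particularly for the $\ve$-weighted source terms. This only works because the Gess--Hofmanov\'a averaging lemma is designed precisely to absorb a parabolic dissipation of the correct scale in $\ve$, and because \eqref{Energy} provides the sharp bound on $\sqrt{\ve}\,\nabla u^\ve$ in $L^2(\Om\X[0,T]\X\cO)$. A secondary point to verify carefully is that the $L^3$-energy identity, used to produce the $\|u_0\|_{L^3}^{3}$ factor on the right-hand side, passes through the cut-off $\varphi$ with $\ve$-uniform constants; this is manageable because the maximum principle renders $u^\ve$ uniformly bounded and the extra boundary--like terms generated by $\nabla\psi$ are absorbed by the viscous dissipation via Young's inequality.
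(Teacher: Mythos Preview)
Your proposal is correct and follows essentially the same approach as the paper: the paper's argument is the discussion in Section~\ref{SS:5.2} preceding the theorem, namely localizing the kinetic equation by $\varphi=\phi^\lambda(t)\psi(x)$ to obtain \eqref{e5.3.2}, observing that $q^\varphi$ has $\ve$-uniformly bounded total variation by \eqref{Energy} and Theorem~\ref{T:4.3'}, verifying the nondegeneracy of $\cL^\ve$ uniformly in $\ve$, noting that the extra localization terms are handled exactly as the $\chi\po_t\phi$ term in \cite{GH}, applying Corollary~3.3 of \cite{GH} on the periodic extension, and sending $\lambda\to0$. Your write-up supplies more explicit bookkeeping (the Cauchy--Schwarz absorption of the $\sqrt\ve\,\nabla u^\ve$ cross terms, the role of the $L^3$-moment estimate for $m^\ve$), but the route is the same.
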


 \subsection{Compactness argument}\label{SS:5.3}

 The general lines of the compactness argument described here are motivated by the compactness argument  put forth  in \cite{Ha}.

 \begin{proposition} \label{P:10.4} For all $\l\in(0,1/2)$,  there exists a constant $C>0$ such that for all $\ve\in(0,1)$
$$
\bbE\|u^\ve\|_{C^\l([0,T];H^{-1}(\cO))}\le C.
$$
\end{proposition}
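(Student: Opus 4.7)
The strategy is to decompose $u^\ve(t)-u^\ve(s)$ via the SPDE \eqref{e5.1}, bound each resulting piece in $H^{-1}(\cO):=(H_0^1(\cO))^*$ (this choice of duality avoids boundary contributions under integration by parts against test functions), and then invoke the Kolmogorov continuity criterion. Concretely, for $0\le s\le t\le T$,
$$u^\ve(t)-u^\ve(s)=-\int_s^t\nabla\cdot\Abf^\ve(u^\ve(r))\,dr+\ve\int_s^t\Delta u^\ve(r)\,dr+\int_s^t\Phi_\ve(u^\ve(r))\,dW(r)=:\mathrm{A}+\mathrm{B}+\mathrm{C}.$$

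The term $\mathrm{A}$ is estimated pathwise: the maximum principle (Theorem~\ref{T:4.3'}) forces $\|\Abf^\ve(u^\ve)\|_{L^\infty(\cO)}\le C$ uniformly in $(\ve,r,\om)$, whence $\|\mathrm{A}\|_{H^{-1}}\le C|t-s|$. For $\mathrm{B}$, Cauchy-Schwarz in time gives the pathwise bound
$$\|\mathrm{B}\|_{H^{-1}}\le\ve|t-s|^{1/2}\|\nabla u^\ve\|_{L^2(0,T;L^2)}=|t-s|^{1/2}\sqrt{\ve}\cdot\bigl(\sqrt{\ve}\|\nabla u^\ve\|_{L^2(0,T;L^2)}\bigr),$$
in which the crucial extra factor $\sqrt{\ve}$ absorbs the shortfall in \eqref{Energy}, which only controls the scaled quantity $\sqrt{\ve}\|\nabla u^\ve\|_{L^2(L^2)}$. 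A standard higher-moment refinement of \eqref{Energy}---obtained by applying It\^o's formula to $\|u^\ve\|_{L^2}^2$, noting that the boundary contributions from $\ve\Delta u^\ve$ and $\nabla\cdot\Abf^\ve(u^\ve)$ cancel by \eqref{e5.3}, and then applying the Burkholder-Davis-Gundy inequality together with the $L^\infty$ bound on $u^\ve$---yields $\bbE\bigl(\ve\int_0^T\|\nabla u^\ve\|_{L^2}^2\,dr\bigr)^{p/2}\le C_p$ uniformly in $\ve\in(0,1)$ for every $p\ge 2$. Hence $\bbE\|\mathrm{B}\|_{H^{-1}}^p\le C_p|t-s|^{p/2}$. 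For $\mathrm{C}$, the embedding $L^2(\cO)\hookrightarrow H^{-1}(\cO)$, Burkholder-Davis-Gundy, \eqref{e1.4}, and the $L^\infty$ bound on $u^\ve$ yield
$$\bbE\|\mathrm{C}\|_{H^{-1}}^p\le C_p\,\bbE\biggl(\int_s^t\sum_{k\ge 1}\|g_k^\ve(\cdot,u^\ve(r))\|_{L^2}^2\,dr\biggr)^{p/2}\le C_p|t-s|^{p/2}, \qquad p\ge 2.$$

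Combining, $\bbE\|u^\ve(t)-u^\ve(s)\|_{H^{-1}(\cO)}^p\le C_p|t-s|^{p/2}$ for every $p\ge 2$, uniformly in $\ve\in(0,1)$. Kolmogorov's continuity theorem applied in the Banach space $H^{-1}(\cO)$ then gives $\bbE\|u^\ve\|_{C^\gamma([0,T];H^{-1})}^p\le C_{p,\gamma}$ for every $\gamma<\tfrac12-\tfrac1p$ uniformly in $\ve$; choosing $p$ arbitrarily large covers every $\l\in(0,1/2)$, yielding the desired bound. The only genuinely delicate point is the handling of $\mathrm{B}$: naively the diffusion is singular as $\ve\to 0$, and the argument works precisely because the factor $\ve$ in front of $\Delta u^\ve$ exactly compensates the $\ve^{-1/2}$ deficit inherent to the vanishing-viscosity energy identity.
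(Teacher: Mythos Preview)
Your proof is correct and follows essentially the same route as the paper: decompose via the SPDE, bound the flux and viscosity terms in $H^{-1}(\cO)$ using the maximum principle and the energy estimate, and handle the stochastic convolution by Burkholder--Davis--Gundy plus Kolmogorov. The one minor difference is that the paper treats the two deterministic integrals $\mathrm{A}$ and $\mathrm{B}$ together and directly as $C^{1/2}([0,T];H^{-1})$ paths via the pathwise inequality $\bigl\|\int_s^t h(r)\,dr\bigr\|_{H^{-1}}\le |t-s|^{1/2}\|h\|_{L^2(0,T;H^{-1})}$, taking expectation only at the end; this uses only the $p=2$ energy bound \eqref{Energy} and so avoids the higher-moment refinement you invoke for term $\mathrm{B}$. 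Your packaging of all three pieces through Kolmogorov is equally valid and arguably more uniform, but it costs that extra moment argument (which, as you note, is indeed routine once one has the $L^\infty$ bound on $u^\ve$ and the cancellation of boundary terms from \eqref{e5.3}).
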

\begin{proof} Recall that, due to \eqref{Energy}, the set $\{u^\ve\,:\,\ve\in(0,1)\}$ is bounded in
$$
L^2(\Om; L^2(0,T; H^1(\cO))).
$$
By Theorem~\ref{T:4.3'}, we may take $\Abf^\ve=\Abf$, which is Lipschitz. We then have,  in particular, that
$$
\{\div(\Abf(u^\ve))\},\quad \{\ve\Delta u^\ve\}
$$
are bounded in $L^2(\Om, L^2(0,T;H^{-1}(\cO)))$, and consequently
$$
\bbE\big\|u^\ve-\int_0^{\cdot}\Phi^\eta(u^\ve)\,dW\big\|_{C^{1/2}([0,T];H^{-1}(\cO)}\le C.
$$
Moreover, for all $\l\in(0,1/2)$, almost all paths of the above stochastic integral are $\l$-H\"older continuous $L^2(\cO)$-valued functions and
$$
\bbE\big\|\int_0^{\cdot}\Phi^\ve(u^\ve)\,dW\big\|_{C^\l([0,T];L^2(\cO))}\le C.
$$
Indeed, this is a consequence of the Kolmogorov continuity theorem (see, e.g., \cite{DPZ}) since the following uniform estimate is true. Let $a>2$, $s,t\in[0,T]$,
\begin{align*}
\bbE\Big\|\int_s^t\Phi^\eta(u^\ve)\,dW\Big\|^a&\le C\bbE\Big(\int_s^t\big\|\Phi^\eta(u^\ve)\big\|_{L^2(U;L^2(\cO))}^2\,dr\Big)^{a/2}\\
&\le C|t-s|^{a/2-1}\bbE\int_s^t\Big(\sum_{k\ge1}\|g_k^\eta(u^\ve)\|_{L^2(\cO)}^2\Big)^{a/2}\,dr\\
&\le C|t-s|^{a/2}\big(1+\bbE\sup_{0\le t\le T}\|u^\ve(t)\|_{L^2(\cO)}^a\big)\\
&\le C|t-s|^{a/2},
\end{align*}
 where we have made use of Burkholder inequality, \eqref{e1.4}  and Theorem \ref{T:4.3'}.
 \end{proof}

 Observe that, since, in Theorem~\ref{T:5.3.1}, $1< r<2$, from Proposition~\ref{P:10.4} it also follows that  $\bbE\|u^\ve\|_{C^\l([0,T];W^{-1,r}(\cO))}\le C$, for some $C>0$ independent of $\ve$.

 Let us define the path space
$$
\mathcal{X}_u=L^r(0,T;L^r(\cO))\cap C([0,T];W^{-2,r}(\cO)).
$$
Let us denote by $\mu_{u^\ve}$ the law of $u^\ve$ on $\mathcal{X}_u$, $\ve\in(0,1)$.

\begin{proposition}\label{P:10.5} The set $\{\mu_{u^\ve}\,:\, \ve\in(0,1)\}$ is tight and, therefore, relatively weakly compact in $\mathcal{X}_u$.
\end{proposition}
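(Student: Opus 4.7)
The plan is to produce, for every $\eta > 0$, a relatively compact subset $K_\eta \subset \mathcal{X}_u$ with $\mu_{u^\ve}(K_\eta) \ge 1 - \eta$ uniformly in $\ve \in (0,1)$. Since $\mathcal{X}_u$ is the intersection of $L^r(0,T; L^r(\cO))$ and $C([0,T]; W^{-2,r}(\cO))$ endowed with the sum of norms, it suffices to produce tightness in each factor separately and intersect the two associated compact sets (the limits coincide because convergence in either factor determines the limit as a distribution). The uniform-in-$\ve$ ingredients available are the interior spatial estimate \eqref{e5.3.3} from Theorem~\ref{T:5.3.1}, the global H\"older time regularity $\bbE\|u^\ve\|_{C^\lambda([0,T]; W^{-1,r}(\cO))} \le C$ from Proposition~\ref{P:10.4} and the subsequent remark, and the $L^\infty$ bound $\|u^\ve\|_{L^\infty} \le M := \max(|a|,|b|)$ from Theorem~\ref{T:4.3'}.

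The component in $C([0,T]; W^{-2,r}(\cO))$ is handled directly by Arzel\`a-Ascoli: the embedding $W^{-1,r}(\cO) \hookrightarrow W^{-2,r}(\cO)$ is compact (Rellich-Kondrachov on the bounded domain $\cO$), so $\{v\,:\, \|v\|_{C^\lambda([0,T]; W^{-1,r}(\cO))} \le R\}$ is relatively compact in $C([0,T]; W^{-2,r}(\cO))$, with complement of $\mu_{u^\ve}$-probability bounded by $C/R$ via Markov's inequality.

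The nontrivial component is $L^r(0,T; L^r(\cO))$, where \eqref{e5.3.3} supplies spatial Sobolev regularity only of $\psi u^\ve$ for cutoffs $\psi \in C_c^\infty(\cO)$, not of $u^\ve$ itself. I would circumvent this with a cutoff-and-diagonal construction built on the maximum principle. Fix an increasing sequence $\psi_n \in C_c^\infty(\cO)$ with $\psi_n \uparrow 1$ pointwise on $\cO$ and $|\{\psi_n < 1\}| \to 0$. Multiplication by $\psi_n$ is bounded on $W^{-2,r}(\cO)$, so $\psi_n u^\ve$ simultaneously satisfies $\bbE\|\psi_n u^\ve\|_{L^r(0,T; W^{s,r}(\cO))}^r \le C_n$ and $\bbE\|\psi_n u^\ve\|_{C^\lambda([0,T]; W^{-2,r}(\cO))} \le C_n$. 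By the Aubin-Lions-Simon lemma applied to $W^{s,r}(\cO) \Subset L^r(\cO) \hookrightarrow W^{-2,r}(\cO)$ (Simon's version using H\"older equicontinuity in $W^{-2,r}$ in place of a derivative bound), the set
\begin{equation*}
A_n^R := \{v \,:\, \|v\|_{L^r(0,T; W^{s,r}(\cO))} + \|v\|_{C^\lambda([0,T]; W^{-2,r}(\cO))} \le R\}
\end{equation*}
is relatively compact in $L^r(0,T; L^r(\cO))$. I would then set
\begin{equation*}
K_\eta := \{u\,:\, \|u\|_{L^\infty((0,T)\X\cO)} \le M\} \cap \bigcap_{n \ge 1} \{u\,:\, \psi_n u \in A_n^{R_n}\},
\end{equation*}
with $R_n$ chosen (via Markov on the bounds just noted) so that $\sum_n \mu_{u^\ve}(\psi_n u^\ve \notin A_n^{R_n}) < \eta/2$ uniformly in $\ve$. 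The $L^\infty$ constraint forces $\|u - \psi_n u\|_{L^r((0,T)\X\cO)} \le M T^{1/r} |\{\psi_n < 1\}|^{1/r} \to 0$ uniformly on $K_\eta$, so a standard diagonal extraction promotes the relative compactness of $\{\psi_n u\,:\, u \in K_\eta\}$ for each $n$ into relative compactness of $K_\eta$ itself in $L^r(0,T; L^r(\cO))$.

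The main obstacle is reconciling the merely \emph{interior} spatial regularity delivered by Theorem~\ref{T:5.3.1} with the \emph{global} compactness demanded on $\cO$. The $L^\infty$ bound from the maximum principle is precisely what renders the boundary layer $\{\psi_n < 1\}$ negligible and allows the cutoff-diagonal argument to close.
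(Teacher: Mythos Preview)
Your proposal is correct and follows essentially the same approach as the paper: both combine the interior $W^{s,r}$ spatial regularity (via cutoffs in your case, nested subdomains $\cO_n\subset\subset\cO$ in the paper's), the global $C^\lambda([0,T];W^{-1,r})$ time regularity, and the $L^\infty$ maximum principle to render the boundary layer negligible, then run a diagonal extraction. The only technical difference is that the paper obtains the $L^r(0,T;L^r(\cO_0))$ compactness by a direct interpolation argument (between $W^{-1,r}$ and $W^{s,r}$, implemented via $(-\Delta)^{-1}$ on $\cO_0$) rather than by invoking Aubin--Lions--Simon, but the underlying structure is the same.
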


\begin{proof}
Let $\cO_1\subset\subset \cO_2  \subset\subset \cO_3 \subset\subset \ldots \subset \subset \cO$ be a sequence of nonempty smooth open sets such that $\cup_{n=1}^\infty \cO_n=\cO$. Given $R>0$, let us consider the set
\begin{multline*}
 K_R=\{u\in L^\infty((0,T)\X\cO)\cap  L^r(0, T;W_{\text{loc}}^{s,r}(\cO))\cap C^\l([0,T];W^{-1,r}(\cO))\,:\,\\
\|u\|_{L^\infty((0,T)\X\cO)} \leq R,\quad  \|u\|_{C^\l([0,T];W^{-1,r}(\cO))}\le R, \\ \text{ and }\quad  \|u\|_{L^r(0,T;W^{s,r}(\cO_n))} \leq 2^n (C_{\cO_n} + 1) R, \quad \forall n \geq 1\},
 \end{multline*}
where the constants $C_{\cO_n}$ are given by \eqref{e5.3.3.1}.

We assert that $K_R$ is a relatively compact subset of $\Xal_u$. Indeed, let $\{ \psi_k\}_{k\in\mathbb{N}}$ be a sequence in $K_R$. Taking a countable dense set in $(0,T)$ consisting of Lebesgue points of the elements of the sequence, as Banach space valued functions, from the boundedness in $L^\infty(\cO)$, we may extract a subsequence (not relabeled) which strongly converges in $W^{-1,r}(\cO)$ at each point of the dense set. Hence, by the boundedness in $C^\l(0,T;W^{-1,r}(\cO))$, the subsequence strongly  converges in
$W^{-1,r}(\cO)$  at all points of $[0,T]$, and so, by dominated convergence, it strongly converges in $L^r(0,T; W^{-1,r}(\cO))$. On the other hand, if $\cO_0$ is any smooth open subset of $\cO$ with $\bar\cO_0\subset \cO$, by interpolation we have (see, e.g., \cite{BL})
$$
\|\varphi\|_{L^r(0,T;W^{2,r}(\cO_0))}\le \|\varphi\|_{L^r(0,T;W^{1,r}(\cO_0))}^{s/(1+s)}\|\varphi\|_{L^r(0,T;W^{2+s,r}(\cO_0))}^{1/(1+s)}.
$$
Then, taking $\varphi=(-\Delta)^{-1}\psi_k$, where by $-\Delta$ we mean the minus Laplacian operator with 0 Dirichlet condition on $\po\cO_0$, we conclude that it strongly converges in $L^r(0,T;L^r(\cO_0))$, using that $(-\Delta)^{-1}$ isomorphically takes  $L^r(0,T;L^r(\cO_0))$ onto
$L^r(0,T; W^{2,r}\cap W_0^{1,r}(\cO_0))$.

Applying the above argument repeatedly for $\cO_1$, $\cO_2$, etc.\ in the place of $\cO_0$, by a diagonal argument we find a subsequence that converges in $L^r(0,T;L_{loc}^r(\cO))$. And since this sequence is uniformly bounded in $L^\infty((0,T)\times\cO)$, it converges in $L^r(0,T;L^r(\cO))$, by the dominated convergence theorem.

Finally, using the embedding
\begin{align*}
C^\l([0,T];W^{-1,r}(\cO))&\overset{c}{\hookrightarrow} C([0,T];W^{-2,r}(\cO)),
\end{align*}
we conclude that (the subsequence of) $\{ \psi_k\}_{\k\in\mathbb{N}}$ is convergent in $\Xal_u$ by possibly passing to a further subsequence, thus proving that $K_R$ is relatively compact in $\Xal_u$.

Now, concerning  the tightness of  $\{\mu_{u^\ve}\,:\,\ve\in(0,1)\}$, we see that
 \begin{multline*}
 \mu_{u^\ve}(K_R^C)\le \bbP\Big(\|u^\ve\|_{L^\infty((0,T)\X\cO)}>R\Big)+\bbP\Big(\|u^\ve\|_{C^\lambda([0,T];W^{-1,r}(\cO))} > R\Big)\\
 +\sum_{n=1}^\infty \bbP\Big(\|u^\ve\|_{L^r(0,T;W^{s,r}(\cO_n)}> 2^n (C_{\cO_n} + 1)R \Big) = (I) + (II) + \sum_{n=1}^\infty (III)_n.
 \end{multline*}
 Of course, because of Theorem \ref{T:4.3'},
 $$(I) \leq \frac{1}{R}\bbE\|u^\ve\|_{L^\infty((0,T)\X\cO)} \leq \frac{C}{R}$$
 (indeed, $(I) = 0$ if $R > |a|, |b|$), and, because of Proposition \ref{P:10.4},
 $$(II) \leq \frac{1}{R}\bbE\|u^\ve\|_{C^\l([0,T];W^{-1,r}(\cO)} \leq \frac{C}{R}.$$
 Analogously, but now applying estimate \eqref{e5.3.3.1} in Theorem \ref{T:5.3.1},
 $$(III)_n \leq \frac{1}{2^n(C_{\cO_n} + 1) R}\bbE\|u^\ve\|_{L^r(0,T;W^{s,r}(\cO_n))} \leq \frac{C}{2^n R}.$$

 Everything considered,
  $$\mu_{u^\ve}(K_R^C) \leq C/R,$$
  and since $R>0$ is arbitrary, we conclude that  $\{ \mu^\ve \}$ is tight.

\end{proof}

Passing to a weakly convergent subsequence $\mu^n=\mu_{u^{\ve_n}}$, and denoting the limit law by $\mu$, we now apply the Skorokhod representation theorem (see, e.g., \cite{B2}) to infer the following result.

\begin{proposition}\label{P:10.6} There exists a probability space $(\bar \Om,\bar \F,\bar \bbP)$ with a sequence of $\Xal_u$-valued random variables $\bar u^n$, $n\in\N$, and
$\bar u$ such that:
\begin{enumerate}
\item[(i)] the laws of $\bar u^n$ and $\bar u$ under $\bar \bbP$ coincide with $\mu^n$ and $\mu$, respectively,
\item[(ii)] $\bar u^n$ converges $\bar \bbP$-almost surely to $\bar u$ in the topology of $\Xal_u$.
\end{enumerate}
\end{proposition}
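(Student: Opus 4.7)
The plan is to reduce the statement to a direct application of the classical Skorokhod representation theorem, the bulk of the work already having been performed in Propositions~\ref{P:10.4} and \ref{P:10.5}. First, I would verify that the path space $\Xal_u = L^r(0,T;L^r(\cO))\cap C([0,T];W^{-2,r}(\cO))$, equipped with the natural intersection topology (for instance, the metric $d(u,v)=\|u-v\|_{L^r(0,T;L^r(\cO))}+\|u-v\|_{C([0,T];W^{-2,r}(\cO))}$), is a Polish space, i.e., separable and completely metrizable. Both $L^r(0,T;L^r(\cO))$ and $C([0,T];W^{-2,r}(\cO))$ are separable Banach spaces for $1<r<\infty$, and their intersection endowed with the sum norm is again a separable Banach space; hence $\Xal_u$ is Polish. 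This is the indispensable topological hypothesis for Skorokhod's theorem.

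Next, from Proposition~\ref{P:10.5} the family $\{\mu_{u^\ve}\}_{\ve\in(0,1)}$ is tight on $\Xal_u$, so by Prohorov's theorem there exists a subsequence $\ve_n \downarrow 0$ such that $\mu^n := \mu_{u^{\ve_n}}$ converges weakly to a Borel probability measure $\mu$ on $\Xal_u$. At this point I would invoke the Skorokhod representation theorem (see, e.g., \cite{B2}) on the Polish space $\Xal_u$: there exist a probability space $(\bar\Om,\bar\F,\bar\bbP)$ and $\Xal_u$-valued random variables $\bar u^n$ ($n\in\N$) and $\bar u$ on $(\bar\Om,\bar\F,\bar\bbP)$ such that
\begin{equation*}
\operatorname{Law}_{\bar\bbP}(\bar u^n) = \mu^n,\qquad \operatorname{Law}_{\bar\bbP}(\bar u) = \mu,
\end{equation*}
and such that $\bar u^n \to \bar u$ $\bar\bbP$-a.s.\ in the topology of $\Xal_u$. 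This is exactly the two assertions in the statement.

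There is really no serious obstacle here, since the substantive analytic work (the uniform $L^\infty$, time-regularity and fractional-in-space estimates, and the resulting tightness) was carried out in the preceding results. The only point requiring a line of justification is the Polish property of $\Xal_u$, which I would record explicitly so that the version of Skorokhod's theorem being invoked is unambiguous. I would also remark that the representation is tailored to the convergent subsequence $\{\mu^n\}$, so $\bar u^n$, $\bar u$ inherit the uniform estimates from Theorems~\ref{T:4.3'}, \ref{T:5.3.1} and Proposition~\ref{P:10.4} via the equality of laws, a fact which will be used in the subsequent passage to the limit but is not required for the present statement.
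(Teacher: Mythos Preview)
Your proposal is correct and follows essentially the same approach as the paper: the paper simply states that, having passed to a weakly convergent subsequence $\mu^n\to\mu$ via Proposition~\ref{P:10.5}, one applies the Skorokhod representation theorem (citing \cite{B2}) to obtain the result. Your added verification that $\Xal_u$ is Polish is a useful detail the paper leaves implicit, but otherwise the arguments coincide.
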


\bigskip
Now, let us define $\tilde \Om= \bar \Om\X \Om$, and $\tilde \bbP=\bar \bbP\X \bbP$, the product measure. Also, let $\tilde \F$ be the $\s$-algebra generated by $\bar \F\X \F$. We also extend
$\bar u^n$, $\bar u$ and $W:\Om\to C([0,T]; {\frak U}_0)$ to $\tilde \Om$ by simply setting $\tilde u^n(\bar\om,\om):=\bar u^n(\bar\om)$, $\tilde u(\bar\om,\om):=\bar u(\bar\om)$, $\tilde W(\bar\om,\om):=W(\om)$.

We have $(\tilde u,\tilde W)\in \Xal_u \X C([0,T]; {\frak U}_0)\subset C([0,T],W^{-2,r}(\cO))\X C([0,T], {\frak U}_0)$, with continuous inclusion. On the other hand, given any Banach space $E$,
and $t\in[0,T]$, the operator  $\rho_t: C([0,T];E)\to E$, with $\rho_t k=k(t)$, is continuous. So, let $(\tilde\F_t)$ be the $\tilde \bbP$-augmented canonical filtration of the process $(\rho_t\tilde u, \rho_t\tilde W)$, $t\in[0,T]$, that is
$$
\tilde\F_t=\s\left(\s(\{\rho_s\tilde u,\rho_s\tilde W\,:\, s\in[0,t]\})\cup\{N\in\tilde\F\,:\,\tilde \bbP(N)=0\}\right).
$$
We observe that, by the maximum principle,  $\tilde u^n$ is uniformly bounded in $L^\infty(\tilde \Om\X [0,T]\X\cO)$, and so $\tilde u^n \wto \tilde u$ in the weak star topology of $L^\infty(\tilde\Om\X[0,T]\X\cO)$. In particular, $\tilde u$ is predictable.

We notice that the  process $\tilde W$ is a  $(\tilde\F_t)$-cylindrical Wiener process, that is, $\tilde W=\sum_{k\ge1}\tilde \b_k e_k$, where $\{\b_k\}_{k\ge1}$, with $\tilde \b_k:\tilde\Om\X[0,T]\to\R$,  is the collection of mutually independent real-valued $(\tilde\F_t)$-processes given by $\tilde \b_k(\bar \om,\om,t)=\b_k(\om,t)$. In particular, a.s.\ in $\tilde\Om$, $\tilde{W}\in C([0,T],{\frak U}_0)$.

\subsubsection{Identification of the limit}\label{SS:5.4}

We say that $\big((\tilde \Om,\tilde\F,(\tilde\F_t),\tilde \bbP),\tilde W,\tilde u\big)$ is a  martingale weak entropy solution to \eqref{e1.1}--\eqref{e1.3}  if  $\tilde u$ satisfies Definition~\ref{D:2.3}, with  $(\tilde \Om, \tilde \bbP)$ instead of $(\Om,\bbP)$,  and $\tilde W$ instead of $W$.

\begin{proposition}\label{P:10.7} $\big((\tilde \Om,\tilde\F,(\tilde\F_t),\tilde \bbP),\tilde W,\tilde u\big)$ is a martingale weak entropy solution to \eqref{e1.1}--\eqref{e1.3}.
\end{proposition}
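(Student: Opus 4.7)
The plan is to pass to the limit in the entropy identity of Lemma~\ref{L:5.4} and in the weak formulation~\eqref{eL5.4.2}, both written for $u^{\ve_n}$, after transferring them to the new probability space via equality of laws. The only genuinely delicate point is the identification of the limit of the stochastic integrals on the new space; everything else will follow from the almost sure convergence $\bar u^n\to\bar u$ in $\Xal_u$ provided by Proposition~\ref{P:10.6} together with the uniform $L^\infty$-bound furnished by Theorem~\ref{T:4.3'}.

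First, since $\bar u^n$ and $u^{\ve_n}$ have the same law on $\Xal_u$ and since the right-hand side of \eqref{eL5.4.2} can be written as a measurable functional of $(u^{\ve_n},W)$, a standard argument (e.g.~as in \cite{Ha} or \cite{DHV}) shows that there exists a cylindrical $(\tilde\F_t)$-Wiener process on $\tilde\Om$ (namely the extension $\tilde W$ built above) such that the pair $(\bar u^n,\tilde W)$ satisfies, $\tilde\bbP$-a.s., for all $\phi\in C_c^\infty([0,T)\X\R^d)$,
\begin{multline*}
\int_0^T\!\!\int_\cO \bar u^n\phi_t\,dx\,dt+\int_0^T\!\!\int_\cO \Abf(\bar u^n)\cdot\nabla\phi\,dx\,dt\\
+\int_0^T\!\!\int_\cO \phi\,\Phi_{\ve_n}(\bar u^n)\,dx\,d\tilde W=\ve_n\int_0^T\!\!\int_\cO \nabla\bar u^n\cdot\nabla\phi\,dx\,dt,
\end{multline*}
together with the analogue of the entropy identity \eqref{eL5.4} for every $\eta\in C^2(\R)$ convex with $\eta''\in L^\infty$ and every non-negative $\psi\in C_c^\infty(\cO)$ (the term $-\ve\la\eta''(\bar u^n)|\nabla\bar u^n|^2,\psi\ra$ has a sign and may thus be discarded to obtain an inequality).

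Second, I will pass to the limit in these relations. By Proposition~\ref{P:10.6}, $\bar u^n\to\bar u$ strongly in $L^r((0,T)\X\cO)$, $\tilde\bbP$-a.s., and $\bar u^n\to\tilde u$ weakly-$\star$ in $L^\infty(\tilde\Om\X(0,T)\X\cO)$; the maximum principle (Theorem~\ref{T:4.3'}) gives uniform boundedness, so by dominated convergence the deterministic terms $\eta(\bar u^n)$, $q(\bar u^n)$, $\Abf(\bar u^n)$ and $G^2(x,\bar u^n)\eta''(\bar u^n)$ all converge in the required $L^p$-senses. The viscous contribution $\ve_n\int\nabla\bar u^n\cdot\nabla\phi\,dx\,dt$ vanishes because of the energy estimate \eqref{Energy} and Cauchy--Schwarz. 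It remains to handle the stochastic terms, which constitute the main obstacle. Following the Gy\"ongy--Krylov type scheme used in \cite{Ha,DHV}, I introduce the $(\tilde\F_t)$-martingales
\[
M^n(t):=\int_0^t\!\!\int_\cO \phi(s,x)\Phi_{\ve_n}(\bar u^n(s))\,dx\,d\tilde W(s),
\qquad
N^n(t):=\int_0^t\!\!\int_\cO \psi(x)\eta'(\bar u^n(s))\Phi_{\ve_n}(\bar u^n(s))\,dx\,d\tilde W(s),
\]
together with their cross-variation processes with $\tilde W$ obtained from the It\^o isometry; by using the convergence of $\bar u^n$, the growth condition \eqref{e1.4} and the uniform convergence $g_k^{\ve_n}\to g_k$, I will show that $M^n$ and $N^n$ converge in $L^2(\tilde\Om;C([0,T]))$ to the analogous objects built from $\tilde u$, and that the limiting processes have precisely the cross-variation with $\tilde W$ that identifies them, via the martingale representation lemma of \cite{DHV}, as $\int_0^\cdot\la\phi\,\Phi(\tilde u),\cdot\ra\,d\tilde W$ and $\int_0^\cdot\la\psi\eta'(\tilde u)\Phi(\tilde u),\cdot\ra\,d\tilde W$, respectively.

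Combining these convergences yields, $\tilde\bbP$-a.s., the weak entropy inequality \eqref{e2.4} and the identity \eqref{e2.3} for $\tilde u$ with respect to $\tilde W$; together with the predictability of $\tilde u$ and the maximum principle, this shows that $\tilde u$ is a weak entropy solution in the sense of Definition~\ref{D:2.3} on the stochastic basis $(\tilde\Om,\tilde\F,(\tilde\F_t),\tilde\bbP,\tilde W)$, which is the conclusion of Proposition~\ref{P:10.7}. The main delicate point, as anticipated, is the justification of the martingale identification step, since the weak-$\star$ convergence of $\bar u^n$ is a priori insufficient to pass to the limit inside the It\^o integral; this is precisely the place where the strong convergence in $L^r((0,T)\X\cO)$ obtained from the averaging-lemma estimate of Theorem~\ref{T:5.3.1} becomes indispensable.
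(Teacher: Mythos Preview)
Your high-level plan—transfer to the new space by equality of laws, pass to the limit in the deterministic terms via the almost-sure convergence and dominated convergence, and identify the stochastic part through a martingale/cross-variation argument—is exactly the paper's strategy. But your first step has a real gap under the construction actually used here. Skorokhod's theorem was applied to the law of $u^{\ve_n}$ \emph{alone} (Proposition~\ref{P:10.6}); the Wiener process was then adjoined via the product space $\tilde\Om=\bar\Om\times\Om$ with product measure, setting $\tilde W(\bar\om,\om):=W(\om)$. Consequently $\bar u^n$ and $\tilde W$ are \emph{independent} under $\tilde\bbP$, so the joint law of $(\bar u^n,\tilde W)$ is not that of $(u^{\ve_n},W)$, and the claim that ``$(\bar u^n,\tilde W)$ satisfies the approximate equation'' is false. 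The ``standard argument'' of \cite{Ha,DHV} you cite applies Skorokhod to the \emph{joint} law of $(u^n,W)$, producing $n$-dependent Wiener processes $\bar W^n$; that is not the present setup. In particular your processes $M^n,N^n$, written as It\^o integrals of $\Phi_{\ve_n}(\bar u^n)$ against $d\tilde W$, are not the martingale parts of the approximate equations on $\tilde\Om$ (and $\bar u^n$ is not even adapted to the filtration $(\tilde\F_t)$, which is generated by $(\tilde u,\tilde W)$).

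The paper therefore never writes an It\^o integral for $\bar u^n$ on the new space. It defines $\tilde M_\eta^n(t)$ as a purely deterministic functional of $\tilde u^n$—the left side of the entropy identity minus all the deterministic right-hand terms, \emph{including} the nonnegative dissipation $\ve_n\la\eta''(\tilde u^n)|\nabla\tilde u^n|^2,\psi\ra$ (which is not discarded: its weak-$\star$ limit in $L^2_w(\tilde\Om;\M_b)$ furnishes the entropy defect measure $\tilde\mu_\eta$). It then transfers, via equality of laws from $\Om$ to $\tilde\Om$, the three martingale identities for $M_\eta^n$: the martingale property, the quadratic variation $\lQ M_\eta^n\rQ=\sum_k\int_0^\cdot\la\eta'(u^n)g_k^n(u^n),\psi\ra^2\,ds$, and the cross-variation $\lQ M_\eta^n,\b_k\rQ=\int_0^\cdot\la\eta'(u^n)g_k^n(u^n),\psi\ra\,ds$. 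Passing to the limit in these three relations along $t\in\cD$ by Vitali's theorem (uniform integrability plus a.s.\ convergence in $\Xal_u$) yields $\lQ\tilde M_\eta-\int_0^\cdot\la\eta'(\tilde u)\Phi(\tilde u)\,d\tilde W,\psi\ra\rQ=0$, and Proposition~A.1 of \cite{Ha} (handling martingales indexed by a dense set) then identifies $\tilde M_\eta$ with the stochastic integral. The same scheme applied to $\eta(u)=u$ with test functions $\psi\in C^\infty(\R^d)$ yields \eqref{e2.3}.
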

\begin{proof}
Given a convex $\eta\in C^2(\R)$ and   a test function $0\le \psi \in C_c^\infty( \cO)$ let us define for all $s\le t\in[0,T]$
\begin{align*}
M_\eta^n(t)&=\la \eta(u^n(t)),\psi\ra-\la \eta(u_0^n),\psi\ra -\int_0^t\la q(u^n(s)),\nabla \psi\ra\,ds \\
& +\ve_n\int_0^t\la \nabla \eta(u^n(s),\nabla \psi\ra\,ds+\ve_n\int_0^t\la \eta''(u^{\ve_n}(s))|\nabla u^{\ve_n}(s)|^2,\psi\ra\,ds \\
&-\frac12\int_0^t \la {G^{\ve_n}}^2(\cdot,u^{\ve_n}(s))\eta''(u^{\ve_n}),\psi\ra\,ds,\qquad n\in\N,
\end{align*}
\begin{align*}
\tilde M_\eta^n(t)&=\la \eta(\tilde u^n(t)),\psi\ra-\la \eta(\tilde u_0^n),\psi\ra -\int_0^t\la q(\tilde u^n(s)),\nabla \psi\ra\,ds \\
& +\ve_n\int_0^t\la \nabla \eta(\tilde u^n(s)),\nabla \psi\ra\,ds+\ve_n\int_0^t\la \eta''(\tilde u^{\ve_n}(s))|\nabla \tilde u^{\ve_n}(s)|^2,\psi\ra\,ds\\
&-\frac12\int_0^t \la {G^{\ve_n}}^2(\cdot,\tilde u^{\ve_n}(s))\eta''(\tilde u^{\ve_n}),\psi\ra\,ds,\qquad n\in\N,\\
\end{align*}
\begin{align*}
\tilde M_\eta(t)=\la \eta(\tilde u(t)),\psi\ra-\la &\eta(\tilde u_0),\psi\ra -\int_0^t\la q(\tilde u(s)),\nabla \psi\ra\,ds \\
& +\la \tilde\mu_\eta, \chi_{[0,t)}\psi\ra  -\frac12\int_0^t \la {G}^2(\cdot,\tilde u(s))\eta''(\tilde u),\psi\ra\,ds,
\end{align*}
where $\tilde \mu_\eta$ is the limit, by passing to a subsequence if necessary, of
$$
\ve\eta''(\tilde u^{\ve_n})|\nabla \tilde u^{\ve_n}|^2
$$
in $L_w^2(\tilde\Om; \M_b([0,T]\X\cO))$, the space of the weak star measurable mappings $m:\tilde \Om\to\M_b([0,T]\X\cO)$, such that  $\bbE\|m\|_{\M_b}^2<\infty$, where $\M_b([0,T]\X\cO)$ is the space of bounded Radon measures over $[0,T]\X\cO$. Indeed, we have
\begin{align*}
&\tilde\bbE\left|\ve_n\int_0^T\int_{\cO} \eta''(\tilde u^{\ve_n})|\nabla \tilde u^{\ve_n}|^2\,dt\,dx\right|^2=
\bbE\left|\ve_n\int_0^T\int_{\cO} \eta''( u^{\ve_n})|\nabla  u^{\ve_n}|^2\,dt\,dx\right|^2\le C,
\end{align*}
which follows from the entropy identity \eqref{eL5.4} with $\psi\equiv1$, $t=T$, $s=0$,  by taking the square, then the expectation, using It\^o isometry and making trivial estimates.

Let $\cD\subset [0,T]$ be a subset of full measure such that $\tilde u^n(t)\to \tilde u(t)$ in $L^r(\tilde \Om\X\cO)$ and $\tilde \bbP\left(\tilde \mu_\eta(\{\tau=t\}\X\cO)\right)=0$, for $t\in\cD$. We claim that the processes
\begin{equation}\label{emartin1}
\tilde M_\eta, \qquad \tilde M_\eta^2-\sum_{k\ge1} \int_0^{\cdot}\la g_k(\tilde u)\eta'(\tilde u),\varphi\ra^2\, dr,\qquad   \tilde M_\eta \tilde \b_k-\int_0^{\cdot}\la g_k(\tilde u)\eta'(\tilde u),\varphi\ra\,dr,
\end{equation}
 are $(\tilde \F_t)$-martingales indexed by $t\in\cD$.

 Indeed, for all $n\in\N$, the process
 $$
 M_\eta^n=\int_0^{\cdot}\la \eta'(u^n) \Phi^n(u^n)\,dW(s),\psi\ra=\sum_{k\ge1}\int_0^{\cdot}\la\eta'(u^n) g_k^n(u^n),\psi\ra\,d\b_k(s)
 $$
 is a square integrable $(\F_t)$-martingale by \eqref{e1.4} and \eqref{eL5.4}. Denoting by $\lQ\cdot,\cdot\rQ$ the quadratic variation, by the Doob-Meyer decomposition (see, e.g., \cite{KS}) we then have that
 $$
 (M_\eta^n)^2-\sum_{k\ge1}\int_0^{\cdot}\la \eta'(u^n)g_k^n(u^n),\psi\ra^2\,ds,\qquad M_\eta^n\b_k-\int_0^{\cdot}\la\eta'(u^n)g_k^n(u^n),\psi\ra\,ds
 $$
 are $(\F_t)$-martingales, since
 $$
 \lQ M_\eta^n\rQ=\sum_{k\ge1} \int_0^{\cdot} \la \eta'(u^n) g_k^n(u^n),\psi\ra^2\,ds,\qquad \lQ M_\eta^n,\b_k\rQ=\int_0^{\cdot}\la\eta'(u^n)g_k^n(u^n),\psi\ra\,ds.
 $$

 Let $\gamma: W^{-2,r}(\cO)\X {\frak U}_0\to [0,1]$ be arbitrarily given.  From what we have just seen and the equality of laws, we have
 \begin{equation}\label{e10.4}
\begin{aligned}
&\tilde \bbE\g(\rho_s\tilde u^n,\rho_s\tilde W)[\tilde M_\eta^n(t)-\tilde M_\eta^n(s)]\\
&\qquad=\bbE\g(\rho_su^n,\rho_sW)[M_\eta^n(t)-M_\eta^n(s)]=0,
\end{aligned}
\end{equation}
\begin{equation}\label{e10.5}
\begin{aligned}
&\tilde \bbE\g(\rho_s\tilde u^n,\rho_s\tilde W)\Big[(\tilde M_\eta^n)^2(t)-(\tilde M_\eta^n)^2(s)-\sum_{k\ge1}\int_s^t\la \eta'(u^n)g_k^n(u^n),\varphi\ra^2\,dr\Big]\\
&\qquad= \bbE\g(\rho_s u^n,\rho_s W)\Big[(M_\eta^n)^2(t)-(M_\eta^n)^2(s)-\sum_{k\ge1}\int_s^t\la \eta'(u^n)g_k^n(u^n),\varphi\ra^2\,dr\Big]\\
&\qquad=0
\end{aligned}
\end{equation}
\begin{equation}\label{e10.6}
\begin{aligned}
&\tilde \bbE\g(\rho_s\tilde u^n,\rho_s\tilde W)\Big[\tilde M_\eta^n(t)\tilde\b_k^n(t)-\tilde M_\eta^n(s)\tilde\b_k^n(s)-\int_0^s\la \eta'(u^n)g_k^n(\tilde u^n),\varphi\ra\,dr\Big]\\
&\qquad=\bbE\g(\rho_s u^n,\rho_s W)\Big[M_\eta^n(t)\b_k(t)- M_\eta^n(s)\b_k(s)-\int_s^t\la \eta'(u^n) g_k(u^n),\varphi\ra\,dr\Big]\\
&\qquad=0.
\end{aligned}
\end{equation}
Then, for $s,t\in \cD$ the expectations in \eqref{e10.4}--\eqref{e10.6} converge by the Vitali convergence theorem, since all terms are uniformly integrable and converge $\tilde\bbP$-a.s.\ by Proposition~\ref{P:10.6}. Hence
\begin{equation}\label{emartin2}
\begin{aligned}
&\tilde\bbE\g(\rho_s\tilde u,\rho_s\tilde W)\left[\tilde M_\eta(t) -\tilde M_\eta(s)\right]=0,\\
&\tilde \bbE\g(\rho_s\tilde u,\rho_s \tilde W) \left[\tilde M_\eta^2(t)-\tilde M_\eta^2(s)-\sum_{k\ge 1}\int_s^t\la \eta'(\tilde u)g_k(\tilde u),\psi\ra^2\,dr \right]=0,\\
&\tilde \bbE\g(\rho_s\tilde u,\rho_s\tilde W)\left[\tilde M_\eta\tilde\b_k(t)-\tilde M(s)\tilde\b_k(s)-\int_s^t\la \eta'(\tilde u)g_k(\tilde u),\psi\ra\,dr\right]=0,
\end{aligned}
\end{equation}
which gives the $(\tilde \F_t)$-martingale property for $t,s\in\cD$.

If all the processes in \eqref{emartin1} were continuous-time martingales then we would have
\begin{equation}\label{emartin3}
\lQ \tilde M_\eta -\int_0^{\cdot}\la \eta'(\tilde u)\Phi(\tilde u)\,d\tilde W, \psi\ra \rQ=0,
\end{equation}
which  implies the equality $\tilde M_\eta=\int_0^{\cdot}\la \eta'(\tilde u) \Phi(\tilde u)\,d\tilde W,\psi\ra $, and so
\begin{equation}\label{emartin4}
\begin{aligned}
&\la \eta(\tilde u(t)),\psi\ra-\la \eta(\tilde u_0),\psi\ra =\int_0^t\la q(\tilde u(s)),\nabla \psi\ra\,ds \\
& -\la \tilde\mu_\eta, \chi_{[0,t)}\psi\ra  +\frac12\int_0^t \la {G}^2(\cdot,\tilde u(s))\eta''(\tilde u),\psi\ra\,ds+ \int_0^t\la \eta'(\tilde u) \Phi(\tilde u)\,d\tilde W,\psi\ra.
\end{aligned}
\end{equation}
In the case we are dealing here, that is, of martingales indexed by $t\in\cD$, we employ proposition~A.1 in \cite{Ha} to conclude, from \eqref{emartin2}, the validity of \eqref{emartin4} for all $0\le \psi\in C_c^\infty(\cO)$, $t\in\cD$, $\tilde\bbP$-a.s. {} In particular, for all $s,t\in\cD$ we have
\begin{equation*}
\begin{aligned}
&\la \eta(\tilde u(t)),\psi\ra-\la \eta(\tilde u(s)),\psi\ra =\int_s^t\la q(\tilde u(r)),\nabla \psi\ra\,dr \\
& -\la \tilde\mu_\eta, \chi_{[s,t)}\psi\ra  +\frac12\int_s^t \la {G}^2(\cdot,\tilde u(r))\eta''(\tilde u),\psi\ra\,dr+ \int_s^t\la \eta'(\tilde u) \Phi(\tilde u)\,d\tilde W,\psi\ra,
\end{aligned}
\end{equation*}
and so, for all $\theta\in C_c^\infty([0,T))$ we get
\begin{equation*}
\begin{aligned}
&\int_0^T\la \eta(\tilde u(t)),\theta_t\psi\ra\,dt +\la\eta(u_0),\theta(0)\psi\ra-\int_0^T\theta(t)\la q(\tilde u(t)),\nabla \psi\ra\,dt \\
&= \la \tilde\mu_\eta, \theta\psi\ra  -\frac12\int_0^T \la {G}^2(\cdot,\tilde u(r))\eta''(\tilde u),\theta\psi\ra\,dr- \int_0^T\la \eta'(\tilde u) \Phi(\tilde u)\,d\tilde W,\theta\psi\ra,
\end{aligned}
\end{equation*}
from which, by density, \eqref{e2.4} follows, with $\tilde\Om,\, \tilde W,\, \tilde \bbP$ instead of $\Om,\, W,\,  \bbP$.

The same argument just used for the martingales $M_\eta^n$, $\tilde M_\eta^n$, $\tilde M_\eta$, can be similarly applied to the martingales
\begin{align}
&N^n(t)= \la u_n(t),\psi\ra -\la u_0^n,\psi\ra -\int_0^t\la \Abf(u_n(s)),\nabla \psi\ra\,ds  \label{eN.1}\\
&\qquad\qquad\qquad+\ve_n\int_0^t\la\nabla u_n(s), \nabla\psi\ra\,ds,\nonumber \\
&\tilde N^n(t)= \la \tilde u_n(t),\psi\ra -\la \tilde u_0^n,\psi\ra -\int_0^t\la \Abf(\tilde u_n(s)),\nabla \psi\ra\,ds \label{eN.2}\\
&\qquad\qquad\qquad+\ve_n\int_0^t\la\nabla \tilde u_n(s),\nabla\psi\ra\,ds, \nonumber \\
&\tilde N(t)= \la \tilde u(t),\psi\ra -\la \tilde u_0,\psi\ra -\int_0^t\la \Abf(\tilde u(s)),\nabla \psi\ra\,ds, \label{eN.3}
\end{align}
where now $\psi\in C^\infty( \R^d)$ and $\la\cdot,\cdot\ra$ keeps denoting the inner product in $L^2(\cO)$, which then leads us to
$$
\tilde N(t)=\int_0^t \la \Phi(\tilde u(s))\,d\tilde W(s), \psi\ra,
$$
for $t\in \cD$. From this, similarly to what was just done for $\tilde M_\eta$,  we arrive at \eqref{e2.3}  This concludes the proof.

 \end{proof}

\begin{proof}[Conclusion of the existence part of Theorem~\ref{T:1.1}]  The conclusion of the proof of the existence part of Theorem~\ref{T:1.1}  follows the same lines in subsection~4.5 of \cite{Ha}, that is,
we apply the Gy\"ongy and Krylov's criterion for convergence in probability \cite{GK}. The latter states that a sequence $u^n$ of random variables assuming values in a complete metric space $X$ converges in probability
if and only if given any pair of subsequences $(u^{n_k}, u^{m_k})$ the  corresponding sequence of joint laws $\{\mu_{n_k,m_k}\}$, by passing to a subsequence if necessary, converges weakly to a probability
measure $\mu$ satisfying $\mu\left((x,y)\in X\X X;\, x=y\right)=1$. The way to use this criterion is to proceed as above but using a pair of subsequences instead of just  a single subsequence. So, one proves the tightness
of the joint laws of a pair of subsequences, then one proves that each of them converge to a martingale weak entropy solution of \eqref{e1.1}--\eqref{e1.3} for the same probability space $(\tilde\Om,\tilde \bbP)$ and the same
Wiener process $\tilde W$. Then we use the equivalence between weak entropy and kinetic solutions and the uniqueness of kinetic solutions to conclude that the joint laws converge to a measure concentrated on
the diagonal of the cartesian product $\Xal_u\X\Xal_u$.  Hence, the sequence converges in probability, which implies the convergence a.e.\ in $\Om\X[0,T]\X\cO$ and the limit is a kinetic solution of \eqref{e1.1}--\eqref{e1.3}.

\end{proof}

 \appendix
\section{ On convolutions of semigroups in Hilbert spaces}\label{S:7}

First of all, let us recall the well known spectral theorem in its multiplicative operator form, whose statement, exactly as given in  \cite{RS}, we recall here.

\begin{proposition} \label{spectralthm}
Let $A$ be a self-adjoint operator on a separable Hilbert space $H$ with domain $D(A)$. Then there is a measure space $(M, \mu)$ with $\mu$ a finite measure, a unitary operator $U: H \rightarrow L^2(M, d\mu)$, and a real-valued function $f$ on $M$ which is finite a.e.\  so that
\begin{enumerate}
  \item  $\psi \in D(A)$ if and only if $ f( \> \cdot \>) (U \psi) (\> \cdot \>) \in L^2(M, d\mu) $;
  \item If $\varphi \in U(D(A))$, then $(U A U^{-1} \varphi)(m) = f(m) \varphi(m)$.
\end{enumerate}
\end{proposition}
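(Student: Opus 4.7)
The plan is to follow the classical strategy, reducing first to the bounded case and then to a cyclic vector. Since $A$ is self-adjoint and possibly unbounded, I would first pass to its Cayley transform $U=(A-iI)(A+iI)^{-1}$, which is a unitary operator on $H$ with spectrum contained in the unit circle and with $1\notin\sigma(U)$ whenever $A$ is densely defined. This reduces the problem to producing a multiplication-operator model for a unitary (equivalently, a bounded normal) operator, at which point one can recover the model for $A$ via the inverse Cayley transform $A = i(I+U)(I-U)^{-1}$ on the appropriate dense domain. Alternatively, one can work directly with the bounded continuous functional calculus for $A$ built from the resolvents $R_z(A)=(A-zI)^{-1}$, $z\in\C\setminus\R$, and the Stone--Weierstrass theorem on $\sigma(A)\subset\R$.

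Second, I would construct the continuous functional calculus. Using the resolvent operators and the Stone--Weierstrass theorem, one obtains an isometric $\ast$-homomorphism $\Phi: C_0(\sigma(A))\to B(H)$ with $\Phi(\operatorname{id})\supset A$ in the appropriate sense. For each $\psi\in H$, the functional $f\mapsto\la\psi,\Phi(f)\psi\ra$ is a positive linear functional on $C_0(\sigma(A))$, so the Riesz--Markov representation theorem furnishes a finite positive Borel measure $\mu_\psi$ on $\sigma(A)$ such that $\la\psi,\Phi(f)\psi\ra=\int f\,d\mu_\psi$.

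Third, I would decompose $H$ into an orthogonal direct sum of cyclic subspaces. Since $H$ is separable, Zorn's lemma (or a straightforward inductive construction exhausting a countable dense set) yields a countable family $\{\psi_n\}$ of mutually orthogonal unit vectors whose cyclic subspaces $H_n=\overline{\operatorname{span}}\{\Phi(f)\psi_n:f\in C_0(\sigma(A))\}$ are pairwise orthogonal and satisfy $H=\bigoplus_n H_n$. On each $H_n$ one defines $U_n:H_n\to L^2(\sigma(A),d\mu_{\psi_n})$ by $U_n(\Phi(f)\psi_n)=f$ and extends by density; this $U_n$ is unitary and intertwines $A|_{H_n}$ with multiplication by the identity function.

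Finally, I would assemble the pieces. Form the disjoint union $M=\bigsqcup_n \sigma(A)\X\{n\}$ with the measure $\mu=\sum_n 2^{-n}\mu_{\psi_n}/(1+\mu_{\psi_n}(\sigma(A)))$ (rescaled to be finite), and let $f:M\to\R$ be the function $(λ,n)\mapsto\lambda$. The map $U=\bigoplus_n c_n U_n$, with appropriate scaling constants absorbing the rescaling of measures, provides the desired unitary $U:H\to L^2(M,d\mu)$, and assertions (1) and (2) follow from the corresponding properties on each summand. The main obstacle is setting up the bounded functional calculus with the correct domain assertions for the unbounded operator $A$; once this is in hand, the rest is essentially bookkeeping. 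Since the result is classical, in practice I would simply quote \cite{RS} rather than reproduce the details.
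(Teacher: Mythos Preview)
The paper does not prove this proposition at all: it is stated as a recalled result, with the explicit remark that the statement is taken ``exactly as given in \cite{RS}''. Your final sentence---that in practice you would simply quote \cite{RS}---is therefore precisely what the paper does, and is the appropriate choice here. The sketch you give of the classical argument (Cayley transform, continuous functional calculus via Riesz--Markov, cyclic decomposition, disjoint union with rescaled measures) is a standard and correct route to the result, but it is not needed for the present paper.
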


In the remainder of this section, we will preserve the notations and assumptions of this spectral theorem. Also we will assume that the operator $A$ is {\em nonnegative}, which allows us to characterize its generated semigroup $S(t) = \exp - t A$ by means of the operational calculus simply as
$$
(U  \exp \{ -t A \} \psi)(m) = \exp\{ - t f(m) \} (U\psi)(m).
$$
Furthermore, it allows us to characterize the spaces
$$
H_A^\alpha := D(A^{\alpha}) = U^{-1} (L^2(M, (1+f(m))^{2\alpha}\, d\mu)) =: U^{-1} (V_f^\alpha),
$$
for $\alpha \geq 0$. For $\alpha < 0$, we set $H_A^\alpha = (H_A^{-\alpha})^*$, which may still naturally be identified with $V_f^\alpha = L^2(M, (1+f(m))^{2\alpha} d\mu)$. Of course, then $(I+A)^{\beta}$ defines a linear isometry between $H_A^\alpha$ and $H_A^{\alpha - \beta}$. Observe that $S(t)$ is still a contraction semi-group on the spaces $H_A^\alpha$.

Let $T > 0$. For any $-\infty < \alpha < \infty$, we may define the Duhamel convolution operator
$$
(\cI h)(t) = \int_0^t S(t-s) h(s) ds
$$
for $h \in L^2(0,T; H_A^\alpha)$. Clearly, if $h\in C([0,T]; H_A^\a)$,
\begin{align}
&\sup_{t\in[0,T]} \Vert \cI h(t)\Vert_{H_A^\a}^2\le T^2\sup_{t\in[0,T]} \|h(t)\|_{H_A^\a}^2, \label{e6.1}
\end{align}

Also clearly, one has that
\begin{equation}
\int_0^T \Vert \cI h(s) \Vert_{H_A^\alpha}^2 ds \leq \frac{T^2}{2} \int_0^T \Vert h(s) \Vert_{H_A^\alpha}^2 ds. \label{regduhamel0}
\end{equation}
However, one may say more regarding the regularization provided by the operator $\cI$.

\begin{proposition} \label{reg1}
  In the notations above, $\cI$ maps $L^2(0,T;H_A^\alpha)$ into $L^2(0,T;H_A^{\alpha+1})$ and
  \begin{equation}
    \int_0^T \Vert \cI h(s) \Vert_{H_A^{\alpha+1}}^2 ds \leq C \int_0^T \Vert h(s) \Vert_{H_A^\alpha}^2 ds, \label{regduhamel}
  \end{equation}
  for some absolute constant $C$ depending only on $T$.
\end{proposition}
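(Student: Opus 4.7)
My plan is to reduce the proposition, via the spectral theorem (Proposition~\ref{spectralthm}), to a scalar estimate uniform in the spectral parameter. Since $A$ is a nonnegative self-adjoint operator, it is unitarily equivalent through $U$ to multiplication by a nonnegative real-valued function $f$ on some $L^2(M,d\mu)$. Under this identification, $S(t) = \exp(-tA)$ becomes multiplication by $e^{-tf(m)}$, and $(I+A)^\beta$ is multiplication by $(1+f(m))^\beta$, which identifies $H_A^\alpha$ with $V_f^\alpha = L^2(M,(1+f(m))^{2\alpha}d\mu)$. Crucially, the operator $\cI$ acts on $L^2(0,T;V_f^\alpha)$ simply by the pointwise-in-$m$ scalar convolution $(\cI h)(t,m) = \int_0^t e^{-(t-s)f(m)} h(s,m)\,ds$, and multiplication by $(1+f)^\alpha$ commutes with $\cI$.

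Setting $\tlp(s,m) = (1+f(m))^\alpha (Uh)(s,m)$, the desired inequality (after applying Plancherel/unitarity) reduces to
\begin{equation*}
\int_0^T \!\! \int_M (1+f(m))^{2} \,|\cI \tlp(t,m)|^2\, d\mu \, dt \;\le\; C(T) \int_0^T \!\! \int_M |\tlp(s,m)|^2 \, d\mu \, ds.
\end{equation*}
By Fubini this further reduces to the following scalar estimate: for every $\lambda \ge 0$ and every $g \in L^2(0,T)$,
\begin{equation*}
(1+\lambda)^2 \int_0^T \left| \int_0^t e^{-(t-s)\lambda} g(s)\, ds \right|^2 dt \;\le\; C(T)\, \|g\|_{L^2(0,T)}^2,
\end{equation*}
with a constant independent of $\lambda$.

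To prove this scalar bound, I would apply Cauchy--Schwarz to split the integrand:
\begin{equation*}
\left| \int_0^t e^{-(t-s)\lambda} g(s)\, ds \right|^2 \le \left( \int_0^t e^{-(t-s)\lambda} ds \right) \int_0^t e^{-(t-s)\lambda} |g(s)|^2 ds \le m(\lambda,t) \int_0^t e^{-(t-s)\lambda}|g(s)|^2 ds,
\end{equation*}
where $m(\lambda,t) = \min(t, 1/\lambda) \le \min(T,1/\lambda)$ (with $1/\lambda = \infty$ when $\lambda=0$). Integrating in $t$ and swapping the order of integration yields
\begin{equation*}
\int_0^T \left| \int_0^t e^{-(t-s)\lambda} g(s)\, ds \right|^2 dt \;\le\; \min(T,1/\lambda)^2 \, \|g\|_{L^2(0,T)}^2.
\end{equation*}
Multiplying by $(1+\lambda)^2$ and splitting the cases $\lambda \le 1/T$ and $\lambda \ge 1/T$ gives $(1+\lambda)^2 \min(T,1/\lambda)^2 \le (1+T)^2$ in both ranges, which supplies the scalar bound with $C(T) = (1+T)^2$. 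Substituting back, integrating over $M$, and applying Fubini completes the proof.

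I do not anticipate a genuine obstacle: the only place requiring care is uniformity of the scalar bound in $\lambda$, which is precisely what the case split $\lambda \lessgtr 1/T$ handles. The argument is essentially a self-contained replacement of the standard $H^{\infty}$-calculus / analytic-semigroup maximal-regularity argument, made elementary by the availability of the spectral representation.
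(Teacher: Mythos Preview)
Your proof is correct and follows essentially the same approach as the paper: reduce to $\alpha=0$ via the spectral representation, then apply Cauchy--Schwarz and Fubini to the scalar kernel $e^{-(t-s)\lambda}$. The only cosmetic difference is that the paper uses the weight $\lambda e^{-(t-s)\lambda}$ in Cauchy--Schwarz (so that $\int_0^t \lambda e^{-r\lambda}\,dr \le 1$ directly, avoiding your case split $\lambda \lessgtr 1/T$), but the underlying mechanism is identical.
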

\begin{proof}
  By the remarks above,  we see that it suffices to analyze the case $\alpha = 0$. Since
  $$
  \int_0^T \|\cI h(s)\|_{H_A^1}^2ds \leq 2  \Big( \int_0^T \|\cI h(s)\|_{H}^2ds + \int_0^T \|A(\cI h)(s)\|_{H}^2ds \Big),
  $$
  and the first term was already estimated in \eqref{regduhamel0},  we may concentrate ourselves only on estimating the second one.
  Applying the spectral theorem, Proposition~\ref{spectralthm}, its operational calculus and Cauchy-Schwarz   inequality, we get
  \begin{align*}
  \int_0^T\Vert A (\cI h)(t)& \Vert_{H}^2 dt \\
  &= \int_0^T \int_M f(m)^2\Big| \int_0^t \exp\{- (t-s) f(m)\} \big(U h(s)\big)(m)\, ds \Big|^2\, d\mu(m)\,dt \\
  &=  \int_0^T\int_M \Big| \int_0^t \exp\{- (t-s) f(m)\} f(m) \big(U h(s)\big)(m)\,ds \Big|^2 \,d\mu(m)\,dt \\
  &\leq \int_0^T  \int_M \int_0^t  \exp\{- (t-s) f(m)\}  f(m) \big| \big(U h(s)\big)(m) \big|^2\, ds\, d\mu(m)\,dt.
  \end{align*}
  For $h \in L^2(0,T;H)$, $Uh \in L^2(0,T; M)$, and thus Tonelli's theorem yields
  \begin{align*}
 \int_0^T\Vert A (\cI h)(t) &\Vert_{H}^2 dt \\ &\leq \int_M \int_0^T \int_0^t  \exp\{- (t-s) f(m)\}  f(m) \big| \big(U h(s)\big)(m) \big|^2 ds dt\, d\mu(m) \\
  &= \int_M \int_0^T \int_s^T  \exp\{- (t-s) f(m)\}  f(m) \big| \big(U h(s)\big)(m) \big|^2 dt\, ds d\mu(m) \\
  &\leq \int_M \int_0^T \big| \big(U h(s)\big)(m) \big|^2 \,ds\, d\mu(m) \\
  &=\Vert h \Vert_{L^2(0,T; H)}^2.
  \end{align*}
  This shows the validity of the inequality.
\end{proof}

Our second inequality will be for {\em stochastic convolutions}. So, let us first fix some notations and additional hypothesis.

 As in Section~\ref{S:1},  let  $(\Omega, \mathscr{F}, (\mathscr{F})_{t\geq 0}, \mathbb{P})$ be a stochastic basis with a complete and right-continuous filtration. Moreover, let $\mathcal{P}$ be the predictable $\sigma-$algebra on $\Omega \times [0,T]$ associated to $(\mathscr{F}_t)_{t\geq 0}$ and $W$  be a cylindrical Wiener process, i.e.,
$$
W(t) = \sum_{k=1}^\infty \beta_k(t) e_k
$$
where the $\beta_k$'s are mutually independent real-valued standard Wiener processes relative to $(\mathscr{F}_t)_{t\geq 0}$, and $(e_k)$ is an orthonormal basis of another separable Hilbert space $\mathfrak{U}$.

Let $T>0$ and $-\infty  < \alpha < \infty$. Under these conditions, we may introduce the stochastic Duhamel operator
$$
(\cI_W \Psi)(t) = \int_0^t S(t-s) \Psi(s) \, dW(s)
$$
for predictable processes $\Psi \in L^2(\Omega \times [0,T];L_2(\mathfrak{U};H_A^\alpha))$. Concerning properties of $\cI_W$, we have the following result.

\begin{proposition} \label{reg1,5}
  In the notations above, $\cI_W$ maps $L^2(\Omega \times [0,T];L_2(\mathfrak{U};H_A^\alpha))$ into $L^2(\Omega \times [0,T]; H_A^{\alpha+1/2})$ and
\begin{equation}\label{regduhamel1,5}
\Vert \mathcal{I}_W  \Psi \Vert_{L^2(\Omega \times [0,T]; H_A^{\alpha+1/2})} \leq C \Vert \Psi \Vert_{L^2(\Omega;L^2(0,T;L_2(\mathfrak{U};H_A^\alpha))},
\end{equation}
for some $C>0$ depending only on $T$.
\end{proposition}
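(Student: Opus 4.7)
The plan is to mirror the proof of Proposition~\ref{reg1}, the main change being the use of the It\^o isometry in place of Cauchy--Schwarz; this substitution is what produces only half a derivative of smoothing instead of the full one available deterministically. First I would reduce the statement to the case $\alpha = 0$. Since $(I+A)^\alpha$ acts as multiplication by $(1+f(m))^\alpha$ under the unitary $U$ of Proposition~\ref{spectralthm}, it commutes with $S(t)$ on smooth elements and, being closed, may be passed through the stochastic integral, giving $(I+A)^\alpha \cI_W \Psi = \cI_W((I+A)^\alpha \Psi)$. As $(I+A)^\alpha$ is an isometry between $H_A^\beta$ and $H_A^{\beta-\alpha}$, everything reduces to showing
\begin{equation*}
\bbE \int_0^T \Vert \cI_W \Psi(t) \Vert_{H_A^{1/2}}^2 \, dt \leq C(T)\, \bbE \int_0^T \Vert \Psi(s) \Vert_{L_2({\frak U}; H)}^2 \, ds
\end{equation*}
for predictable $\Psi \in L^2(\Omega \times [0,T]; L_2({\frak U}; H))$.

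Next I would apply the It\^o isometry and diagonalize $A$ by the spectral theorem, exactly as in the deterministic argument. Writing $g_k(s,m) = (U(\Psi(s) e_k))(m)$, It\^o isometry gives
\begin{equation*}
\bbE \Vert \cI_W \Psi(t) \Vert_{H_A^{1/2}}^2 = \bbE \sum_{k \geq 1} \int_0^t \int_M (1+f(m))\, e^{-2(t-s)f(m)} |g_k(s,m)|^2 \, d\mu(m)\, ds.
\end{equation*}
Integrating in $t \in [0,T]$ and applying Tonelli, the problem is reduced to controlling the scalar kernel $\int_s^T (1+f(m))\, e^{-2(t-s)f(m)} \, dt$ uniformly in $m$ and $s$.

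The crux of the proof is this uniform bound. Splitting the integrand into the $1$-part and the $f(m)$-part: the first contributes at most $T-s \leq T$ (since $e^{-2\tau f(m)} \leq 1$), while the second contributes
\begin{equation*}
\int_s^T f(m)\, e^{-2(t-s)f(m)} \, dt = \frac{1 - e^{-2(T-s)f(m)}}{2} \leq \frac{1}{2},
\end{equation*}
interpreted as $0$ when $f(m)=0$. This yields a uniform bound $T + 1/2$, and the estimate follows with $C(T) = T + 1/2$.

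The main obstacle, as anticipated, is not any single calculation but rather the structural fact that the It\^o isometry places the square inside the time integral, so one only gains $f(m)^{1/2}$ of smoothing rather than the full $f(m)$ obtained in Proposition~\ref{reg1}; correspondingly, the weight $1+f(m)$ (rather than $(1+f(m))^2$) is exactly what can be absorbed by $\int_0^T e^{-2\tau f(m)} d\tau$ uniformly. A minor technical point to address carefully is the reduction step for $\alpha < 0$, where $H_A^\alpha$ is interpreted as $(H_A^{-\alpha})^*$; here one argues by density from smooth predictable $\Psi$, using that $(I+A)^\alpha \cI_W \Psi = \cI_W((I+A)^\alpha \Psi)$ on the dense subspace extends by isometry to all of $L^2(\Omega\X[0,T]; L_2({\frak U}; H_A^\alpha))$.
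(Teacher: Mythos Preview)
Your proof is correct and follows essentially the same approach as the paper: apply the It\^o isometry, diagonalize via the spectral theorem, swap the order of integration by Tonelli, and bound the kernel $\int_s^T (1+f(m))\,e^{-2(t-s)f(m)}\,dt$ uniformly. The only cosmetic difference is that you first reduce to $\alpha=0$ via the isometry $(I+A)^\alpha$, whereas the paper carries the weight $(1+f(m))^{2\alpha+1}$ throughout and factors it as $(1+f(m))(1+f(m))^{2\alpha}$ at the end; both are equivalent.
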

\begin{proof}
  The verification of \eqref{regduhamel2}  is similar to that of \eqref{regduhamel}, but here we have to use also It\^o isometry. For this reason, the smoothing effect is weaker. Writing $\Psi(\omega, t) e_k = \psi_k(\omega, t)$, by  It\^o isometry and the spectral theorem, Proposition~\ref{spectralthm}, we have
  \begin{align*}
  \mathbb{E} &\int_0^T  \Vert \cI_W \Psi(t) \Vert_{H_A^{\alpha+1/2}}^2 \\ &=  \mathbb{E} \int_0^T \Bigg\Vert \int_0^t S(t-s) \Psi(s)\, dW(s) \Bigg\Vert_{H_A^{\alpha+1/2}}^2\,dt \\
  &= \mathbb{E} \int_0^T \int_0^t \big\Vert S(t-s) \Psi(s) \big\Vert_{L_2(\mathfrak{U}; H_A^{\alpha+1/2})}^2\,ds\,dt \\
  &= \sum_{k=1}^\infty \mathbb{E} \int_0^T\int_0^t  \big\Vert S(t-s) \psi_k(s) \big\Vert_{H_A^{\alpha+1/2}}^2\, ds\, dt \\
  &= \sum_{k=1}^\infty \mathbb{E} \int_0^T  \int_0^t \int_M e^{-2(t-s)f(m)} (1 + f(m))^{2\alpha +1} |(U \psi_k(s))(m)|^2 \,d\mu(m)\, ds\, dt  \\
  &= \sum_{k=1}^\infty \mathbb{E}\int_M \int_0^T  \int_s^T e^{-2f(m)(t-s)} (1+f(m))  \\
  &\>\>\>\>\>\>\>\>\>\>\>\>\>\>\>\>\>\>\>\>\>\>\>\>\>\>\>\>\>\>\>\>\>\>\>\>\>\>\>\quad\quad\quad (1 + f(m))^{2\alpha} |(U \psi_k(s))(m)|^2 \,dt\, ds\, d\mu(m) \\
  &\leq C \sum_{k=1}^\infty \mathbb{E} \int_M\int_0^T (1 + f(m))^{2\alpha} |(U \psi_k(s))(m)|^2 \,ds\,d\mu(m) \\
  &\leq C \, \mathbb{E} \int_0^T \Vert \Psi(s) \Vert_{L_2(\mathfrak{U};H_A^\alpha)}^2 ds,
  \end{align*}
  hence the proposition.
\end{proof}

In this paper, we use this proposition as follows. Let $\Phi: H \to L(\mathfrak{U}, H)$ be continuous. If, for any $k \in \N$, $g_k : H \rightarrow H$ is given by $\Phi(h) e_k = g_k(h)$, assume that each $g_k : H_A^{1/2} \rightarrow H_A^{1/2}$ is continuous and that there exist constants $\gamma_k > 0$, such that
\begin{align*}
  \Vert g_k(h) \Vert_{H} &\leq \gamma_k (1 + \Vert h \Vert_{H} ), \\
  \Vert A^{1/2} g_k(h) \Vert_{H} &\leq \gamma_k (1 + \Vert A^{1/2} h \Vert_{H} ), \text{and } \\
  \sum_{k=1}^\infty \gamma_k^2 &= D < \infty.
\end{align*}

\begin{proposition} \label{reg2}
  Under the hypothesis above, if $u \in L^2(\Omega \times [0,T]; H_A^{1/2})$ is predictable, then $$I_W \Phi(u) \in L^2\big(\Omega;L^2(0,T;H_A^{1})\big),$$ and
  \begin{equation}\label{regduhamel2}
    \Vert \mathcal{I}_W \Phi(u) \Vert_{L^2(\Om \X [0,T]; H_A^{1})} \leq C \, ( 1 +  \Vert u \Vert_{L^2(\Omega \times [0,T]; H_A^{1/2})} ),
  \end{equation}
  where $C$ only depends on $T$ and $D$.
\end{proposition}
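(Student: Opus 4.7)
The plan is to reduce the statement to a direct application of Proposition~\ref{reg1,5} with $\alpha = 1/2$. To this end, the main task is to verify that $\Phi(u)$ belongs to $L^2(\Omega\times[0,T]; L_2(\mathfrak{U}; H_A^{1/2}))$ as a predictable process, together with the quantitative bound
\[
\bbE \int_0^T \|\Phi(u(s))\|_{L_2(\mathfrak{U}; H_A^{1/2})}^2\, ds \leq C\bigl(1 + \|u\|_{L^2(\Omega\times[0,T]; H_A^{1/2})}^2\bigr),
\]
with $C$ depending only on $T$ and $D = \sum_k \gamma_k^2$. Once this is in place, invoking Proposition~\ref{reg1,5} with $\alpha = 1/2$ gives $\mathcal{I}_W \Phi(u) \in L^2(\Omega\times[0,T]; H_A^{1})$ with the desired estimate.

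The key step is thus the coefficient-wise estimate in $H_A^{1/2}$. Using the equivalence of norms $\|h\|_{H_A^{1/2}}^2 \sim \|h\|_H^2 + \|A^{1/2} h\|_H^2$ (valid because $A$ is nonnegative self-adjoint, via the operational calculus applied to the multiplier $(1+f(m))$), the hypotheses on $g_k$ immediately give
\[
\|g_k(h)\|_{H_A^{1/2}}^2 \leq C\gamma_k^2 \bigl(1 + \|h\|_{H_A^{1/2}}^2\bigr),
\]
for all $h \in H_A^{1/2}$ and some absolute constant $C$. Summing in $k$ and using $\sum_k \gamma_k^2 = D < \infty$ yields
\[
\|\Phi(h)\|_{L_2(\mathfrak{U}; H_A^{1/2})}^2 = \sum_{k=1}^\infty \|g_k(h)\|_{H_A^{1/2}}^2 \leq CD\bigl(1 + \|h\|_{H_A^{1/2}}^2\bigr).
\]
Substituting $h = u(s)$ and integrating over $\Omega\times[0,T]$ delivers the required bound.

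It remains to observe that the map $s \mapsto \Phi(u(s))$ is predictable with values in $L_2(\mathfrak{U}; H_A^{1/2})$. This follows from the predictability of $u$ as a process in $H_A^{1/2}$ together with the assumed continuity of each $g_k: H_A^{1/2} \to H_A^{1/2}$, noting that continuity of the partial sums $\sum_{k\leq N} \langle g_k(\cdot), \cdot\rangle_{H_A^{1/2}} e_k$ combined with the uniform-in-$N$ bound by the summable $\gamma_k^2$ allows passage to the limit in the Hilbert--Schmidt norm. I do not expect any serious obstacle here; the only mild care needed is the equivalence of $H_A^{1/2}$-norms and ensuring the sum in $k$ is handled before invoking Proposition~\ref{reg1,5}, which otherwise does all the actual regularizing work.
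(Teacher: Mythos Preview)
Your proposal is correct and follows essentially the same route as the paper: verify that $\Phi(u)\in L^2(\Omega\times[0,T];L_2(\mathfrak{U};H_A^{1/2}))$ via the norm equivalence $\|h\|_{H_A^{1/2}}^2 \sim \|h\|_H^2+\|A^{1/2}h\|_H^2$ and the assumed bounds on $g_k$, then apply Proposition~\ref{reg1,5} with $\alpha=1/2$. The paper's own proof is slightly terser on predictability, but the argument is otherwise identical.
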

\begin{proof}
We just need to verify that $\Psi = \Phi(u)$ is as in the statement of Proposition \ref{reg1,5} with $\alpha = 1/2$. Since $\Vert h \Vert_{H_A^{1/2}}^2 \leq C(\Vert h \Vert_H^2 + \Vert A^{1/2} h \Vert_H^2),$ we have that
\begin{align*}
  \Vert \Phi(u) &\Vert_{L^2(\Omega;L^2(0,T;L_2(\mathfrak{U};H_A^{1/2}))}^2 \\ &\leq C\big(\Vert \Phi(u) \Vert_{L^2(\Omega;L^2(0,T;L_2(\mathfrak{U};H))}^2 + \Vert A^{1/2} \Phi(u) \Vert_{L^2(\Omega;L^2(0,T;L_2(\mathfrak{U};H))}^2 \big) \\
  &= C \sum_{k=1}^\infty \big( \Vert g_k(u) \Vert_{L^2(\Omega;L^2(0,T;H))}^2 + \Vert A^{1/2} g_k(u) \Vert_{L^2(\Omega;L^2(0,T;H))}^2\big) \\
  &\leq C  \sum_{k=1}^\infty \gamma_k^2\,\big(1 + \Vert u \Vert_{L^2(\Omega;L^2(0,T;H))}^2 + \Vert A^{1/2} u \Vert_{L^2(\Omega;L^2(0,T;H))}^2 \big) \\
  &\leq C\, D\, \big( 1 + \Vert u \Vert_{L^2(\Omega;L^2(0,T;H_A^{1/2}))}^2 \big).
\end{align*}
Therefore, as the argument above also shows the predicability of $\Phi(u)$, the desired result is now a direct consequence of Proposition \ref{reg1,5}.
\end{proof}

Now, in order to apply the above abstract theory, let us fix
$$
H =L^2(\mathcal{O}),
$$
and, denoting as usual $H^k(\cO)$ the $k$-th order Sobolev space,
\begin{equation}\label{defA}
\begin{aligned}
D(A) &= \Big\{ u \in H^2(\mathcal{O}) ; \frac{\partial u}{\partial \nu} = 0 \text{ on } \partial \mathcal{O} \text{ (in the sense of traces in $H^1(\cO)$) } \Big\},\\
 Au &= -\Delta u,\quad  \text{for $u \in D(A)$}.
\end{aligned}
\end{equation}

The proof of following result is  somewhat standard and  so we omit it.

\begin{proposition}\label{P:5.1} The operator $A: D(A)\to H$ defined in \eqref{defA} is self-adjoint. Moreover, we have
\begin{equation}
  H_A^{1/2}=D((I+ A)^{1/2})= H^1(\mathcal{O}). \label{Dmeio}
\end{equation}
\end{proposition}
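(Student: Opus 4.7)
The plan is to construct $A$ and verify both properties via the form method, which yields self-adjointness and the identification of $D((I+A)^{1/2})$ in one stroke. I would first introduce the symmetric bilinear form
\begin{equation*}
a(u,v) = \int_{\mathcal{O}} \nabla u \cdot \nabla v \, dx + \int_{\mathcal{O}} u v \, dx, \qquad u,v \in H^1(\mathcal{O}),
\end{equation*}
which is densely defined on $L^2(\mathcal{O})$, symmetric, and closed (since $a(u,u) = \|u\|_{H^1}^2$ and $H^1(\mathcal{O})$ is complete). Moreover, $a(u,u) \geq \|u\|_{L^2}^2$.

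Next, I would invoke the representation theorem for closed non-negative symmetric forms (see, e.g., Kato or Reed--Simon vol.\ I) to obtain a unique self-adjoint operator $T$ on $L^2(\mathcal{O})$, with $D(T) \subset H^1(\mathcal{O})$, satisfying $(Tu, v)_{L^2} = a(u,v)$ for every $u \in D(T)$ and $v \in H^1(\mathcal{O})$. Since $T \geq I$, one automatically has, via the operational calculus, that $D(T^{1/2}) = H^1(\mathcal{O})$ with $\|T^{1/2}u\|_{L^2}^2 = a(u,u) = \|u\|_{H^1}^2$.

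The main step is then the identification $T = I + A$, where $A$ is the Neumann Laplacian defined by \eqref{defA}. For $u \in D(T)$, writing $f = Tu$, testing against $v \in C_c^\infty(\mathcal{O})$ in $a(u,v)=(f,v)$ yields $-\Delta u + u = f$ in $\mathcal{D}'(\mathcal{O})$, so $\Delta u \in L^2(\mathcal{O})$. By classical elliptic regularity for the Neumann problem on the smooth bounded domain $\mathcal{O}$ (e.g.\ Grisvard), we then upgrade to $u \in H^2(\mathcal{O})$. Integrating by parts in the identity $a(u,v) = (f,v)$ for arbitrary $v \in H^1(\mathcal{O})$ gives
\begin{equation*}
\int_{\partial \mathcal{O}} (\partial_\nu u)\, v \, d\mathcal{H}^{d-1} = 0 \qquad \forall v \in H^1(\mathcal{O}),
\end{equation*}
and the density of the trace image in $H^{1/2}(\partial \mathcal{O})$ forces $\partial_\nu u = 0$. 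Hence $u \in D(A)$ and $Tu = (I+A)u$. The converse inclusion $D(A) \subset D(T)$ is immediate by direct integration by parts. Consequently $T = I + A$ is self-adjoint, so $A$ is self-adjoint and non-negative, and $H_A^{1/2} = D((I+A)^{1/2}) = D(T^{1/2}) = H^1(\mathcal{O})$.

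The only non-routine ingredient is the elliptic regularity step that promotes the weak Neumann identity $a(u,v) = (f,v)$ to membership in $H^2(\mathcal{O})$ together with the pointwise (trace-sense) vanishing of $\partial_\nu u$; this is where the smoothness hypothesis on $\partial \mathcal{O}$ is used. All other steps are essentially bookkeeping within the form-method framework.
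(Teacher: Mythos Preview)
Your argument is correct; the paper itself omits this proof as standard, and the form-method route you take (Kato's representation theorem applied to the closed symmetric form $a(u,v)=(\nabla u,\nabla v)_{L^2}+(u,v)_{L^2}$ on $H^1(\mathcal{O})$, followed by elliptic Neumann regularity on the smooth domain to identify the operator domain with $D(A)$) is precisely the canonical way to fill it in. The only remark is that the elliptic regularity step already delivers both $u\in H^2(\mathcal{O})$ and $\partial_\nu u=0$ at once from the weak Neumann identity $a(u,v)=(f,v)$ for all $v\in H^1(\mathcal{O})$, so the separate integration-by-parts step to read off the boundary condition is redundant, though harmless.
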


\end{document}